\theoremstyle{plain}
\newtheorem{theorem}{\bf Theorem}[section]
\newtheorem{proposition}[theorem]{\bf Proposition}
\newtheorem{lemma}[theorem]{\bf Lemma}
\theoremstyle{definition}
\newtheorem{definition}[theorem]{\bf Definition}
\newcommand{\N}{\mathbb N}
\newcommand{\Z}{\mathbb Z}
\newcommand{\C}{\mathcal C}
\newcommand{\A}{\mathcal A}
\newcommand{\Dc}{\mathcal D}
\newcommand{\Fc}{\mathcal F}
\newcommand{\vp}{\mathsf v}
\newcommand{\ord}{\text{\rm ord}}
\newcommand{\supp}{\text{\rm supp}}
\newcommand{\Ker}{\text{\rm Ker}}
\newcommand{\Wo}{W_0^{(2)}}
\renewcommand{\t}{\, | \,}
\newcommand{\be}{\begin{equation}}
\newcommand{\ee}{\end{equation}}
\newcommand{\bnml}{\begin{multline}}
\newcommand{\enml}{\end{multline}}
\newcommand{\buml}{\begin{multline*}}
\newcommand{\euml}{\end{multline*}}
\newcommand{\ber}{\begin{eqnarray}}
\newcommand{\eer}{\end{eqnarray}}
\newcommand{\sig}{\widetilde{\sigma}}
\begin{document}

\title{Inverse zero-sum problems {III}}

\address{Center for Combinatorics, Nankai University, Tianjin
300071, P.R. China} \email{wdgao\_1963@yahoo.com.cn}

\address{Institut f\"ur Mathematik und Wissenschaftliches Rechnen \\
Karl-Franzens-Universit\"at Graz \\
Heinrichstra\ss e 36\\
8010 Graz, Austria} \email{alfred.geroldinger@uni-graz.at,
diambri@hotmail.com}

\author{Weidong Gao, Alfred Geroldinger, and David J. Grynkiewicz}


\subjclass[2000]{11P70, 11B50, 11B75}


\maketitle

\bigskip
\section{Introduction} \label{1}
\bigskip

We continue the investigations started in \cite{Ga-Ge-Sc07a, Sc08e}.
Let \ $G = C_n \oplus C_n$ \ with $n \ge 2$. We say that $G$ has
Property {\bf B} if every minimal zero-sum sequence $S$ over $G$ of
length $|S| = 2n-1$ contains an element with multiplicity $n-1$. The
aim of the present paper is to prove the following two results.

\medskip
{\bf Theorem.} {\it Let \ $G = C_{mn} \oplus C_{mn}$ \ with \ $m,\,
n \ge 3$ odd and $mn>9$. If both \ $C_m \oplus C_m$ \ and \ $C_n \oplus C_n$
\ have Property {\bf B}, then \ $G$ \ has Property {\bf B}.}

\medskip
{\bf Corollary.} {\it Let \ $G = C_{n_1} \oplus C_{n_2}$ \ with $1 <
n_1 \t n_2$, and suppose that, for every prime divisor $p$ of $n_1$,
the group \ $C_p \oplus C_p$ has Property {\bf B}. Then \ $C_{n_1}
\oplus C_{n_1}$ has Property {\bf B}, and a sequence $S$ over $G$ of
length $\mathsf D (G) = n_1+n_2-1$ is a minimal zero-sum sequence if
and only if it has one of the following two forms{\rm \,:}
\begin{itemize}
\medskip
\item \[
      S = e_j^{\ord (e_j)-1} \prod_{\nu=1}^{\ord (e_k)}
      (x_{\nu}e_j+e_k) \,, \qquad \text{where}
      \]
      $(e_1, e_2)$ is a basis of $G$ with $\ord (e_i) = n_i$ for $i \in \{1,2\}$,
      $\{j,k\} = \{1,2\}$, $x_1, \ldots, x_{\ord (e_k)}  \in
      [0, \ord (e_j)-1]$, and $x_1 + \ldots + x_{\ord (e_k)} \equiv 1
      \mod \ord (e_j)$.

\medskip
\item \[
      S = g_1^{sn_1 - 1} \prod_{\nu=1}^{n_2 + (1-s)n_1} ( -x_{\nu} g_1 +
      g_2) \,, \qquad \text{where}
      \]
      $\{g_1, g_2\}$ is a generating set of $G$ with $\ord (g_2) =
      n_2$, $x_1, \ldots, x_{n_2 + (1-s)n_1} \in [0, n_1-1]$, $x_1 + \ldots + x_{n_2 +
      (1-s)n_1} = n_1-1$, $s \in [1, n_2/n_1]$, and either $s=1$ or
      $n_1g_1 = n_2g_2$.
\end{itemize}
}

\medskip
Thus Property {\bf B} is multiplicative, and if \ $G = C_{n_1}
\oplus C_{n_2}$ \ with $1 < n_1 \t n_2$ is a group of rank two, and
for every prime divisor $p$ of $n_1$ the group \ $C_p \oplus C_p$
has Property {\bf B}, then the minimal zero-sum sequences of maximal
length over $G$ are explicitly characterized.

In Section \ref{2}, we fix our notation and gather the necessary
tools (apart from former work on Property {\bf B} and classical
addition theorems, we use a confirmed conjecture of  Y. ould
Hamidoune, see Theorem \ref{ham-result}). Section \ref{3} contains
some straightforward lemmas. The proof of the Theorem consists of
two major parts: the first is given in Section \ref{4} and the
second, more involved one, is given in Section \ref{5}.

The  Corollary is mainly based on the Theorem above, on former work
of the authors \cite{Ga-Ge03b}, and on recent work by Wolfgang A.
Schmid \cite{Sc08e}. Its proof only needs a few lines and is given
in Section \ref{6}.

\bigskip
\section{Preliminaries} \label{2}
\bigskip

Our notation and terminology are consistent with \cite{Ga-Ge-Sc07a}
and \cite{Ge-HK06a}. We briefly gather some key notions and fix the
notation concerning sequences over  abelian groups. Let $\mathbb N$
denote the set of positive integers and let $\mathbb N_0 = \mathbb N
\cup \{ 0 \}$. For real numbers $a, b \in \mathbb R$, we set $[a, b]
= \{ x \in \mathbb Z \mid a \le x \le b\}$.
Throughout, all abelian groups will be written additively. For $n
\in \mathbb N$, let $C_n$ denote a cyclic group with $n$ elements.
Let \ $G$ \ be an abelian group.

Let \ $A,\, B \subset G$ \ be nonempty subsets. Then \ $A+B = \{a+b
\mid a \in A, b \in B \}$ \ denotes their \ {\it sumset} \ and \
$A-B = \{a-b \mid a \in A, b \in B \}$ \ their \ {\it difference
set}. The \ {\it stabilizer} \ of $A$ is defined as \ $\text{\rm
Stab} (A) = \{ g \in G \mid g +A = A\}$, and $A$ is called \ {\it
periodic} \ if $\text{\rm Stab} (A) \ne \{0\}$.

An $s$-tuple $(e_1, \ldots, e_s)$ of elements of $G$ is said to be \
{\it independent} \ if $e_i \ne 0$ for all $i \in [1,s]$ and, for
every $s$-tuple $(m_1, \ldots, m_s) \in \mathbb Z^{s}$,
\[
m_1 e_1 + \ldots + m_s e_s  =0 \qquad \text{implies} \qquad m_1e_1 =
\ldots = m_se_s = 0 \,.
\]
An $s$-tuple $(e_1, \ldots, e_s)$ of elements of $G$ is called a \
{\it basis} \ if it is independent and $G = \langle e_1\rangle
\oplus \ldots \oplus \langle e_s \rangle$.

Let \ $G = C_n \oplus C_n$ \ with $n \ge 2$, and let \ $(e_1, e_2)$
\ be a basis of $G$. An endomorphism $\varphi \colon G \to G$ with
\[
\left( \varphi (e_1), \varphi (e_2) \right) = \left( e_1, e_2
\right) \cdot
            \left(
            \begin{matrix}
            a & b \\
            c & d
            \end{matrix}
            \right) \,,
           \qquad \text{where} \qquad a,b,c,d \in \Z \,,
\]
is an automorphism if and only if $(  \varphi (e_1), \varphi (e_2)
)$ is a basis, which is equivalent to $\gcd ( ad-bc, n ) = 1$. If
$f_1 \in G$ with $\ord (f_1) = n$, then clearly  there is an $f_2
\in G$ such that $(f_1, f_2)$ is a basis of $G$.

\smallskip
Let $\mathcal F(G)$ be the free  monoid with basis $G$. The elements
of $\mathcal F(G)$ are called \ {\it sequences} \ over $G$. We write
sequences $S \in \mathcal F (G)$ in the form
\[
S =  \prod_{g \in G} g^{\mathsf v_g (S)}\,, \quad \text{with} \quad
\mathsf v_g (S) \in \mathbb N_0 \quad \text{for all} \quad g \in G
\,.
\]
We call \ $\mathsf v_g (S)$  the \ {\it multiplicity} \ of $g$ in
$S$, and we say that $S$ \ {\it contains} \ $g$ \ if \ $\mathsf v_g
(S) > 0$.  A sequence $S_1 $ is called a \ {\it subsequence} \ of
$S$ \ if \ $S_1 \, | \, S$ \ in $\mathcal F (G)$ \ (equivalently, \
$\mathsf v_g (S_1) \le \mathsf v_g (S)$ \ for all $g \in G$). Given two sequences $S,\,T\in
\Fc(G)$, we denote by $\gcd(S,T)$ the longest subsequence dividing
both $S$ and $T$. If a sequence $S \in \mathcal F(G)$ is written in
the form $S = g_1 \cdot \ldots \cdot g_l$, we tacitly assume that $l
\in \mathbb N_0$ and $g_1, \ldots, g_l \in G$.

\smallskip

For a sequence
\[
S \ = \ g_1 \cdot \ldots \cdot g_l \ = \  \prod_{g \in G} g^{\mathsf
v_g (S)} \ \in \mathcal F(G) \,,
\]
we call
\[
|S| = l = \sum_{g \in G} \mathsf v_g (S) \in \mathbb N_0 \qquad
\text{the \ {\it length} \ of \ $S$}\,,
\]
\[
\begin{split} \mathsf h (S) =  &   \max \{ \mathsf v_g (S) \mid g \in G \} \in [0, |S|]\\
  & \qquad  \text{the \ {\it maximum of the multiplicities} \ of \
$S$}\,,\end{split}
\]
\[
\supp (S) = \{g \in G \mid \mathsf v_g (S) > 0 \} \subset G \qquad
\text{the \ {\it support} \ of \ $S$}\,,
\]
\[
\sigma (S) = \sum_{i = 1}^l g_i = \sum_{g \in G} \mathsf v_g (S) g
\in G \qquad \text{the \ {\it sum} \ of \ $S$}\,,
\]
\[
\begin{split}
\Sigma_k (S)  = &  \Bigl\{ \sum_{i \in I} g_i  \Bigm|  I \subset [1,
l]
\ \text{with} \ |I| = k \,\Bigr\} \\
 &\qquad  \text{ the \ {\it set of
$k$-term subsums} \ of \ $S$}\,,  \ \text{for all} \ k \in \mathbb
N\,,\end{split}
\]
\[
\Sigma_{\le k} (S) = \bigcup_{j \in [1,k]} \Sigma_{j} (S) \,, \qquad
\Sigma_{\ge k} (S) = \bigcup_{j \ge k} \Sigma_{j} (S) \,,
\]
and
\[
\Sigma (S)  = \Sigma_{\ge 1} (S)  \ \text{ the \ {\it set of
$($all$)$ subsums} \ of \ $S$} \,.
\]
The sequence \ $S$ \ is called
\begin{itemize}
\item {\it zero-sum free} \ if \ $0 \notin \Sigma (S)$,

\item a {\it zero-sum sequence} \ if \ $\sigma (S) = 0$,

\item a {\it minimal zero-sum sequence} \ if $1 \ne S$, $\sigma(S)=0$,  and every $S'|S$ with $1\leq |S'|<|S|$  is
      zero-sum free.
\end{itemize}

\smallskip
We denote by \ $\mathcal A (G) \subset \mathcal F (G)$ \ the set of
all minimal zero-sum sequences over $G$. Every map of abelian groups
\ $\varphi \colon G \to H$ \ extends to a homomorphism \ $\varphi
\colon \mathcal F (G) \to \mathcal F (H)$ \ where \ $\varphi (S) =
\varphi (g_1) \cdot \ldots \cdot \varphi (g_l)$. We say that
$\varphi$ is \ {\it constant} \  on $S$ if $\varphi(g_1) = \ldots =
\varphi(g_l)$. If $\varphi$ is a homomorphism, then $\varphi (S)$ is
a zero-sum sequence if and only if $\sigma (S) \in \Ker (\varphi)$.

\medskip
\begin{definition} \label{2.1}
Let \ $G$ \ be a finite abelian group with exponent $n$.
\begin{enumerate}
\item Let \ $\mathsf D (G)$ \ denote the smallest integer $l \in \mathbb
      N$ such that every sequence \ $S \in \mathcal F (G)$ \ of length
      $|S| \ge l$ has a zero-sum subsequence. Equivalently, we have \ $\mathsf D (G) = \max \{ |S| \mid S \in
      \mathcal A (G) \}$), and  $\mathsf D
      (G)$ is called the \ {\it Davenport constant} \ of $G$.

\item Let \ $\eta (G)$ \ denote the smallest integer $l \in \mathbb
      N$ such that every sequence \ $S \in \mathcal F (G)$ \ of length
      $|S| \ge l$ has a zero-sum subsequence $T$ of length $|T| \in
      [1,n]$.

\smallskip
\item We say that \ $G$ \ has Property {\bf C} if every sequence \ $S$
      \ over $G$ of length
      $|S| = \eta (G) - 1$, with no
      zero-sum subsequence of length in $[1, n]$, has the form \ $S = T^{n-1}$ \ for some sequence \ $T$ \ over $G$.
\end{enumerate}
\end{definition}

\medskip
\begin{lemma} \label{2.2}
Let \ $G = C_{n_1} \oplus C_{n_2}$ \ with \ $1 \le n_1 \t
      n_2$.
\begin{enumerate}
\item We have \ $\mathsf D (G) = n_1+n_2-1$ \ and \ $\eta (G) = 2n_1 + n_2 - 2$.

\smallskip
\item If \ $n_1 = n_2$ \ and \ $G$ \ has Property {\bf B}, then \ $G$ \ has Property {\bf
      C}.
\end{enumerate}
\end{lemma}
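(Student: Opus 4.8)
I would prove the two parts separately.

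For part~(1), both equalities are classical. For a basis $(e_1, e_2)$ of $G$ with $\ord(e_i) = n_i$, the sequence $e_1^{n_1 - 1} e_2^{n_2 - 1}$ is zero-sum free of length $n_1 + n_2 - 2$, which gives $\mathsf D(G) \ge n_1 + n_2 - 1$; the reverse inequality (for $n_1 \mid n_2$) goes back to Olson for $p$-groups and is proved in general in \cite{Ge-HK06a}. Likewise $e_1^{n_1 - 1} e_2^{n_2 - 1} (e_1 + e_2)^{n_1 - 1}$ has no zero-sum subsequence of length $\le n_2$, so $\eta(G) \ge 2n_1 + n_2 - 2$, and the matching upper bound is known, see \cite{Ga-Ge-Sc07a, Ge-HK06a}. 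I would simply invoke these.

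For part~(2) write $n = n_1 = n_2$. If $n = 2$ the statement is trivial, since then $\eta(G) - 1 = 3$ and every sequence $S$ with $|S| = 3$ has the required form $T^{n-1} = T$; so assume $n \ge 3$ and let $S \in \mathcal F(G)$ with $|S| = \eta(G) - 1 = 3n - 3$ have no zero-sum subsequence of length in $[1, n]$. Then $0 \nmid S$ and $\mathsf h(S) \le n - 1$ (otherwise $g^n \mid S$). The main tool is an extension step: for every $x \in G$ the sequence $S \cdot x$ has length $\eta(G)$, hence contains a zero-sum subsequence $V$ with $|V| \in [1, n]$; since $S$ contains no such subsequence, $x \mid V$, and $x^{-1} V \mid S$ has sum $-x$ and length $\le n - 1$. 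Thus $\Sigma_{\le n - 1}(S) \cup \{0\} = G$, i.e.\ $\Sigma_{\le n - 1}(S) = G \setminus \{0\}$. Counting subsums now bounds the support from below: if $k = |\supp(S)|$, then $n^2 - 1 = |\Sigma_{\le n - 1}(S)| \le \binom{n - 1 + k}{k} - 1$, and since $\binom{n - 1 + k}{k} < n^2$ for $k \le 2$ and $n \ge 2$, we get $k \ge 3$. If $k = 3$, then the three multiplicities sum to $3n - 3$ and are each $\le n - 1$, hence all equal $n - 1$, so $S = T^{n-1}$ with $|T| = 3$, which is the assertion.

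It remains to rule out $k \ge 4$, and this is where Property~{\bf B} enters. The plan is first to show $\mathsf h(S) = n - 1$: one exhibits a minimal zero-sum subsequence $W \mid S$ of length $\mathsf D(G) = 2n - 1$ — any zero-sum subsequence of $S$ of length $\le 2n + 1$ is automatically minimal, since a factorization $Z = Z_1 Z_2$ into nonempty zero-sums would force $|Z_1|, |Z_2| \ge n + 1$ — and then Property~{\bf B} yields $g \in \supp(W) \subseteq \supp(S)$ with $\mathsf v_g(W) = n - 1$, whence $\mathsf h(S) = n - 1$ and $\ord(g) = n$. Writing $S = g^{n-1} S'$ with $g \nmid S'$, one checks $\supp(S') \cap \langle g \rangle = \emptyset$ (if $tg \mid S'$ with $2 \le t \le n - 1$, then $tg \cdot g^{n - t} \mid S$ is a zero-sum of length $n - t + 1 \le n - 1$), so the projection $\varphi \colon G \to \overline G = G/\langle g \rangle \cong C_n$ sends $S'$ to a sequence over $\overline G \setminus \{0\}$ of length $2n - 2 = 2(\mathsf D(\overline G) - 1)$. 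Analyzing $\varphi(S')$ with the structure theory of long zero-sum (and zero-sum free) sequences over a cyclic group — in particular, a zero-sum free sequence over $C_n$ of length $n - 1$ has the form $c^{n-1}$ with $\langle c \rangle = C_n$ — together with the restrictions that the absence of short zero-sums in $S$ places on preimages of zero-sum subsequences of $\varphi(S')$, one forces $|\supp(S')| = 2$, hence $k = 3$, a contradiction.

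I expect the hard part to be exactly this last block, and in particular two points within it: producing a minimal zero-sum subsequence of length \emph{precisely} $2n-1$ (equivalently $\sigma(S) \in \Sigma_{n-2}(S)$, which is read off from $\Sigma_{\le n-1}(S) = G \setminus \{0\}$ except in degenerate situations such as $\sigma(S) = 0$, which must be treated separately), and then the cyclic-group analysis that upgrades $\mathsf h(S) = n-1$ to $|\supp(S)| = 3$. It is precisely here that the hypothesis ``$C_n \oplus C_n$ has Property~{\bf B}'' is indispensable — the unconditional bound $\mathsf h(W) \le n - 1$ for minimal zero-sum sequences does not suffice — consistently with {\bf C} being open for exactly those $n$ for which {\bf B} is open.
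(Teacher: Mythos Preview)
The paper's own proof of this lemma consists entirely of citations: part~1 to \cite[Theorem 5.8.3]{Ge-HK06a}, and part~2 to \cite[Theorem 6.2]{Ga-Ge03b} and \cite[Theorem 6.7.2(b)]{Ga-Ge06b}. Your treatment of part~1 is in the same spirit and is fine.

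For part~2 you go further than the paper and sketch an actual argument. The outline --- locate a minimal zero-sum subsequence of length $2n-1$, apply Property~{\bf B} to extract an element $g$ of multiplicity $n-1$, then pass to the quotient $G/\langle g\rangle\cong C_n$ and analyze the image --- is the right shape, and your preliminary steps (the observation $\Sigma_{\le n-1}(S)=G\setminus\{0\}$ via the extension trick, and the multiset count $\binom{n-1+k}{k}\ge n^2$ forcing $|\supp(S)|\ge 3$) are correct and useful.

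However, the two gaps you yourself flag are genuine and not merely cosmetic. First, from $\Sigma_{\le n-1}(S)=G\setminus\{0\}$ you only get $\sigma(S)\in\Sigma_{\le n-1}(S)$ when $\sigma(S)\ne 0$, and even then you need $\sigma(S)\in\Sigma_{n-2}(S)$ specifically to obtain a zero-sum complement of length exactly $2n-1$; neither the case $\sigma(S)=0$ nor the passage from ``length $\le n-1$'' to ``length $=n-2$'' is handled. Second, the sentence ``analyzing $\varphi(S')$ with the structure theory of long zero-sum sequences over a cyclic group \ldots\ one forces $|\supp(S')|=2$'' compresses the substantive part of the argument in \cite{Ga-Ge03b} into a promise. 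So your proposal is a correct and well-motivated outline of the referenced proof rather than a self-contained one; given the length of a full argument, the paper's decision simply to cite is the appropriate choice here.
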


\begin{proof}
1. See \cite[Theorem 5.8.3]{Ge-HK06a}.

2. See \cite[Theorem 6.2]{Ga-Ge03b} and \cite[Theorem
6.7.2.(b)]{Ga-Ge06b}.
\end{proof}

Results on $\eta (G)$ for groups of higher rank may be found in
recent work of C. Elsholtz et.al. (\cite{El04,E-E-G-K-R07}).

\medskip
\begin{lemma} \label{2.3}
Let $G = C_n \oplus C_n$ with \ $n \ge 2$.
\begin{enumerate}
\item Then the following statements are equivalent\,{\rm :}
      \begin{enumerate}
      \item If \ $S \in \mathcal F (G)$, \ $|S| = 3n-3$ and $S$ has
            no zero-sum subsequence $T$ of length $|T| \ge n$, then there exists
            some $a \in G$ such that \ $0^{n-1} a^{n-2} \t S$.

      \smallskip
      \item If \ $S \in \mathcal F(G)$ \ is zero-sum free and \ $|S| = 2n-2$,
            then \ $a^{n-2} \t S$ for some $a \in G$.

      \smallskip
      \item If \ $S \in \mathcal A(G)$ and \ $|S| = 2n-1$, then \ $a^{n-1} \t S$
            for some $a \in G$.

      \smallskip
      \item If \ $S \in \mathcal A(G)$ and \ $|S| = 2n-1$, then
            there exists a basis \ $(e_1, e_2)$ of \ $G$ and integers \ $x_1,
            \ldots , x_n \in [0, n-1]$, with $x_1 + \ldots + x_n \equiv 1 \mod
            n$, such that
            \[
            S = e_1^{n-1} \prod_{\nu=1}^n (x_{\nu} e_1 + e_2)\,.
            \]
      \end{enumerate}

\smallskip
\item Let $S \in \mathcal A (G)$ be of length $|S| = 2n-1$
      and $e_1 \in G$ with $\mathsf v_{e_1} (S)  = n-1$.
      If $(e_1, e_2')$ is a basis of $G$, then there exist some $b \in [0,
      n-1]$ and $a_1', \dots , a_n' \in [0, n-1]$, with $\gcd (b,n) = 1$ and
            $\sum_{\nu=1}^n a_{\nu}' \equiv 1 \mod n$,  such that
            \[
            S = e_1^{n-1}  \prod_{\nu=1}^n (a_{\nu}'e_1 + b e_2') \,.
            \]

\smallskip
\item If \ $S \in \mathcal A (G)$ \ has length $|S| = 2n-1$, then \
      $\ord (g) = n$ \ for all \ $g \in \supp (S)$.
\end{enumerate}
\end{lemma}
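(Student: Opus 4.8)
The plan is to argue by contradiction. Assume some $g\in\supp(S)$ has $d:=\ord(g)$ a proper divisor of $n$ (that $\ord(g)\mid n$ is clear, and $d=1$ is impossible since a minimal zero-sum sequence of length $>1$ contains no $0$). One first records the trivial bound $\mathsf v_g(S)\le d-1$: otherwise $g^d$ is a zero-sum subsequence of $S$, proper because $d\le n<2n-1=|S|$. Put $k:=\mathsf v_g(S)\in[1,d-1]$ and write $S=g^kS_0$ with $g\nmid S_0$; then $S_0$, being a proper nonempty subsequence of the minimal zero-sum sequence $S$, is zero-sum free over $G$, with $|S_0|=2n-1-k$ and $\sigma(S_0)=-kg\in H:=\langle g\rangle\cong C_d$.

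Now pass to the quotient $\varphi\colon G\to\overline G:=G/H\cong C_{n/d}\oplus C_n$, so $\mathsf D(\overline G)=n/d+n-1$. Since $\sigma(S_0)\in H$, the image $\varphi(S_0)$ is a zero-sum sequence over $\overline G$; factor it as $\varphi(S_0)=W_1\cdots W_r$ into minimal zero-sum sequences over $\overline G$, with induced factorization $S_0=S_{0,1}\cdots S_{0,r}$ and $\sigma(S_{0,i})=c_ig$. Because $S_0$ is zero-sum free over $G$ each $c_i\in[1,d-1]$; and because $S$ is minimal, no proper nonempty subsequence $g^a\prod_{i\in I}S_{0,i}$ of $S$ sums to $0$, which says precisely that $(c_1)\cdots(c_r)\,1^k$ is a minimal zero-sum sequence over $\Z/d\Z$, whence $r+k\le\mathsf D(C_d)=d$. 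If $r=1$ then $W_1=\varphi(S_0)$ gives $2n-1-k\le\mathsf D(\overline G)=n/d+n-1$, i.e. $k\ge n(d-1)/d$, which with $k\le d-1$ forces $n\le d$ — impossible; hence $r\ge2$, so $k\le d-2$ and $d\ge3$. Moreover, whenever $c_i\ge d-k$ the sequence $g^{d-c_i}S_{0,i}$ is a zero-sum subsequence of $S$ (here $g^{d-c_i}\mid g^k$), nonempty and proper (since $r\ge2$ gives $|S_{0,i}|<|S_0|$), again impossible; thus $c_i\le d-k-1$ for all $i$. Finally, combining $\sum_i|W_i|=|S_0|=2n-1-k$ with $|W_i|\le\mathsf D(\overline G)$ and $r\le d-k$ yields
\[
2n-1-k\ \le\ r\,(n/d+n-1)\ \le\ (d-k)(n/d+n-1).
\]

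The main obstacle is that this last inequality is violated only for $k=d-1$ — which is anyway already excluded — and is satisfied throughout the remaining range $1\le k\le d-2$, so length bookkeeping alone does not finish the proof. To close it one must use the structure more carefully, most naturally within an induction on $n$: pass instead to the $(n/p)$-torsion subgroup $pG\cong C_{n/p}\oplus C_{n/p}$ for a prime $p\mid n/d$, write $S=S_{pG}\cdot S'$ (with $S'$ zero-sum free over $G$), decompose $\varphi(S')$ into minimal zero-sum sequences over $G/pG\cong C_p\oplus C_p$, and reassemble their $pG$-components together with the terms of $S$ lying in $pG$ into a minimal zero-sum sequence $V$ over $pG$. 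The counting as above now only gives $|S_{pG}|\le n/p-1$; the genuinely hard step is to upgrade this to a contradiction — for instance by showing $|V|=\mathsf D(pG)=2n/p-1$, so that by induction every element of $V$, in particular $g$, has order $n/p$, and iterating gives $\ord(g)=n$. The tool for this is the rigid form of (near-)maximal minimal zero-sum sequences over $C_p\oplus C_p$, combined with the constraint that every subsequence $T\mid S'$ with $\sigma(T)\in pG$ satisfies $\sigma(T)\ne0$ (a consequence of $S'$ being zero-sum free over $G$); together these produce an over-determined system of conditions on the $pG$-components of the pieces of $\varphi(S')$ that cannot all hold. In the case $p=2$ the obstruction is completely explicit: the $2G$-parts of the two ``halves'' of $\varphi(S')$ that project to a fixed nonzero element of $C_2\oplus C_2$ turn out to be forbidden from equalling any element of $2G$. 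This structural analysis is the crux; the length bookkeeping only reduces the problem to it. The hypothesis $n_1=n_2$ is essential throughout — the analogous statement already fails for $C_2\oplus C_4$.
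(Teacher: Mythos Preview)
The paper does not prove this lemma; each part is deferred to an outside reference (parts~1 and~3 to \cite{Ge-HK06a}, part~2 to \cite{Ga-Ge03b}). So there is no in-paper argument to compare against directly. Your proposal addresses only part~3.

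For part~3, what you have written is not a complete proof. The first half --- quotienting by $\langle g\rangle$ and bounding via $\mathsf D(G/\langle g\rangle)=n/d+n-1$ --- is carried out carefully, and you correctly observe that it fails: the resulting inequality $2n-1-k\le(d-k)(n/d+n-1)$ holds throughout $1\le k\le d-2$ and yields no contradiction. The second half outlines an inductive scheme through $pG$ and $G/pG\cong C_p\oplus C_p$, but you stop exactly at the crucial point, labelling it ``the genuinely hard step'' and invoking an unstated ``rigid form'' result and an ``over-determined system of conditions'' without carrying anything out. A proposal that names its own missing lemma is not yet a proof.

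The gap can in fact be closed much more cheaply than your sketch suggests --- without induction and without any structural input about extremal sequences in $C_p\oplus C_p$ beyond the values of $\mathsf D$ and $\eta$. With a prime $p\mid n/d$ chosen so that $d\mid n/p$, let $\varphi\colon G\to G$ be multiplication by $n/p$; then $\Ker\varphi=pG\cong C_{n/p}\oplus C_{n/p}$, $\varphi(G)\cong C_p\oplus C_p$, and $\varphi(g)=0$. Two elementary observations now collide:
\begin{itemize}
\item[(a)] $\varphi(S)$ admits no factorisation into $2n/p$ nonempty zero-sum subsequences. Otherwise the corresponding blocks of $S$ would have sums lying in $\Ker\varphi$, producing a sequence of length $2n/p>\mathsf D(\Ker\varphi)=2n/p-1$ there and hence a proper nonempty zero-sum subsequence of $S$.
\item[(b)] If $0\in\supp(\varphi(S))$ then such a factorisation exists: remove one copy of $0$; from the remaining $2n-2$ terms repeatedly extract zero-sum blocks of length at most $p$ using $\eta(C_p\oplus C_p)=3p-2$, obtaining $2n/p-2$ such blocks; the residue has length at least $2p-2\ge 2$ and is itself zero-sum, giving $1+(2n/p-2)+1=2n/p$ blocks in all.
\end{itemize}
Since $\varphi(g)=0$ forces $0\in\supp(\varphi(S))$, (a) and (b) contradict one another. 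This is exactly the mechanism behind the clause ``$0\notin\supp(\varphi(S))$'' in Lemma~\ref{2.5}.1 of the present paper (specialised to $m=n/p$ and the lemma's $n$ equal to $p$), so the needed tool was already on the table; your inductive sketch gestures at the same ingredients but does not assemble them.
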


\begin{proof}
1. See \cite[Theorem 5.8.7]{Ge-HK06a}.

\smallskip
2. This follows easily from 1; for details see \cite[Proposition
4.1]{Ga-Ge03b}.

\smallskip
3. See \cite[Theorem 5.8.4]{Ge-HK06a}.
\end{proof}

\smallskip
The characterization in Lemma \ref{2.3}.1 gives rise to the following
definition.

\medskip
\begin{definition} \label{2.4}
Let \ $G = C_n \oplus C_n$ \ with $n \ge 2$.
\begin{enumerate}
\item Let $\Upsilon(G)$ be the set of all $S\in \mathcal A (G)$ for which there exists a basis
      \ $(e_1, e_2)$ of \ $G$ and integers \ $x_1,
      \ldots , x_n \in [0, n-1]$, with $x_1 + \ldots + x_n \equiv 1 \mod
      n$, such that \ $S = e_1^{n-1} \prod_{\nu=1}^n (x_{\nu} e_1 + e_2)$.

\smallskip
\item Let $\Upsilon_u(G)$ be the set of those $S\in \Upsilon(G)$
      with a unique term of multiplicity $n-1$, and let
      $\Upsilon_{nu}(G)=\Upsilon(G)\setminus \Upsilon_u(G)$.
\end{enumerate}
\end{definition}

\smallskip
Thus, by Lemma \ref{2.3}.1, a group \ $G = C_n \oplus C_n$ \ with $n
\ge 2$ has Property {\bf B} if and only if \ $\mathcal A (G) =
\Upsilon(G)$.

\medskip
\begin{lemma} \label{2.5}
Let \ $G = C_{mn} \oplus C_{mn}$ \ with \ $m, n \ge 2$, let \ $S\in
{\mathcal A} (G)$ \ be of length \ $|S|=2mn-1$, and  let $\varphi
\colon G \to G$ denote the multiplication by $m$ homomorphism.
\begin{enumerate}
\item $\varphi (S)$ is not a product of $2m$ zero-sum subsequences.
      Every  zero-sum   subsequence \ $T$ \ of $\varphi (S)$ of length \ $|T| \in [1,n]$ \ has length $n$, and
      $0 \notin \supp ( \varphi (S) )$.

\smallskip
\item $S$ may be written in the form $S = W_0 \cdot \ldots \cdot W_{2m-2}$,
      where $W_0, \ldots, W_{2m-2} \in \mathcal F (G)$ with $|W_{0}| = 2n-1$,
      $|W_1| = \ldots = |W_{2m-2}| = n$ and
      $\sigma (W_0), \ldots, \sigma (W_{2m-2}) \in \Ker (    \varphi )$.
\end{enumerate}
\end{lemma}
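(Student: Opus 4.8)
The plan is to analyze the image sequence $\varphi(S)$ over $G/\Ker(\varphi) \cong C_m \oplus C_m$ and pull back a suitable factorization. First I would set $H = \Ker(\varphi)$ and note that $H \cong C_n \oplus C_n$, while $\varphi(G) = mG \cong C_m \oplus C_m$; here I think of $\varphi(S)$ as a sequence over the subgroup $mG$, which is isomorphic to $C_m \oplus C_m$. Since $|S| = 2mn-1$, we have $|\varphi(S)| = 2mn-1$. For part (1), the key observation is that $\varphi(S)$ cannot contain $2m$ disjoint nonempty zero-sum subsequences: if it did, say $\varphi(S) = \varphi(T_1)\cdot\ldots\cdot\varphi(T_{2m})\cdot\varphi(R)$ with each $\sigma(T_i) \in H$, then using $\mathsf D(H) = 2n-1$ (Lemma~\ref{2.2}.1) together with a pigeonhole/greedy argument on the sums $\sigma(T_i) \in H$ one extracts from $T_1\cdot\ldots\cdot T_{2m}$ a zero-sum subsequence of $S$ of length strictly between $0$ and $|S|$, contradicting $S \in \mathcal A(G)$. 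More precisely, each $T_i$ is itself zero-sum free in $G$ (being a proper subsequence of the minimal zero-sum sequence $S$), so $|T_i| \le \mathsf D(G) - 1 = 2mn - 2$; the real constraint comes from the fact that the $2m$ elements $\sigma(T_1), \ldots, \sigma(T_{2m})$ lie in $H \cong C_n \oplus C_n$, and since $2m \ge \mathsf D(H) = 2n-1$ is \emph{not} automatic, one instead argues: grouping the $T_i$ in batches and using $\eta$ or $\mathsf D$ of $H$ forces a nontrivial zero-sum subsequence of $S$ of length $\le (2n-1)\cdot(\text{something}) < 2mn-1$ — I would need to be careful that the length stays in the open interval, but the mechanism is clear.

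The cleaner route for part (1): suppose $\varphi(S) = A_1 \cdot \ldots \cdot A_{2m}$ with each $A_j$ a nonempty zero-sum subsequence over $mG$. Lift to $S = B_1 \cdot \ldots \cdot B_{2m}$ with $\varphi(B_j) = A_j$, so each $\sigma(B_j) \in H$. Now consider the sequence $\sigma(B_1) \cdot \ldots \cdot \sigma(B_{2m})$ over $H \cong C_n \oplus C_n$: since $\mathsf D(H) = 2n-1 \le 2m$ would be needed, but when $m \ge n$ this holds and when $m < n$ it may fail — so instead I observe that $\sigma(B_1)+\ldots+\sigma(B_{2m}) = \sigma(S) = 0$, meaning the whole product is a zero-sum sequence over $H$; if it is not \emph{minimal} as a zero-sum sequence over $H$ then some proper nonempty sub-collection has sum zero, say $\sum_{j \in J}\sigma(B_j) = 0$ with $\emptyset \ne J \subsetneq [1,2m]$, and then $\prod_{j \in J} B_j$ is a zero-sum subsequence of $S$ of length $\ge |J| \ge 1$ and $\le 2mn - \min_j|B_j| < 2mn - 1$ (since each $B_j$ nonempty and at least one $B_j \not\subseteq \prod_{j\in J}B_j$... ) — forcing a contradiction with minimality of $S$; and if $\sigma(B_1)\cdot\ldots\cdot\sigma(B_{2m})$ \emph{is} minimal over $H$ then $2m \le \mathsf D(H) = 2n-1$, which combined with $|S| = 2mn-1 = \sum|B_j|$ and $|B_j| \ge 1$ gives... this needs the extra input that each $A_j$ having length a multiple of the exponent issue, so really one wants each $|A_j| \ge n$. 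That is exactly the "every zero-sum subsequence of $\varphi(S)$ of length in $[1,n]$ has length $n$" assertion, which I would prove first, also via a lifting argument: if $T \t \varphi(S)$ with $\sigma(T) = 0$ and $1 \le |T| \le n$, lift $T$ to $B \t S$ with $\sigma(B) \in H$; then $\sigma(B) \ne 0$ (else $B$ is a short zero-sum subsequence of $S$, contradicting minimality), so $\sigma(B)$ is a nonzero element of $H$, and complementing: $S = B \cdot B'$ with $\sigma(B') = -\sigma(B) \in H$, $\varphi(B') = \varphi(S)T^{-1}$, and $|B'| = 2mn - 1 - |B| = 2mn-1-|T|$. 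If $|T| < n$ one can, using $\mathsf D$ of the relevant subgroup or an Erdős–Ginzburg–Ziv type statement, split $B'$ or $B$ further to manufacture a forbidden short zero-sum subsequence of $S$ inside $\langle B \rangle$-cosets — the cleanest is: $\sigma(B) \in H$ has some order $d \t n$, and $B^d$-type repetition won't work since multiplicities are bounded, so instead use that $B' $ together with copies reconstructs; honestly the standard argument (as in \cite[Section 5.8]{Ge-HK06a}) uses that $\eta(H) = 3n-2$ and inductive structure. The statement $0 \notin \supp(\varphi(S))$ is immediate: if $\varphi(g) = 0$ for some $g \in \supp(S)$, then $mg \in \{$elements of order dividing... $\}$, i.e. $g \in H$, but then $g$ has order dividing $n < mn$ contradicting Lemma~\ref{2.3}.3 which says every element of $\supp(S)$ has order $mn$.

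For part (2), I would use part (1) together with Lemma~\ref{2.2}.1 applied to $\varphi(G) \cong C_m \oplus C_m$: we have $\mathsf D(C_m \oplus C_m) = 2m - 1$. Greedily extract from $\varphi(S)$ a zero-sum subsequence of length in $[1, m]$; by part (1) (applied with the roles of $m,n$ transposed — note $\varphi(S)$ is a sequence over $C_m \oplus C_m$ of length $2mn - 1 \ge 2m - 1$, actually we want a statement about short zero-sums, so we use $\eta(C_m \oplus C_m) = 3m-2$ or just $\mathsf D$). Concretely: repeatedly pull out zero-sum subsequences $V_1, V_2, \ldots$ of $\varphi(S)$; as long as the remaining part has length $\ge \mathsf D(C_m\oplus C_m) = 2m-1$ we can extract another nonempty zero-sum subsequence. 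Since $|\varphi(S)| = 2mn - 1 = n(2m-1) + (n-1)$, a counting argument — we can pull out zero-sum subsequences until at most $2m - 2$ elements remain, and by part (1) we pull out at most $2m - 1$ of them — shows $\varphi(S) = V_0 \cdot V_1 \cdot \ldots \cdot V_{k}$ where $V_1, \ldots, V_k$ are nonempty zero-sum subsequences with $k \le 2m - 2$ (the $2m$-bound from part (1) is strict) and $|V_0| \le 2m - 2$. To get the precise shape $|W_0| = 2n-1$ and $|W_1| = \ldots = |W_{2m-2}| = n$, one uses a more refined extraction: every extracted zero-sum subsequence of $\varphi(S)$ can be taken of length exactly $n$ (by the "length in $[1,n] \Rightarrow$ length $= n$" part of (1), extracting short ones which are automatically length $n$), so $\varphi(S) = V_1 \cdots V_{2m-2} \cdot V_0$ with $|V_i| = n$ for $i \in [1, 2m-2]$ and $|V_0| = 2mn - 1 - (2m-2)n = 2n - 1$; the count $2m - 2$ is forced because $2m - 1$ length-$n$ pieces would already use $(2m-1)n > 2mn - 1 - n$... and $2m$ pieces is impossible by (1). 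Then lift: set $W_i \t S$ with $\varphi(W_i) = V_i$, giving $\sigma(W_i) \in \Ker(\varphi) = H$ and the stated lengths. The main obstacle I anticipate is \emph{not} the lifting bookkeeping but rather pinning down in part (1) that short zero-sum subsequences of $\varphi(S)$ must have length exactly $n$ — this requires genuinely using the structure of minimal zero-sum sequences in $G$ of maximal length (not just $\mathsf D$), most likely by invoking Lemma~\ref{2.3} or the $\eta$-theory, and getting the length bookkeeping to land strictly inside $(0, 2mn-1)$ so that minimality of $S$ yields the needed contradiction.
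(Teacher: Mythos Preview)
The paper does not actually prove this lemma: it cites \cite[Lemma 3.14]{Ga-Ge03b}. Your reconstruction contains a fundamental error that derails the whole argument: you have swapped the kernel and the image of $\varphi$. For multiplication by $m$ on $C_{mn}\oplus C_{mn}$ one has $\Ker(\varphi)\cong C_m\oplus C_m$ and $\varphi(G)=mG\cong C_n\oplus C_n$, exactly the reverse of what you wrote (cf.\ the opening paragraph of Section~\ref{5}). This is precisely why the ``not $2m$ pieces'' step seemed problematic to you: with the correct identification, the lifted sums $\sigma(B_1),\ldots,\sigma(B_{2m})$ form a zero-sum sequence of length $2m>2m-1=\mathsf D(C_m\oplus C_m)$ over $\Ker(\varphi)$, hence a non-minimal one, which immediately yields a proper nonempty zero-sum subfamily and contradicts $S\in\mathcal A(G)$. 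No comparison of $m$ with $n$ is needed, and your detours through $\mathsf D(H)=2n-1$ were chasing a phantom.

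Once the identifications are corrected, your extraction strategy for part~2 is the right one, but you are missing the step that pins down the lengths. Given any zero-sum $T\mid\varphi(S)$ with $|T|\in[1,n]$, set $V_1=T$ and, using $\eta(C_n\oplus C_n)=3n-2$, greedily extract further zero-sums $V_2,\ldots,V_{2m-2}$ of length in $[1,n]$ (before extracting $V_i$ the remainder has length $\ge 2mn-1-(i-1)n\ge 3n-1$ for $i\le 2m-2$). The residual $V_0$ is zero-sum since $\sigma(\varphi(S))=0$, and by the $2m$-bound just established it cannot split further; thus $V_0$ is a \emph{minimal} zero-sum sequence over $C_n\oplus C_n$ and hence $|V_0|\le\mathsf D(C_n\oplus C_n)=2n-1$. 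Combined with $|V_0|\ge 2mn-1-(2m-2)n=2n-1$ this forces $|V_0|=2n-1$ and every $|V_i|=n$; in particular $|T|=n$, the case $|T|=1$ gives $0\notin\supp(\varphi(S))$, and lifting the $V_i$ to $W_i$ gives part~2. The key idea you never isolated is that the ``not $2m$ pieces'' bound makes $V_0$ minimal, which is what caps its length from above.
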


\begin{proof}
See \cite[Lemma 3.14]{Ga-Ge03b}.
\end{proof}

\medskip
The following is the Erd\H{o}s-Ginzburg-Ziv Theorem  and the
corresponding characterization of extremal sequences.

\medskip
\begin{theorem}\label{EGZ-thm}
Let $G$ be a cyclic group of order $n \ge 2$ and $S\in \Fc(G)$.
\begin{enumerate}
\item If $|S|\geq 2n-1$, then $0\in \Sigma_n(S)$.

\smallskip
\item If $|S|=2n-2$ and $0\notin \Sigma_n(S)$, then $S=g^{n-1}h^{n-1}$ for some $g,\,h\in G$ with $\ord (g-h) = n$.
\end{enumerate}
\end{theorem}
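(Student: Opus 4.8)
The plan is to dispose of part~1 classically and to reduce part~2 to a purely multiplicative statement. For part~1 one first treats $n=p$ prime: identifying $G$ with $\Z/p\Z$ and sorting $S=a_1\cdots a_{2p-1}$ so that $0\le a_1\le\cdots\le a_{2p-1}<p$, either some block $a_i\cdots a_{i+p-1}$ is constant, hence a zero-sum subsequence of length $p$; or else the two-element sets $\{a_i,a_{i+p-1}\}$ $(i\in[1,p-1])$ have, by the Cauchy--Davenport theorem, sumset equal to $\Z/p\Z$, so some transversal sums to $-a_{2p-1}$ and yields a zero-sum subsequence of length $p$. For composite $n$, pick a prime $p\t n$, write $n=pm$, and via the quotient $\pi\colon G\to G/pG\cong\Z/p\Z$ extract greedily $2m-1$ pairwise disjoint subsequences $V_1,\dots,V_{2m-1}$ with $|V_i|=p$ and $\sigma(V_i)\in pG\cong\Z/m\Z$; applying part~1 inductively over $\Z/m\Z$ to $\sigma(V_1)\cdots\sigma(V_{2m-1})$ yields $m$ of the $V_i$ whose concatenation is a zero-sum subsequence of $S$ of length $mp=n$.

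For part~2 the implication ``$S=g^{n-1}h^{n-1}$ with $\ord(g-h)=n\ \Rightarrow\ 0\notin\Sigma_n(S)$'' is immediate, since every length-$n$ subsequence equals $g^ih^{n-i}$ with $i\in[1,n-1]$ and has sum $i(g-h)\neq0$. For the converse, since $na=0$ for all $a\in G$, translating every entry of $S$ by a fixed element leaves $\Sigma_n(S)$ unchanged; so we may assume $0\in\supp(S)$ with $\vp_0(S)=\mathsf h(S)=:h$, and then $h\le n-1$ (else $0^n\t S$). It suffices to show $h=n-1$: for then $S=0^{n-1}T$ with $|T|=n-1$, the hypothesis forces $T$ to be zero-sum free, and since $|T|=n-1=\mathsf D(G)-1$ the classification of zero-sum free sequences of maximal length over cyclic groups gives $T=g^{n-1}$ with $\ord(g)=n$; translating back then gives $S$ of the required form. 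Thus the crux is: $h\le n-2$ implies $0\in\Sigma_n(S)$, a contradiction.

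I would prove this last statement by induction on $n$. If $n=p$ is prime it is classical: a sequence of length $2p-2$ over $\Z/p\Z$ with no zero-sum subsequence of length $p$ must be $g^{p-1}h^{p-1}$ with $g\neq h$ (by addition theorems --- Cauchy--Davenport together with the analysis of its equality case, Vosper's theorem), which has $\mathsf h=p-1$. If $n=pm$ with $p\t n$ prime, extract as before pairwise disjoint $V_1,\dots,V_{2m-2}$ with $|V_i|=p$, $\sigma(V_i)\in pG\cong\Z/m\Z$, and $S=V_1\cdots V_{2m-2}R$ with $|R|=2p-2$. If $\sigma(V_1)\cdots\sigma(V_{2m-2})$, regarded as a sequence over $\Z/m\Z$, has a zero-sum subsequence of length $m$, the corresponding $V_i$ concatenate to a zero-sum subsequence of $S$ of length $n$ and we are done. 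Otherwise, by the induction hypothesis for $C_m$, this sequence equals $a^{m-1}b^{m-1}$ with $\ord(a-b)=m$; one then re-chooses length-$p$ zero-sum blocks inside $V_iR$ and inside $V_iV_j$ (for $V_i,V_j$ with $\sigma(V_i),\sigma(V_j)$ mapping to $a,b$), observing that any outcome not of this rigid type produces over $\Z/m\Z$ a length-$(2m-2)$ sequence with at least three distinct values, hence --- by induction --- a zero-sum subsequence of length $m$. Feeding the rigidity back into $S$ then forces $S=g^{n-1}h^{n-1}$, contradicting $\mathsf h(S)\le n-2$.

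The hard part is this converse direction of part~2. The prime case needs the equality analysis of Cauchy--Davenport, not merely the inequality; and the composite case requires controlling \emph{every} factorisation of $S$ into length-$p$ zero-sum blocks. In practice it is probably cleanest to deduce part~2 from the known description of the critical case of the inequality $|\Sigma_n(S)|\ge|S|-n+1$ for sequences $S$ over $\Z/n\Z$, which specialises to exactly this statement.
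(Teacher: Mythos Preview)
The paper does not prove this result at all: it simply cites \cite[Corollary 5.7.5]{Ge-HK06a} and \cite[Theorem 2.5]{Na96b} for part~1, and \cite[Lemma 4]{Bi-Di92} and \cite[Section 7.A]{Ge08c} for part~2. So there is no argument in the paper to compare yours against; anything you write that is correct already goes further than the paper does.

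Your treatment of part~1 is the classical Erd\H{o}s--Ginzburg--Ziv argument (Cauchy--Davenport for the prime case, then the $p\mid n$ induction) and is complete and correct.

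For part~2 your outline is the right one, but it is only an outline. Two places are not yet proofs. First, in the prime case you invoke ``Cauchy--Davenport together with Vosper'' without saying how; the pairing $A_i=\{a_i,a_{i+p-1}\}$ now produces only $p-1$ sets and a transversal has length $p-1$, not $p$, so the part~1 argument does not carry over verbatim and one genuinely needs the equality analysis. Second, and more seriously, in the composite step you reach $\sigma(V_1)\cdots\sigma(V_{2m-2})=a^{m-1}b^{m-1}$ and then write ``re-choose length-$p$ zero-sum blocks \ldots feeding the rigidity back into $S$ forces $S=g^{n-1}h^{n-1}$''. That sentence is the entire content of the theorem in the composite case; what is missing is the swapping argument (exchange a term of some $V_i$ with a term of $R$, or of another $V_j$, observe that the new $(2m-2)$-tuple of block-sums must again be $a^{m-1}b^{m-1}$, and iterate) that pins down every term of $S$. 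This is exactly what \cite{Bi-Di92} does, and it takes several pages. Your closing remark --- deduce part~2 from the equality case of $|\Sigma_n(S)|\ge |S|-n+1$ --- is formally valid but not a simplification: that equality case is at least as hard as, and in most treatments proved via, the statement you are trying to establish.
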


\begin{proof}
1. See \cite[Corollary 5.7.5]{Ge-HK06a} or \cite[Theorem
2.5]{Na96b}.

\smallskip
2. See \cite[Lemma 4]{Bi-Di92} for one of the original proofs, and
\cite[Section 7.A]{Ge08c}.
\end{proof}

\smallskip
The following result was  a conjecture of Y. ould Hamidoune
\cite{Ha03b} confirmed in \cite[Theorem 1]{Gr05b}.

\medskip
\begin{theorem}\label{ham-result}
Let \ $G$ \ be a finite abelian group, $S\in \Fc(G)$ of length \
$|S|\ge |G|+1$, and $k \in \N$ with  $k\leq |\supp(S)|$. If \
$\mathsf h(S)\leq |G|-k+2$ \ and \ $0\notin \Sigma_{|G|}(S)$, then \
$|\Sigma_{|G|}(S)| \geq |S|-|G|+k-1$.
\end{theorem}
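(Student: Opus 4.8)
The plan is to fix $n := |G|$ and induct on $\ell := |S| \ge n + 1$. The base case $\ell = n + 1$ is immediate: every $n$-term subsum of $S$ is obtained by omitting a single term, so $\Sigma_n(S) = \sigma(S) - \supp(S)$ and hence $|\Sigma_n(S)| = |\supp(S)| \ge k = \ell - n + k - 1$ (only $k \le |\supp(S)|$ is needed here). For the step $\ell \ge n + 2$, I would delete one term $g$ of $S$, set $S' := g^{-1}S$ (so $|S'| = \ell - 1 \ge n + 1$), and use
\[
\Sigma_n(S) \ \supseteq \ \Sigma_n(S') \ \cup \ \bigl( g + \sigma(S') - \Sigma_{\ell - n}(S') \bigr) \,.
\]
If $S$ has a term $g$ with $\mathsf v_g (S) \ge 2$ and $0 \notin \Sigma_n(g^{-1}S)$, then $\supp(S') = \supp(S)$ and $\mathsf h(S') \le \mathsf h(S)$, so the induction hypothesis gives $|\Sigma_n(S')| \ge (\ell - 1) - n + k - 1$; it then suffices to check that the deleted term $g$ contributes at least one $n$-term subsum not already in $\Sigma_n(S')$, which upgrades the bound to the desired $|\Sigma_n(S)| \ge \ell - n + k - 1$.

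The configurations where this move is blocked — no repeated term can be removed without either creating a zero $n$-term subsum or failing to enlarge $\Sigma_n$, together with the squarefree case — are where the real work lies, and I would attack these with a Kneser-type addition theorem for restricted sums. Applying the theorem of DeVos, Goddyn and Mohar to the family of singletons $\{g\}$, one for each term $g$ of $S$, yields, with $H := \text{\rm Stab}(\Sigma_n(S))$,
\[
|\Sigma_n(S)| \ \ge \ |H| \, (\ell - n + 1) \,.
\]
This already disposes of every case in which $|H|(\ell - n + 1) \ge \ell - n + k - 1$, and in the complementary regime it forces $\supp(S)$ into a bounded number of $H$-cosets. Combining this rigidity with the reflection identity $|\Sigma_n(S)| = |\Sigma_{\ell - n}(S)|$, with the Erd\H{o}s--Ginzburg--Ziv theorem and its extremal characterization (Theorem \ref{EGZ-thm}), and with a direct ``interval'' count in the case that $\supp(S)$ lies inside a single coset of a cyclic subgroup, should settle the finitely many remaining explicit families.

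The hard part will be twofold. Already in the generic inductive step, one must show that the deleted term $g$ genuinely enlarges $\Sigma_n(S)$, i.e.\ bound $|\Sigma_n(S)| - |\Sigma_n(g^{-1}S)|$ from below; this is itself a restricted-addition estimate, and it is precisely here that the multiplicity hypothesis $\mathsf h(S) \le |G| - k + 2$ is needed, to ensure that the terms of $S$ are spread out enough for $g + \Sigma_{\ell - n}(S')$ to reach outside $\Sigma_n(S')$. Then, in the obstructed cases, extracting the structure of $\supp(S)$ from near-equality in the DeVos--Goddyn--Mohar bound and finishing the few remaining explicit configurations is delicate. Throughout, keeping the parameter $k$ correctly synchronized with both constraints $k \le |\supp(S)|$ and $k \le |G| - \mathsf h(S) + 2$ as one passes between $S$ and $g^{-1}S$ is the bookkeeping that must be handled with care.
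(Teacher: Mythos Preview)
The paper does not prove Theorem~\ref{ham-result}; it is quoted from the literature (a confirmed conjecture of Hamidoune, established in \cite[Theorem~1]{Gr05b}) and used as a black box in CLAIM~D of Section~\ref{5}. So there is no ``paper's own proof'' to compare against.

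That said, your proposal is a plan rather than a proof, and you acknowledge this yourself: the phrases ``should settle the finitely many remaining explicit families'' and ``the hard part will be twofold'' mark exactly where the argument is missing. The base case $|S|=n+1$ is correct. The inductive framework is reasonable, and invoking the DeVos--Goddyn--Mohar partition analogue of Kneser's theorem is indeed the natural tool (and is in fact the engine behind the published proof). But the two places you flag as difficult are genuinely the whole problem: showing that deleting a repeated term strictly enlarges $\Sigma_n$ (or else forces enough structure to finish directly), and handling the obstructed configurations where every deletion either creates a zero $n$-sum or fails to help. Neither of these is routine bookkeeping; each requires its own structural argument, and you have not supplied one. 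In particular, the claim that near-equality in DeVos--Goddyn--Mohar plus Theorem~\ref{EGZ-thm} ``should settle'' the residual cases is optimistic without a concrete case split and a verification for each branch.

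In short: the outline points in the right direction and matches the spirit of the published proof, but as written it is a strategy sketch with the substantive steps deferred, not a proof.
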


\bigskip
\section{Preparatory Results.} \label{3}
\bigskip

We first prove several lemmas determining in what ways a sequence
$S\in \Upsilon(C_m \oplus C_m)$, where $m \ge 4$, can be slightly
perturbed and still remain in $\Upsilon(C_m \oplus C_m)$. These will
later be heavily used in Section \ref{5}, always in the setting where
$K=\Ker(\varphi)$ and $\varphi \colon G \to G$ is the
multiplication by $m$ map.

\medskip
\begin{lemma} \label{lem-Pertebation-I}
Let \ $K=C_m\oplus C_m$ \ with $m \ge 4$, let \ $g \in K$, \ and let
\ $S=f_1^{m-1}\prod_{\nu=1}^{m}(x_{\nu} f_1+f_2) \in \Upsilon_u(K)$
\ with $x_1, \ldots, x_m \in \Z$.
\begin{enumerate}
\item If $S'=f_1^{-2}S(f_1+g)(f_1-g)\in \Upsilon(K)$, then  $g=0$ and hence $S=S'$.

\smallskip
\item If $S'=f_1^{-1}(x_jf_1+f_2)^{-1}S(f_1+g)(x_jf_1+f_2-g)\in \Upsilon(K)$, then  $g\in \{0,\,(x_j-1)f_1+f_2\}$
      and hence $S=S'$.

\smallskip
\item If $S'=(x_jf_1+f_2)^{-1}(x_kf_1+f_2)^{-1}S(x_jf_1+f_2+g)(x_kf_1+f_2-g)\in \Upsilon(K)$
      with $j, k \in [1,m]$ distinct, then $g\in \langle f_1 \rangle$.
\end{enumerate}
\end{lemma}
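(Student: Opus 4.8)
All three parts of Lemma~\ref{lem-Pertebation-I} follow the same strategy: we are modifying two terms of $S$ by $\pm g$ in a way that preserves both the sum $\sigma(S)$ and membership in $\mathcal A(K)$, and we must show that forcing $S'\in\Upsilon(K)$ severely restricts $g$. Throughout write $K=C_m\oplus C_m$, and recall from Lemma~\ref{2.3}.3 that every term of a sequence in $\mathcal A(K)$ of length $2m-1$ has order exactly $m$. The common engine is Lemma~\ref{2.3}.2: once we know that $S'\in\Upsilon(K)$ \emph{and} that some element (for us, $f_1$ or one of the $x_jf_1+f_2$) occurs in $S'$ with multiplicity $m-1$, we may apply Lemma~\ref{2.3}.2 to that element together with a convenient basis to get a normalized description of $S'$, and then compare it term-by-term with the known normalized form of $S$.

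\textbf{Part (1).} Here $S'=f_1^{-2}S(f_1+g)(f_1-g)$, so $\mathsf v_{f_1}(S')=m-3$, while the only terms of $S'$ not among the $x_\nu f_1+f_2$ are $f_1+g$ and $f_1-g$. If $g\ne 0$ then, since $S\in\Upsilon_u(K)$ has a \emph{unique} element of multiplicity $m-1$, namely $f_1$, the element of multiplicity $m-1$ in $S'$ (which exists because $S'\in\Upsilon(K)$ and $m\ge 4$ forces $m-3<m-1$) must be some $h=x_jf_1+f_2$ whose multiplicity got boosted by one of $f_1\pm g$; so $\mathsf v_h(S)=m-2$ and, say, $f_1+g=h$. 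But then $h$ and $f_1-g=2f_1-h$ both lie in $S'$, and applying Lemma~\ref{2.3}.2 to $S'$ with $h=e_1$ and a basis $(h,e_2')$ one obtains a representation $S'=h^{m-1}\prod(a'_\nu h+b e_2')$ with $\gcd(b,m)=1$; pulling this back through the substitution and comparing supports with $S$ forces $f_1\in\langle h\rangle$, contradicting that $(f_1,f_2)$ is a basis (note $h\notin\langle f_1\rangle$). Hence $g=0$ and $S'=S$. The care needed is that $m-2$ could a priori equal $m-1$ only if $m$ were too small; for $m\ge 4$ the gap argument is clean, but one must also rule out the degenerate subcase $f_1+g=f_1-g$ (i.e.\ $2g=0$) separately, using that $\ord(f_1\pm g)=m$ is odd-free only via $\ord$ considerations in $K$.

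\textbf{Parts (2) and (3).} Part (2) is the hybrid case: $S'=f_1^{-1}(x_jf_1+f_2)^{-1}S(f_1+g)(x_jf_1+f_2-g)$, so $\mathsf v_{f_1}(S')=m-2$. If $f_1+g=f_1$, i.e.\ $g=0$, we are done; otherwise the element of multiplicity $m-1$ in $S'$ must be created by one of the two new terms landing on an element of multiplicity $m-2$ in $S$. The two natural candidates give exactly the two allowed values $g=0$ (the degenerate one, when the new $f_1+g$ still restores $f_1$) and $g=(x_j-1)f_1+f_2$ (when $f_1+g=x_jf_1+f_2$, the element we just decremented, so its count returns to its original value and $x_jf_1+f_2-g=f_1$ restores an $f_1$). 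Any other value of $g$ would force $\mathsf v_{f_1+g}(S)=m-2$ for a \emph{third} element, and then Lemma~\ref{2.3}.2 applied with $e_1=f_1+g$ and comparison with $S$ yields a contradiction exactly as in Part (1). Part (3) is the weakest conclusion and the easiest: $S'=(x_jf_1+f_2)^{-1}(x_kf_1+f_2)^{-1}S(x_jf_1+f_2+g)(x_kf_1+f_2-g)$ leaves $\mathsf v_{f_1}(S')=m-1$, so $f_1$ is still an element of multiplicity $m-1$; since $S'\in\Upsilon(K)$, Lemma~\ref{2.3}.2 with $e_1=f_1$ and the basis $(f_1,f_2)$ gives $S'=f_1^{m-1}\prod_{\nu=1}^m(a'_\nu f_1+f_2)$, which says precisely that every term of $S'$ outside the $f_1$-part lies in the coset $f_2+\langle f_1\rangle$. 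In particular $x_jf_1+f_2+g\in f_2+\langle f_1\rangle$, hence $g\in\langle f_1\rangle$.

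\textbf{Main obstacle.} The routine part is the bookkeeping of multiplicities; the genuinely delicate point, shared by Parts (1) and (2), is justifying that the element of multiplicity $m-1$ in $S'$ must arise from one of the two modified terms rather than from the ambient structure — this is where $S\in\Upsilon_u(K)$ (uniqueness of the $(m-1)$-multiplicity element) is essential, together with the bound $m\ge 4$ to guarantee $m-3,m-2<m-1$. Once that is pinned down, invoking Lemma~\ref{2.3}.2 with the right choice of basis and matching the two normal forms of $S$ and $S'$ is mechanical, the only remaining subtlety being the $2g=0$ degeneracies, which one disposes of using $\ord(f_1\pm g)=m$ from Lemma~\ref{2.3}.3.
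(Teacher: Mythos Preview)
Your overall strategy---identify the element of multiplicity $m-1$ in $S'$ and use the $\Upsilon$-structure to constrain it---matches the paper's, and your Part~(3) is correct and essentially identical to the paper's one-line argument.

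Parts~(1) and~(2) have genuine gaps. In Part~(1), the crucial step ``forces $f_1\in\langle h\rangle$'' is asserted but not carried out: you need to observe that \emph{both} $f_1$ and the second new term $f_1-g=2f_1-h$ lie in $\supp(S')\setminus\{h\}$, hence (by Lemma~\ref{2.3}.2) in a single coset of $\langle h\rangle$, so their difference $h-f_1$ lies in $\langle h\rangle$, giving $f_1\in\langle h\rangle$. ``Comparing supports with $S$'' does not name this element. Your $2g=0$ aside is also off: no separate treatment is needed, since $2f_1-h\ne h$ whenever $m\ge 3$ (as $\ord(f_1-h)=m$); and the phrase ``$m$ is odd-free'' is meaningless---there is no parity hypothesis on $m$ in this lemma.

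In Part~(2), you only treat the case where the multiplicity-$(m-1)$ element $h$ of $S'$ arises from $f_1+g$ coinciding with some $xf_1+f_2$; you omit the symmetric case where $x_jf_1+f_2-g$ coincides with some $xf_1+f_2$. Moreover, ``contradiction exactly as in Part~(1)'' does not go through verbatim, because the configuration is different. The paper's endgame here is more direct and worth noting: once one new term equals some $xf_1+f_2$, the \emph{other} new term is forced into $\langle f_1\rangle\setminus\{f_1\}$ (e.g.\ $x_jf_1+f_2-g=(x_j-x+1)f_1$), and then $f_1^{m-2}\cdot\bigl((x_j-x+1)f_1\bigr)$ contains a proper zero-sum subsequence, contradicting $S'\in\mathcal A(K)$. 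This zero-sum-free argument replaces the second invocation of Lemma~\ref{2.3}.2 and handles both subcases symmetrically.
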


\begin{proof}
1. Assume to the contrary that $g \ne 0$ and thus $S \ne S'$. Then
$\mathsf v_{f_1} (S') < m-1$ and, since $S \in \Upsilon_u (K)$, it
follows that there is some $j \in [1,m]$ such that
$(x_jf_1+f_2)^{m-1} \t S'$, $(x_jf_1+f_2)^{m-3} \t S$, and
$x_jf_1+f_2 = f_1+g$. If we set $f_2' = x_jf_1+f_2$, then $S =
f_1^{m-1} \prod_{\nu=1}^m \bigl( (x_{\nu} - x_j)f_1 + f_2' \bigr)$,
and thus we may assume that $f_2 = f_2'$. Then $f_2 = f_1+g$ and
$f_1-g = f_2-2g = 2f_1-f_2$. Since $m \ge 4$, it follows that $f_1
\t S'$. Since $S'\in \Upsilon(K)$, $f_2^{m-1}\t S'$ and $f_1,
2f_1-f_2 \in \supp (S') \setminus \{f_2\}$, it follows that
$(2f_1-f_2) - f_1 = f_1-f_2 \in \langle f_2 \rangle$, a
contradiction.

\smallskip
2. After renumbering, we may suppose  that $j=n$. If $f_1^{m-1}|S'$
then $f_1+g=f_1$ or $x_nf_1+f_2-g=f_1$, and $S'=S$. Otherwise,
$f_1^{m-1} \nmid S'$ and we shall derive a contradiction. Observe that we cannot have
$f_1+g=x_nf_1+f_2-g=x_jf_1+f_2$. Thus, since
$S'\in \Upsilon(K)$ and $S\in \Upsilon_u(K)$, it follows that (after
renumbering again if necessary) either
\[
S'=f_1^{m-2}(xf_1+f_2)^{m-1}(x_nf_1+f_2-g)(x_{n-1}f_1+f_2) \quad
\text{ with} \quad f_1+g=xf_1+f_2 \,,
\]
 or
\[
S'=f_1^{m-2}(xf_1+f_2)^{m-1}(f_1+g)(x_{n-1}f_1+f_2) \quad \text{
with}  \quad x_nf_1+f_2-g=xf_1+f_2 \,.
\]
In the first case, we have $(x_nf_1+f_2-g)=(x_n-x+1)f_1$ and hence
$f_1^{m-2}((x_n-x+1)f_1)|S'$. However, since $(x_n-x+1)f_1 =
(x_nf_1+f_2-g) \neq f_1$, it follows that $f_1^{m-2}((x_n-x+1)f_1)$
is not zero-sum free, a contradiction. In the second case,  one can
derive a contradiction similarly.

\smallskip
3. Since $m\geq 3$, $f_1^{m-1} \t S'$ and $S'\in \Upsilon (K)$, it
follows that $(x_jf_1+f_2+g)-(x_lf_1+f_2)\in \langle f_1 \rangle$,
where $l\neq j,\,k$, and hence $g\in \langle f_1 \rangle$.
\end{proof}

\medskip
\begin{lemma} \label{lem-Pertebation-II}
Let \ $K=C_m\oplus C_m$ \ with $m \ge 4$, \ $g \in K$ \ and \ $S =
f_1^{m-1}f_2^{m-1}(f_1+f_2) \in \Upsilon_{nu}(K)$.
\begin{enumerate}
  \item If \ $S'=f_1^{-2}S(f_1+g)(f_1-g)\in \Upsilon(K)$, \ then \ $g\in \langle f_2 \rangle$.

  \item If \ $S'=f_2^{-2}S(f_2+g)(f_2-g)\in \Upsilon(K)$, \ then \ $g\in \langle f_1 \rangle$.

  \item If \ $S'=f_1^{-1}f_2^{-1}S(f_1+g)(f_2-g)\in \Upsilon(K)$, \ then $S=S'$ \ and \ $g\in\{0,\,-f_1+f_2\}$.

  \item If \ $S'=f_1^{-1}(f_1+f_2)^{-1}S(f_1+g)(f_1+f_2-g)\in \Upsilon(K)$, \ then \ $g\in \langle f_2 \rangle$.

  \item If \ $S'=f_2^{-1}(f_1+f_2)^{-1}S(f_2+g)(f_1+f_2-g)\in \Upsilon(K)$, \ then \ $g\in \langle f_1 \rangle$.
\end{enumerate}
\end{lemma}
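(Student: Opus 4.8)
The plan is to mimic the proof of Lemma \ref{lem-Pertebation-I}, the only structural change being that $S$ now lies in $\Upsilon_{nu}(K)$ rather than $\Upsilon_u(K)$; this affects which element of $S$ (and of the perturbed $S'$) carries a large multiplicity. First I would record the preliminaries. Unwinding Definition \ref{2.4}, every sequence in $\Upsilon_{nu}(K)$ has the shape $a^{m-1}b^{m-1}(a+b)$ for some basis $(a,b)$ of $K$; comparing supports and multiplicities with $f_1^{m-1}f_2^{m-1}(f_1+f_2)$ then forces $(f_1,f_2)$ to be a basis of $K$, so $f_1\neq f_2$ and $\ord(f_1)=\ord(f_2)=m$. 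Next, in each of the five cases $S'\in\Upsilon(K)\subseteq\mathcal A(K)$ is a minimal zero-sum sequence of length $2m-1$, so Lemma \ref{2.3}.3 gives $\ord(g')=m$ for every $g'\in\supp(S')$ (in particular $0\notin\supp(S')$), no $g'$ can occur in $S'$ with multiplicity $\geq m$ (otherwise $(g')^{m}\t S'$ would be a proper zero-sum subsequence), and, since $S'\in\Upsilon(K)$ has an element of multiplicity $\geq m-1$, we conclude $\mathsf h(S')=m-1$.

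The main tool is the following ``coset principle'': if $T\in\Upsilon(K)$ and $\mathsf v_c(T)=m-1$ for some $c\in K$, then $\supp(T)\setminus\{c\}$ lies in a single coset of $\langle c\rangle$ other than $\langle c\rangle$ itself. This drops straight out of Definition \ref{2.4}: either $c$ is the distinguished element ``$e_1$'' of the representation and $\supp(T)\setminus\{c\}\subseteq e_2+\langle c\rangle$, or $T\in\Upsilon_{nu}(K)$ has the shape $a^{m-1}b^{m-1}(a+b)$ with $c\in\{a,b\}$, and then $\supp(T)\setminus\{c\}$ is a two-element subset of a coset of $\langle c\rangle$.

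With this in hand, cases (1) and (4) — and, by applying the automorphism of $K$ interchanging $f_1$ and $f_2$, also cases (2) and (5) — go through uniformly. In each of these the perturbation leaves the element $f_2$ untouched with multiplicity $m-1$, so I would first check that no other term of $S'$ equals $f_2$ (such a collision would push $\mathsf v_{f_2}(S')$ to at least $m$, impossible), whence $\mathsf v_{f_2}(S')=m-1$ exactly. The coset principle then puts $\supp(S')\setminus\{f_2\}$ into one coset of $\langle f_2\rangle$; since both $f_1$ (which still lies in $\supp(S')$ as $m\geq 4$) and the perturbed term $f_1+g$ belong to that set, subtracting gives $g\in\langle f_2\rangle$, as desired.

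The genuinely delicate case is (3), and I expect it to be the main obstacle: there the perturbation pulls the multiplicities of both $f_1$ and $f_2$ down to $m-2<m-1$, so the element of multiplicity $m-1$ that $S'$ must possess can only arise from a coincidence among the five terms $f_1,\,f_2,\,f_1+f_2,\,f_1+g,\,f_2-g$ (whose base multiplicities are $m-2,\,m-2,\,1,\,1,\,1$). I would run through the possible coincidences, using $m\geq 4$, the absence of $0$ in $\supp(S')$, and the ``no multiplicity $m$'' bound to rule out every case except $g=0$ and $g=f_2-f_1$; in both of these $\{f_1+g,\,f_2-g\}=\{f_1,f_2\}$, so $S'=S$ and $g\in\{0,\,-f_1+f_2\}$. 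Apart from this bookkeeping in case (3), the only additional work is verifying in cases (1), (2), (4), (5) that the two perturbed terms do not accidentally coincide with the preserved heavy element or with $0$, which in each case is a line or two via the order and multiplicity constraints already in place.
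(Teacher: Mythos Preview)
Your proposal is correct and follows essentially the same approach as the paper. Your ``coset principle'' is precisely what the paper invokes (tersely) via Lemma~\ref{2.3}.2, and your treatment of parts (1), (2), (4), (5) matches the paper's almost verbatim---the only cosmetic difference is that the paper compares the perturbed term with $f_1+f_2$ rather than with $f_1$, so it does not need the observation that $f_1$ still lies in $\supp(S')$. For part (3) your planned case analysis is exactly the paper's argument, which compresses it to two lines: if neither $f_1^{m-1}$ nor $f_2^{m-1}$ divides $S'$, then the element of multiplicity $m-1$ must come entirely from the three singletons $f_1+f_2,\,f_1+g,\,f_2-g$, forcing $m=4$ and all three equal, which contradicts $(f_1,f_2)$ being a basis.
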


\begin{proof}
1. Since $f_2^{m-1} \t S'$ and $S'\in \Upsilon(K)$, it follows that
$f_1+g-(f_1+f_2)\in \langle f_2 \rangle$, whence $g \in \langle f_2
\rangle$.

\smallskip
2. Analogous to 1.

\smallskip
3. If $f_1^{m-1} \t S'$ or $f_2^{m-1} \t S'$, the result follows.
Otherwise, $m\ge  4$ and $\mathsf h(S')=m-1$  imply that $m=4$ and
$f_1+g=f_2-g=f_1+f_2$, a contradiction.

\smallskip
4. Since $m \geq 3$, it follows that $f_1 \t S'$. Now we have
$f_2^{m-1} \t S'$ and $S'\in \Upsilon (K)$ so that
$(f_1+f_2-g)-f_1\in \langle f_2 \rangle$, implying $g\in \langle f_2
\rangle$, as desired.

\smallskip
5. Analogous to 4.
\end{proof}

\medskip
\begin{lemma} \label{lem-Pertebation-III}
Let \ $K=C_m\oplus C_m$ \ with $m \ge 4$, \ $g \in K$ \ and \ $S =
f_1^{m-1}f_2^{m-1}(f_1+f_2) \in \Upsilon_{nu}(K)$.
\begin{enumerate}
  \item If \ $S'=f_1^{-2}S(f_1+g)(f_1-g)\in \Upsilon_{nu}(K)$, then
        \ $g=0$, \ and hence \ $S=S'$.

  \item If \ $S'=f_2^{-2}S(f_2+g)(f_2-g)\in \Upsilon_{nu}(K)$, \ then  \ $g=0$, \ and hence \ $S=S'$.

  \item If \ $S'=f_1^{-1}f_2^{-1}S(f_1+g)(f_2-g)\in \Upsilon_{nu}(K)$, \ then \ $g\in\{0,\,-f_1+f_2\}$, and hence $S=S'$.

  \item If \ $S'=f_1^{-1}(f_1+f_2)^{-1}S(f_1+g)(f_1+f_2-g)\in \Upsilon_{nu}(K)$, \ then \ $g\in
        \{0,\,f_2\}$, and hence $S=S'$.

  \item If \ $S'=f_2^{-1}(f_1+f_2)^{-1}S(f_2+g)(f_1+f_2-g)\in \Upsilon_{nu}(K)$, \ then \ $g\in
        \{0,\,f_1\}$, and hence $S=S'$.
\end{enumerate}
\end{lemma}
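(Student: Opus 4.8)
The statement is a refinement of Lemma~\ref{lem-Pertebation-II}: in each part the hypothesis $S'\in\Upsilon_{nu}(K)$ is strictly stronger than the hypothesis $S'\in\Upsilon(K)$ used there. My plan is to quote the matching part of Lemma~\ref{lem-Pertebation-II} and then exploit one extra fact: a sequence in $\Upsilon_{nu}(K)$ has \emph{two} distinct terms of multiplicity exactly $m-1$. Indeed, by Lemma~\ref{2.3}.3 every term of a sequence in $\mathcal{A}(K)$ of length $2m-1$ has order $m$, so no multiplicity can reach $m$; since a sequence in $\Upsilon(K)$ carries a term of multiplicity at least $m-1$, this yields $\mathsf h(S')=m-1$, and the definition of $\Upsilon_{nu}(K)$ then forces a second term of that multiplicity. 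Part~3 needs nothing more, because $\Upsilon_{nu}(K)\subseteq\Upsilon(K)$ and Lemma~\ref{lem-Pertebation-II}.3 already gives $S=S'$ with $g\in\{0,-f_1+f_2\}$.

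For part~1 I would first apply Lemma~\ref{lem-Pertebation-II}.1 to write $g=tf_2$ with $t\in[0,m-1]$, so that
\[
S'=f_1^{m-3}\,f_2^{m-1}\,(f_1+f_2)(f_1+tf_2)(f_1-tf_2)
\]
is a sequence of length $2m-1$. None of its three rightmost terms equals $f_2$ (otherwise $f_1\in\langle f_2\rangle$), so $\mathsf v_{f_2}(S')=m-1$; and since at most three terms of $S'$ lie outside $\{f_1,f_2\}$, the second term of multiplicity $m-1$ has to be $f_1$ unless $m=4$ and those three terms all coincide, which is impossible since it would force $2f_2=0$ against $\ord(f_2)=m\ge 4$. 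Thus $\mathsf v_{f_1}(S')=m-1$; but $\mathsf v_{f_1}(S')$ equals $m-3$ when $t\ne 0$ and $m-1$ when $t=0$, so $t=0$, i.e.\ $g=0$ and $S=S'$. Part~2 is the mirror image under interchanging $f_1$ and $f_2$.

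Part~4 follows the same scheme: Lemma~\ref{lem-Pertebation-II}.4 gives $g=tf_2$ with $t\in[0,m-1]$, and
\[
S'=f_1^{m-2}\,f_2^{m-1}\,(f_1+tf_2)\bigl(f_1+(1-t)f_2\bigr)
\]
has at most two terms outside $\{f_1,f_2\}$, while $\mathsf v_{f_2}(S')=m-1$ as before; so the second term of multiplicity $m-1$ must be $f_1$. Since $\mathsf v_{f_1}(S')$ equals $m-2$, and $m-1$ exactly when one of $t=0$, $t=1$ holds, I get $t\in\{0,1\}$, i.e.\ $g\in\{0,f_2\}$, and in either case the two inserted terms are $f_1$ and $f_1+f_2$, so $S=S'$. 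Part~5 is again the mirror image.

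I expect the only point requiring care to be the small-order bookkeeping when $m=4$ (so $m-1=3$): in the three-term perturbations of parts~1--3 one must check that the three terms which could lie outside $\{f_1,f_2\}$ cannot all coincide and thereby create an unexpected third term of multiplicity $3$. In each case the coincidences force a relation such as $2f_1=0$, $2f_2=0$, or $f_1\in\langle f_2\rangle$, all contradicting $\ord(f_1)=\ord(f_2)=m\ge 4$; everything else reduces to the one-line multiplicity counts above.
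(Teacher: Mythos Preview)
Your proof is correct and follows essentially the same multiplicity-counting idea as the paper. The paper's own proof is extremely terse (it treats part~1 in two lines and dismisses parts~2--5 as ``Similar''), arguing directly that if $g\ne 0$ then the second term of multiplicity $m-1$ in $S'$ would have to arise from the few perturbed factors, which is impossible for $m\ge 4$. Your version inserts an extra preliminary step---quoting the corresponding part of Lemma~\ref{lem-Pertebation-II} to pin $g$ into $\langle f_1\rangle$ or $\langle f_2\rangle$---which is not strictly needed but does streamline the check that none of the perturbed factors can equal $f_2$ (resp.\ $f_1$); after that your counting is identical in spirit to the paper's.
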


\begin{proof}
1. Assume to the contrary that $g \ne 0$ and $S \ne S'$. Since $S'
\in \Upsilon_{nu}(K)$ and $m \ge 4$, we get $f_1+g=f_1-g=f_1+f_2$
and hence $-2f_2=2g=0$, a contradiction.

\smallskip
2. - 5. Similar.
\end{proof}
%

Next we prove two simple structural lemmas which will be our all-purpose  tools for turning locally obtained information into global structural conditions on $S$. They are also the reason for the hypothesis of $m$ and $n$ odd in the Theorem.

\medskip
\begin{lemma} \label{lem-basic-exchange}
Let \ $G$ \ be an abelian group,  $a\in G$ with $\ord(a)>2$, and
$S,\,T\in \Fc(G) \setminus \{1\}$ with $|\supp(S)|\geq |\supp(T)|$.
\begin{enumerate}
\item If \ $\supp(S)-\supp(T)=\{0\}$, \ then $S=g^{|S|}$ and $T=g^{|T|}$, for some $g\in G$.

\smallskip
\item If \ $\supp(S)-\supp(T)\subset \{0,a\}$, then \ $S=g^{s}(g+a)^{|S|-s}$ and $T=g^{|T|}$,
 for some $g\in G$ and  $s \in [0, |S|]$.

\smallskip
\item If \ $|S|,\, |T| \geq 2$ and $\bigcup_{i=1}^{2}(\Sigma_i(S)-\Sigma_i(T))\subset
      \{0,a\}$, \
        then either $S=g^{|S|-1}(g+a)$ and $T=g^{|T|}$, or else $S=g^{|S|}$ and $T=g^{|T|}$, for some $g\in G$.
\end{enumerate}
\end{lemma}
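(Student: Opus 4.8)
The plan is to prove the three parts in sequence, with each building on the previous one. For part~1, fix any $g\in\supp(T)$. The hypothesis $\supp(S)-\supp(T)=\{0\}$ forces $\supp(S)=\{g\}$; and since $|\supp(S)|\ge|\supp(T)|$ we get $|\supp(T)|=1$ as well, so $\supp(T)=\{g\}$ too. Hence $S=g^{|S|}$ and $T=g^{|T|}$. For part~2, fix $g\in\supp(T)$. From $\supp(S)-\supp(T)\subset\{0,a\}$ we get $\supp(S)\subset\{g,g+a\}$, so $S=g^s(g+a)^{|S|-s}$ for some $s\in[0,|S|]$. It remains to see $|\supp(T)|=1$. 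If not, write $g,h\in\supp(T)$ with $h\ne g$; then $\supp(S)-h$ must also lie in $\{0,a\}$, and combining with $\supp(S)-g\subset\{0,a\}$ one finds $g-h\in\{0,a,-a,2a,-2a\}$. The cases $g-h=0$ is excluded; $g-h=\pm a$ together with $\supp(S)\subset\{g,g+a\}$ forces $\supp(S)$ to be a single point (e.g. if $h=g-a$ then $g+a\notin\{h,h+a\}$ unless $\supp(S)=\{g\}$); and $g-h=\pm2a$ is impossible since $\ord(a)>2$ would still permit it only if $\supp(S)$ again collapses to one element. A short case check closes this, giving $\supp(T)=\{g\}$ and $T=g^{|T|}$.

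For part~3, the key observation is that $\Sigma_1(S)=\supp(S)$ and $\Sigma_1(T)=\supp(T)$, so the $i=1$ part of the hypothesis gives exactly the setup of part~2: we may write $S=g^s(g+a)^{|S|-s}$ and $T=g^{|T|}$ for some $g\in G$, $s\in[0,|S|]$. Replacing $S$ and $T$ by $(-g)S$-style translations is not literally allowed (sequences, not sets), but we can still reason directly: since $T=g^{|T|}$ and $|T|\ge2$, we have $\Sigma_2(T)=\{2g\}$, so the $i=2$ hypothesis reads $\Sigma_2(S)-2g\subset\{0,a\}$, i.e. $\Sigma_2(S)\subset\{2g,2g+a\}$. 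Now $\Sigma_2\bigl(g^s(g+a)^{|S|-s}\bigr)$ equals $\{2g\}$ if $|S|-s=0$, equals $\{2g,2g+a\}$ if $s\ge1$ and $|S|-s=1$, equals $\{2g+a,2g+2a\}$ if $s=0$ and $|S|-s\ge2$, and equals $\{2g,2g+a,2g+2a\}$ if $s\ge1$ and $|S|-s\ge2$. The constraint $\Sigma_2(S)\subset\{2g,2g+a\}$ rules out every case containing $2g+2a$, since $2g+2a\ne2g$ (as $\ord(a)>2$) and $2g+2a\ne2g+a$ (as $\ord(a)>2\Rightarrow a\ne0$). Thus either $|S|-s=0$, giving $S=g^{|S|}$, or $s=|S|-1$, giving $S=g^{|S|-1}(g+a)$. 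In both subcases $T=g^{|T|}$, which is the claimed conclusion.

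The main obstacle I anticipate is the case analysis in part~2 when $|\supp(T)|\ge2$: one must carefully rule out the possibility that $\supp(T)$ has two elements differing by $\pm a$ or $\pm2a$ while $\supp(S)$ stays inside a translate of $\{0,a\}$, and the role of the hypothesis $\ord(a)>2$ (which prevents $a=-a$ and $2a=0$) is exactly what makes these collapse. Part~3's arithmetic is then routine once part~2 is in hand, and part~1 is immediate. Throughout, the only subtlety is bookkeeping the difference between a sequence and its support, so that "$S=g^{s}(g+a)^{|S|-s}$" genuinely records the multiplicities rather than just the support.
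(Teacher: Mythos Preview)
Your argument is correct, but the route differs from the paper's. The paper dispatches parts~1 and~2 in one stroke via Kneser's Theorem: since $\ord(a)>2$, the set $\{0,a\}$ has trivial stabilizer, so $2\ge|\supp(S)-\supp(T)|\ge|\supp(S)|+|\supp(T)|-1$, forcing $|\supp(T)|=1$ and $|\supp(S)|\le2$ immediately. You instead do this by hand: fix $g\in\supp(T)$, deduce $\supp(S)\subset\{g,g+a\}$, and then if a second $h\in\supp(T)$ existed, intersect $\{g,g+a\}$ with $\{h,h+a\}$ to force $|\supp(S)|=1$, contradicting $|\supp(S)|\ge|\supp(T)|\ge2$. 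This elementary approach avoids quoting Kneser at the cost of the short case check you flagged; both are perfectly fine here, and yours is self-contained.

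For part~3, both you and the paper first invoke part~2 to get $T=g^{|T|}$ and $\supp(S)\subset\{g,g+a\}$, then use the $i=2$ constraint. The paper's version is a one-liner: if $(g+a)^2\mid S$ then $2a=(2g+2a)-2g\in\Sigma_2(S)-\Sigma_2(T)\subset\{0,a\}$, contradiction. Your full enumeration of $\Sigma_2(S)$ reaches the same conclusion; note that a couple of your listed sets are slightly off (e.g.\ when $s=0$ one has $\Sigma_2(S)=\{2g+2a\}$, not $\{2g+a,2g+2a\}$), but since the disqualifying element $2g+2a$ appears in every case with $|S|-s\ge2$, the conclusion $|S|-s\le1$ is unaffected.
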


\begin{proof}
Note that $\Sigma_1(S)=\supp(S)$ and that all hypotheses imply
$\supp(S)-\supp(T)\subset \{0,a\}$. Since $\ord(a)>2$, it follows
that $\{0,a\}$ contains no periodic subset, and thus Kneser's
Theorem (see e.g., \cite[Theorem 5.2.6]{Ge-HK06a}) implies that
\[
2\geq |\supp(S)-\supp(T)|\geq |\supp(S)|+|\supp(T)|-1 \,.
\]
Therefore we get $|\supp(S)|\leq 2$ and $|\supp(T)|=1$. Items 1 and
2 now easily follow. For the proof of part 3, we apply 2, and thus
we may assume that $\supp(S)\subset \{g,\,(g+a)\}$ and $T=g^{|T|}$.
Now if item 3 is false, then $(g+a)^2 \t S$, whence
$$2a=((g+a)+(g+a))-(g+g)\in\bigcup_{i=1}^{2}(\Sigma_i(S)-\Sigma_i(T))\subset
\{0,a\},$$ contradicting that $\ord(a)>2$.
\end{proof}

\medskip
\begin{lemma} \label{lem-basic-exchange-ii}
Let \ $G$ \ be an abelian group and let $S\in \Fc(G)$.
\begin{enumerate}
\item If $k \in [1,  |S|-1]$ and $|\Sigma_k(S)|\leq 2$, then $|\supp(S)|\leq 2$.

\smallskip
\item If $k \in [2, |S|-2]$ and $|\Sigma_k(S)|\leq 2$ and $\Sigma_k(S)$ is not a coset of a cardinality two subgroup,
      then either $S=g^{|S|}$ or $S=g^{|S|-1}h$, for some $g,\,h\in G$.

\smallskip
\item If $k \in [1, |S|-1]$ and $|\Sigma_k(S)|\leq 1$, then $S=g^{|S|}$  for some $g\in G$.
\end{enumerate}
\end{lemma}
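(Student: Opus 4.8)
I aim to prove Lemma \ref{lem-basic-exchange-ii} by exploiting the additive structure of $k$-term subsum sets, using Kneser's Theorem as the main engine (just as in the proof of Lemma \ref{lem-basic-exchange}). The three parts are ordered by increasing subtlety: part 3 is essentially trivial, part 1 reduces to a Kneser-type bound, and part 2 is the delicate one requiring a case analysis to rule out the ``dangerous'' periodic configuration.

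\smallskip
\emph{Part 3.} Suppose $k\in[1,|S|-1]$ and $|\Sigma_k(S)|\le 1$, say $\Sigma_k(S)=\{c\}$. Write $S=g_1\cdot\ldots\cdot g_l$ with $l=|S|$. Pick any index set $I\subset[1,l]$ with $|I|=k$; since $1\le k\le l-1$, for any $i\in I$ and $j\notin I$ we may swap $i$ for $j$ to get another $k$-subset, and comparing the two subsums gives $g_i=g_j$. Ranging over all such $i,j$ forces all of $g_1,\ldots,g_l$ equal, so $S=g^{|S|}$. (This is the standard swapping argument.)

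\smallskip
\emph{Part 1.} Suppose $k\in[1,|S|-1]$ and $|\Sigma_k(S)|\le 2$. The idea is that $\Sigma_k(S)$ contains a translate of $\Sigma_1(S')-\Sigma_1(S'')$-type differences built from $S$; more directly, fix $I$ with $|I|=k$ and for $i\in I$, $j\notin I$ consider $\sigma_I$ and $\sigma_I-g_i+g_j$. Both lie in $\Sigma_k(S)$, so $\Sigma_k(S)\supset \{\sigma_I\}+\{0\}\cup(\supp(T')-\supp(T''))$ for suitable subsequences, and one obtains that the difference set $D=\{g_j-g_i : i\in I,\ j\notin I\}$ satisfies $|D|\le |\Sigma_k(S)|-?$... cleaner: the set $\{g_i : i\in I\}+\{g_j : j\in [1,l]\setminus I\}$, after translating by $-\sigma_I$ and shifting, injects into $\Sigma_k(S)$, so by Kneser applied to this sumset, $2\ge |\Sigma_k(S)|\ge |\supp_I|+|\supp_{I^c}|-1$ unless there is periodicity; handling the periodic case (a coset of a nontrivial subgroup $H$ inside $\Sigma_k(S)$, so $|H|\le 2$) still forces each of the two supports to have size $1$ after accounting for $H$, giving $|\supp(S)|\le 2$. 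I expect to phrase this by choosing $I$ so that $\supp(S)$ is split as evenly as possible between $I$ and $I^c$, so that $|\supp(S)|\le 2$ follows.

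\smallskip
\emph{Part 2 (the main obstacle).} Now $k\in[2,|S|-2]$, $|\Sigma_k(S)|\le 2$, and $\Sigma_k(S)$ is \emph{not} a coset of an order-two subgroup. By part 1, $|\supp(S)|\le 2$, so $S=g^s h^{|S|-s}$ for some distinct $g,h$ (the case $\supp(S)=1$ being $S=g^{|S|}$, which is allowed). Assume $2\le s\le |S|-2$ (otherwise $S=g^{|S|-1}h$ or $S=g h^{|S|-1}$, both allowed). Then $\Sigma_k(S)=\{ig+(k-i)h : \max(0,k-(|S|-s))\le i\le \min(k,s)\}$; since $2\le k\le |S|-2$ and $2\le s\le|S|-2$, the index $i$ ranges over at least three consecutive values $i_0, i_0+1, i_0+2$, so $\Sigma_k(S)\supset\{i_0g+(k-i_0)h,\ (i_0{+}1)g+(k{-}i_0{-}1)h,\ (i_0{+}2)g+(k{-}i_0{-}2)h\}$. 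These three elements form an arithmetic progression with common difference $d=g-h\ne 0$ and lie in a set of size $\le 2$, so two of them coincide, forcing $2d=0$ or $d=0$; since $d\ne 0$ we get $2d=0$, i.e.\ $d$ has order $2$, and then $\Sigma_k(S)$ is exactly the two-element coset $\{i_0g+(k-i_0)h\}+\{0,d\}=\{i_0g+(k-i_0)h\}+\langle d\rangle$, a coset of an order-two subgroup --- contradicting the hypothesis. Hence $s\in\{0,1,|S|-1,|S|\}$ and the conclusion follows. The only thing requiring care is the bookkeeping of the index range for $i$ to guarantee three consecutive values when $2\le k\le|S|-2$; I expect this to be a short but slightly fiddly verification.
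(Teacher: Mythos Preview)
Your Parts 2 and 3 are correct and match the paper's proof almost exactly: Part 3 is the same swap argument, and Part 2 is the same three-term arithmetic progression in $g-h$ forcing $\ord(g-h)=2$. (The paper streamlines the index bookkeeping by using $\Sigma_{|S|-k}(S)=\sigma(S)-\Sigma_k(S)$ to reduce to $k\le |S|/2$, so that $s_1\ge k\ge 2$ immediately gives the three sums $kg,\ (k-1)g+h,\ (k-2)g+2h$; your direct range check $\min(k,s)-\max(0,k-(|S|-s))\ge 2$ is equally valid.)

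Your Part 1, however, is both overcomplicated and muddled compared to the paper, and as written it is not a proof. You speak of the \emph{sumset} $\{g_i:i\in I\}+\{g_j:j\notin I\}$ injecting into $\Sigma_k(S)$, but swapping gives $\sigma_I-g_i+g_j$, so the relevant object is the \emph{difference} set $\supp_{I^c}-\supp_I$; moreover ``injects'' is wrong (it is a containment of sets, not a map). Your handling of the periodic case in Kneser is just a hope (``still forces\ldots''), and the plan to choose $I$ splitting $\supp(S)$ evenly is left as an expectation. The paper avoids all of this with a one-line elementary argument: if $|\supp(S)|\ge 3$, pick distinct $x,y,z\in\supp(S)$; when $k=|S|-1$ use $\Sigma_{|S|-1}(S)=\sigma(S)-\supp(S)$, and otherwise take any $T\,|\,(xyz)^{-1}S$ with $|T|=k-1\le |S|-3$ and observe that $\{x,y,z\}+\sigma(T)\subset\Sigma_k(S)$ has three elements. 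No Kneser, no periodicity, no clever choice of $I$. I would replace your Part 1 sketch with this.
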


\begin{proof}
1. Assume to the contrary that $|\supp(S)|\geq 3$, and pick three
distinct elements $x, y, z \in \supp (S)$. If $k=|S|-1$, then
$\Sigma_{|S|-1}(S)=\sigma(S)-\Sigma_1(S)$ and hence
$|\Sigma_{|S|-1}(S)|=|\supp(S)|\geq 3$, a contradiction. Therefore $k \leq |S|-2$.
Let $T$ be a subsequence of $(xyz)^{-1}S$
of length $|T|=k-1\leq |S|-3$. Then $\{x,\,y,\,z\}+ \sigma (T)$ is a
cardinality three subset of $\Sigma_k (S)$, a contradiction.

\smallskip
2. By  1, we have $S=g^{s_1}h^{s_2}$, with $s_1, s_2 \in \mathbb
N_0$, $s_1 \ge s_2$  and $g,\,h\in G$ distinct. Assume to the
contrary that  $s_2\geq 2$. Since
$\Sigma_{|S|-k}(S)=\sigma(S)-\Sigma_k(S)$, it suffices to consider
the case $k\leq \frac{1}{2}|S|$, and thus we have $s_1 \geq
\frac{1}{2}|S| \geq k \ge 2$. Hence  the elements $kg$, $(k-1)g+h$
and $(k-2)g+2h$ are all contained in $\Sigma_k(S)$. Thus, since
$|\Sigma_k(S)|\leq 2$ and $g\neq h$, it follows $\ord(h-g)=2$ and
$\Sigma_k(S)=kg+\{0,h-g\}$, contradicting that $\Sigma_k(S)$ is not
a coset of a cardinality two subgroup.

\smallskip
3. If the conclusion is false, there are distinct $x,y\in G$ with
$xy|S$, and then $\{x,y\}+\sigma(S')$ is a cardinality two subset of
$\Sigma_k(S)$ for any $S'|(xy)^{-1}S$ with $0\leq |S'|=k-1\leq
|S|-2$.
\end{proof}

\bigskip
\section{On the Structure of $\varphi (S)$} \label{4}
\bigskip

\begin{definition} \label{4.1}
Let \ $G = C_{mn} \oplus C_{mn}$ \ with \ $m,\, n \ge 2$, let \ $S \in
\mathcal A (G)$ with $|S| = 2mn-1$, and let  $\varphi \colon G \to
G$ be the multiplication by $m$ homomorphism. Let
\[
\begin{aligned}
\Omega' (S) = \Omega' = \{ (W_0, \ldots, W_{2m-2}) \in \mathcal F
(G)^{2m-1} \mid & \ S = W_0 \cdot \ldots \cdot W_{2m-2}, \\ & \
\sigma (W_i ) \in \Ker ( \varphi) \ \text{and} \ |W_i| > 0 \
\text{for all} \ i \in [0, 2m-2] \}
\end{aligned}
\]
and
\[
\Omega (S) = \Omega = \{ (W_0, \ldots, W_{2m-2}) \in \Omega' \mid
|W_1| = \ldots = |W_{2m-2}| = n \} \,.
\]
The elements $(W_0, \ldots, W_{2m-2}) \in \Omega'(S)$ will be called
\ {\it product decompositions} \ of $S$. If $W\in \Omega'$, we
implicitly assume that $W=(W_0,\ldots,W_{2m-2})$.
\end{definition}
By Lemma \ref{2.5}, $\Omega \ne \emptyset$, and if $W \in \Omega$, then \ $\varphi (W_0), \ldots, \varphi
(W_{2m-2})$ \ are minimal zero-sum sequences over $\varphi (G)$. Proposition \ref{mainproposition} below shows that $\varphi(S)$ is highly structured. We will later in CLAIMS A, B and C of Section 5 (with much effort) show that this structure lifts to the original sequence $S$. As this lift will only be `near perfect' (there will be one exceptional term $x|S$ for which the structure is not shown to lift), we will then, in CLAIM D of Section 5, need Theorem \ref{ham-result} to finish the proof of the Theorem.

\medskip
\begin{proposition} \label{mainproposition}
Let \ $G = C_{mn} \oplus C_{mn}$ \ with \ $m,\, n \ge 2$, and
suppose that   \ $C_n \oplus C_n$ \ has Property {\bf B}. Let \ $S
\in \mathcal A (G)$ with $|S| = 2mn-1$, and  let \ $\varphi \colon G
\to G$ be the multiplication by $m$ homomorphism. Then there exist a
product decomposition $(W_0, \ldots, W_{2m-2})$ of $S$ and a basis
$(e_1, e_2)$ of $\varphi (G)$ such that \be\label{modulo-form}
\varphi (W_0) = e_1^{n-1} \prod_{\nu=1}^n (x_{\nu} e_1 + e_2) \quad
\text{and} \quad \varphi (W_i) \in \bigl\{ e_1^{n}, \
\prod_{\nu=1}^{n} (c_{i, \nu} e_1+e_2) \bigr\} \,, \ee where $x_1,
\ldots , x_n \in [0, n-1]$, $x_1 + \ldots + x_n \equiv 1 \mod n$,
all $c_{i, \nu} \in [0, n-1]$, and
$c_{i,1}+c_{i,2}+\ldots+c_{i,n}\equiv 0\mod n$ for all $i \in
[1,n]$. In particular,
$$
\varphi(S) = e_1^{\ell n-1} \prod_{\nu=1}^{2mn-\ell n} \left( x_{\nu} e_1
+ e_2 \right),$$
where $\ell \in [1, 2m-1]$ and
$x_{\nu} \in [0, n-1]$ for all $\nu \in [1, 2mn-\ell n]$.
\end{proposition}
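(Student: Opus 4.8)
\textbf{Proof plan for Proposition \ref{mainproposition}.}

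The plan is to start from a product decomposition $W\in\Omega(S)$, which exists by Lemma \ref{2.5}, and to analyze the images $\varphi(W_0),\varphi(W_1),\ldots,\varphi(W_{2m-2})$ in $\varphi(G)\cong C_n\oplus C_n$. By Lemma \ref{2.5}.1 each $\varphi(W_i)$ with $i\ge 1$ is a zero-sum sequence of length $n$ over $\varphi(G)$; being of length exactly $n$ with sum $0$, and since $0\notin\supp(\varphi(S))$, each such $\varphi(W_i)$ is in fact a \emph{minimal} zero-sum sequence of length $n$ over $C_n\oplus C_n$ (any proper zero-sum subsequence would, after adjusting the decomposition, produce a zero-sum subsequence of $\varphi(S)$ of length in $[1,n-1]$, contradicting Lemma \ref{2.5}.1). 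Likewise $\varphi(W_0)$ is a zero-sum sequence of length $2n-1$; I claim it is minimal, since a proper zero-sum subsequence $T\mid\varphi(W_0)$ with $|T|\le n$ would again violate Lemma \ref{2.5}.1, and one with $n<|T|<2n-1$ would split off, leaving a zero-sum sequence of length $<n$, impossible. Hence $\varphi(W_0)\in\mathcal A(\varphi(G))$ with $|\varphi(W_0)|=2n-1$.

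Next I invoke the hypothesis that $C_n\oplus C_n$ has Property \textbf{B}. By Lemma \ref{2.3}.1, since $\varphi(W_0)$ is a minimal zero-sum sequence of length $2n-1$, Property \textbf{B} gives $\varphi(W_0)\in\Upsilon(\varphi(G))$: there is a basis $(e_1,e_2)$ of $\varphi(G)$ and $x_1,\ldots,x_n\in[0,n-1]$ with $x_1+\cdots+x_n\equiv 1\bmod n$ and $\varphi(W_0)=e_1^{n-1}\prod_{\nu=1}^n(x_\nu e_1+e_2)$. Now fix this basis once and for all. The heart of the argument is to classify the minimal zero-sum sequences $\varphi(W_i)$, $i\ge 1$, of length $n$ over $C_n\oplus C_n$ in terms of this same basis, and to show that up to reordering the $W_i$ one can arrange $\varphi(W_i)\in\{e_1^n,\ \prod_\nu(c_{i,\nu}e_1+e_2)\}$. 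For this I would use the classical structure of minimal zero-sum sequences of length equal to the exponent: over $C_n\oplus C_n$, a minimal zero-sum sequence of length $n$ either is supported in a cyclic subgroup (forced then to be $g^n$ with $\ord g=n$) or has all its terms in distinct cosets of some order-$n$ cyclic subgroup. The constraint that links the different $W_i$ to each other and to $W_0$ is that together they multiply to $\varphi(S)$, which by Lemma \ref{2.3}.3 applied appropriately, or directly by examining $\varphi(W_0)$, already pins down a distinguished ``vertical'' direction $\langle e_1\rangle$: I would argue that the order-$n$ cyclic subgroup appearing for each non-$e_1^n$ block $W_i$ must be $\langle e_1\rangle$ (otherwise one produces, by combining blocks, a zero-sum subsequence of $\varphi(S)$ of length in $[1,n]$ not equal to $n$, or of length $n$ that overlaps two blocks in a way that contradicts that $\varphi(S)$ is not a product of $2m$ zero-sum sequences, cf.\ Lemma \ref{2.5}.1). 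This yields $\varphi(W_i)=\prod_{\nu=1}^n(c_{i,\nu}e_1+e_2)$ with $c_{i,1}+\cdots+c_{i,n}\equiv 0\bmod n$, or $\varphi(W_i)=e_1^n$, as claimed; reindex so this holds for all $i\in[1,2m-2]$.

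Finally, the ``in particular'' statement follows by simply multiplying the blocks together: each term of $\varphi(S)$ is either $e_1$ (contributing $n-1$ copies from $W_0$ plus $n$ copies from each $W_i$ of the first type) or of the form $ye_1+e_2$ with $y\in[0,n-1]$ (from $W_0$ and the $W_i$ of the second type). Writing $\ell$ for the number of $i\in[0,2m-2]$ whose block is of the ``$e_1^n$'' type plus one more for the $e_1^{n-1}$ part of $W_0$ — more precisely $\ell n-1=\vp_{e_1}(\varphi(S))$ where $\ell\in[1,2m-1]$ since $\vp_{e_1}(\varphi(S))\equiv n-1\bmod n$ and lies between $n-1$ and $(2m-1)n-1$ — we get $\varphi(S)=e_1^{\ell n-1}\prod_{\nu=1}^{2mn-\ell n}(x_\nu e_1+e_2)$ with all $x_\nu\in[0,n-1]$, which is the desired form. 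The main obstacle I anticipate is the middle step: proving that \emph{every} length-$n$ block $\varphi(W_i)$ is aligned to the single basis $(e_1,e_2)$ coming from $\varphi(W_0)$ — a priori each block only knows its own order-$n$ subgroup, and ruling out a mismatch requires carefully exploiting both parts of Lemma \ref{2.5}.1 (no decomposition into $2m$ zero-sum blocks, and every short zero-sum subsequence of $\varphi(S)$ has length exactly $n$) together with the EGZ-type rigidity from Theorem \ref{EGZ-thm} and the addition-theorem input.
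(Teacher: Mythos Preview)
Your setup (steps 1--4 and the ``in particular'' deduction) is correct and matches the paper. The gap is entirely in step 5, and it is substantial.

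First, the dichotomy you invoke --- that a minimal zero-sum sequence of length $n$ over $C_n\oplus C_n$ is either $g^n$ or has all terms in a single coset of some order-$n$ cyclic subgroup --- is not a classical result you can cite; the paper does not use or prove any such statement. Second, and more seriously, even if each $\varphi(W_i)$ individually had a nice form with respect to \emph{some} basis, the whole content of the proposition is that a \emph{single} basis works for all blocks simultaneously. Your one-sentence sketch (``otherwise one produces a short zero-sum subsequence \ldots'') is not an argument; the paper spends several pages on exactly this point. In fact your plan to ``fix this basis once and for all'' after applying Property \textbf{B} to $\varphi(W_0)$ provably fails: in the paper's CASE~1 (where $\varphi(W_0)=e_1^{n-1}e_2^{n-1}(e_1+e_2)$), blocks $\varphi(W_i)$ of the form $(e_1-e_2)^n$ or $e_1(e_1+e_2)^{n-2}(e_1+2e_2)$ genuinely arise, and these are \emph{not} of the form $e_1^n$ or $\prod_\nu(c_\nu e_1+e_2)$; only after a further incompatibility argument does one conclude that the basis $(e_1,e_2)$ or $(e_2,e_1)$ works globally.

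What the paper actually does for the alignment is quite different from your outline. It never analyzes $\varphi(W_i)$ in isolation. Instead, for each $i$ it studies the combined sequence $\varphi(W_0W_i)$ of length $3n-1$, and repeatedly uses the following trick: remove one or two terms to drop to length $3n-2$ or $3n-3$, apply Property \textbf{C} (Lemma~\ref{2.2}.2) to extract a zero-sum subsequence $T$ of length $n$, and observe that $\varphi(W_0W_i)T^{-1}$ is again a minimal zero-sum sequence of length $2n-1$ to which Property \textbf{B} and Lemma~\ref{2.3} apply. The argument then splits into two main cases according to whether $\varphi(W_0)\in\Upsilon_{nu}$ or $\Upsilon_u$, and within each, into several subcases governed by $\mathsf h\bigl(\varphi(W_i)\prod_\nu(x_\nu e_1+e_2)\bigr)$ or $\mathsf h\bigl(\varphi(W_0W_i)\bigr)$. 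This case analysis, not any a priori structure theorem for length-$n$ minimal zero-sum sequences, is what forces the coefficients of $e_2$ in $\varphi(W_i)$ to be constant.
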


\begin{proof}
If $n=2$, then it is easy to see (in view of Lemma \ref{2.5}) that
(\ref{modulo-form}) holds. From now on we assume that $n\ge 3$. We
distinguish two cases.

\medskip
\noindent CASE 1: \, For every product decomposition $W \in \Omega$, there exist distinct elements $g_1,\, g_2 \in
\varphi (G)$ such that $\mathsf v_{g_1} \bigl( \varphi (W_0) \bigr)
= \mathsf v_{g_2} \bigl( \varphi (W_0) \bigr) = n-1$.

Let us fix a product decomposition $W \in
\Omega$. By Lemma \ref{2.3}, there is a basis $(e_1, e_2')$ of
$\varphi (G)$ such that
\[
\varphi (W_0) = e_1^{n-1} \prod_{\nu=1}^n (x_{\nu} e_1 + e_2')
\]
where $x_1, \ldots , x_n \in [0, n-1]$ and$x_1 + \ldots + x_n
\equiv 1 \mod n$. Thus, by assumption of CASE 1, it follows that
\[
\varphi (W_0) = e_1^{n-1} (x e_1+e_2')^{n-1} \bigl( (1+x)e_1+e_2'
\bigr) \quad \text{with} \quad x \in [0, n-1] \,.
\]
As a result,
\[
(e_1, e_2) = (e_1, xe_1+e_2') = (e_1, e_2')\cdot
            \left(
            \begin{matrix}
            1 & x \\
            0 & 1
            \end{matrix}
            \right)
\]
is a basis of $\varphi (G)$ and
\[
\varphi (W_0) = e_1^{n-1} e_2^{n-1} (e_1+e_2) \,.
\]
We continue with the following assertion.

\begin{enumerate}
\item[{\bf A.}\,] For every $i \in [1,2m-2]$, $\varphi (W_i)$ has one of the
                  following forms:
                  \[
                  e_1^n, e_2^n, (e_1+e_2)^n, (-e_1+e_2)^n,
                  (e_1-e_2)^n, e_1(e_1+e_2)^{n-2}(e_1+2e_2),
                  e_2(e_1+e_2)^{n-2}(2e_1+e_2) \,.
                  \]
\end{enumerate}

\smallskip
\noindent Suppose that \,{\bf A}\, is proved. If the two forms
$(e_1-e_2)^n$ and $e_1(e_1+e_2)^{n-2}(e_1+2e_2)$ do not occur, then
$\varphi (W_i)$ has the required form with basis $(e_1, e_2)$. If
the two forms $(-e_1+e_2)^n$ and $e_2(e_1+e_2)^{n-2}(2e_1+e_2)$ do
not occur, then $\varphi (W_i)$ has the required form with basis
$(e_2, e_1)$. Thus by symmetry,  it remains to verify that there are
no distinct $i, j \in [1, 2m-2]$ such that
\begin{itemize}
\item[(i)] $\varphi (W_i) = e_1(e_1+e_2)^{n-2}(e_1+2e_2)$ and $\varphi (W_j) =
           e_2(e_1+e_2)^{n-2}(2e_1+e_2)$,

\item[(ii)] $\varphi (W_i) = e_1(e_1+e_2)^{n-2}(e_1+2e_2)$ and $\varphi (W_j) =
            (-e_1+e_2)^n$, or

\item[(iii)] $\varphi (W_i) = (e_1-e_2)^n$ and $\varphi (W_j) = (-e_1+e_2)^n$.
\end{itemize}
Indeed, if (i) held, then
$(2e_1+e_2)(e_1+2e_2)(e_1+e_2)^{n-3}$ would be a zero-sum
subsequence of $\varphi (W_iW_j)$ of length $n-1$, contradicting
Lemma \ref{2.5}. If (ii) held, then
$(-e_1+e_2)(e_1+2e_2)e_2^{n-3}$ would be a zero-sum subsequence of
$\varphi (W_0W_iW_j)$ of length $n-1$, contradicting Lemma
\ref{2.5}. Finally, if (iii) held, then $(e_1-e_2)(-e_1+e_2)$ would
be a zero-sum subsequence of $\varphi (W_iW_j)$ of length $2$, also
contradicting  Lemma \ref{2.5}. Thus it remains to establish \textbf{A} to complete the case.
To that end, let $i \in [1, 2m-2]$ be arbitrary. Then
$\mathsf h \bigl( \varphi(W_0 W_i) \bigr) \ge n-1$, and we
distinguish three subcases.

\medskip
\noindent CASE 1.1: \,$\mathsf h \bigl( \varphi(W_0 W_i) \bigr) >n$.

Then $\mathsf v_g \bigl( \varphi(W_0 W_i) \bigr) > n$ for some $g
\in \{e_1, e_2, e_1+e_2 \}$. If $g = e_1+e_2$, then $\varphi (W_i) =
(e_1+e_2)^n$. Now suppose that $g \in \{e_1, e_2 \}$, say $g = e_1$.
Then
\[
\varphi( W_0 W_i) =  e_2^{n-1} (e_1+e_2) e_1^n \prod_{\nu=1}^{n-1}
(c_{\nu} e_1 + d_{\nu} e_2) \,,
\]
where $c_{\nu}, d_{\nu} \in [0, n-1]$ for all $\nu \in [1, n-1]$, and $c_{\nu}=1$ and $d_{\nu}=0$ for some $\nu\in [1,n-1]$. By
Lemma \ref{2.5},
\[
W_0' =  e_2^{n-1} (e_1+e_2) \prod_{\nu=1}^{n-1} (c_{\nu} e_1 + d_{\nu}
e_2)
\]
is a minimal zero-sum subsequence of $\varphi (S)$. Since $W'$ contains two distinct elements with
multiplicity $n-1$ (by
assumption of CASE 1), and since $e_1|W'_0$, it follows that either
\[
W_0' = e_1^{n-1} e_2^{n-1} (e_1+e_2) \quad \text{or} \quad W_0' = e_1
e_2^{n-1} (e_1+e_2)^{n-1} \,.
\]
But in the second case, we would get $\sigma (W'_0) = - 2e_2 \ne 0$.
Thus $W_0' = e_1^{n-1}e_2^{n-1}(e_1+e_2)$ and $\varphi (W_i) = e_1^n$.

\medskip
\noindent CASE 1.2: \,$\mathsf h \bigl( \varphi(W_0 W_i) \bigr) =
n$.
We distinguish two further subcases.

\medskip
\noindent CASE 1.2.1: \,$\varphi (W_i) = g^n$ for some $g \in
\varphi (G) \setminus \{e_1, e_2, e_1+e_2 \}$.

We set $g = ce_1 + d e_2$ with $c, d \in [0, n-1]$.
 By Lemmas
\ref{2.2} and  \ref{2.5}, it follows that $\varphi (W_0) g^{n-1}$ has a zero
subsequence $T$ of length $|T|=n$ and that $\varphi( W_i W_0) T^{-1}$ is
a minimal zero-sum subsequence of $\varphi (S)$ of length $2n-1$,
say
\[
\varphi (W_i W_0)  T^{-1} = e_2^q e_1^r (e_1+e_2)^s (c e_1+de_2)^t,
\]
where $q\ge 1$, $r\ge 1$, $s\ge 0$ and $t \in [1, n-1]$.

Since $g \ne e_1+e_2$, we infer that $s\le 1.$ If $s=1$, then, by
assumption of CASE 1, we get
\[
2n-1 = |W_i W_0 T^{-1}| =  q+r+s+t\ge 1+(q+r+t)\ge
1+(n-1+n-1+1)>2n-1 \,,
\]
a contradiction. Hence $s=0$.  Again, by assumption of CASE 1, we
have the following possibilities:
\begin{itemize}
\item $q=r=n-1$ and $t=1$.

\item $q=t=n-1$ and $r=1$.

\item $q=1$ and  $r=t=n-1$.
\end{itemize}
If  $q=r=n-1$ and $t=1$, then $\sigma \bigl( \varphi(W_0W_i) T^{-1}
\bigr) = 0$ implies that $g = e_1+e_2$, a contradiction.
If  $q=t=n-1$ and  $r=1$, then $\sigma \bigl( (W_0W_i) T^{-1} \bigr)
= 0$ implies that $g = e_1-e_2$ and $\varphi (W_i) = (e_1-e_2)^n$.
Finally, if  $q=1$ and  $r=t=n-1$, then $\sigma \bigl( \varphi(W_0W_i) T^{-1}
\bigr) = 0$ implies that $g = -e_1+e_2$ and $\varphi (W_i) =
(-e_1+e_2)^n$.

\medskip
\noindent CASE 1.2.2: \,$\mathsf v_g \bigl( \varphi(W_0 W_i) \bigr)
= n$ for some $g \in \{e_1, e_2, e_1+e_2\}$.

Since $|W_i| = n$, $\sigma(\varphi(W_i))=0$ and $\mathsf v_{e_1+e_2} \bigl( \varphi(W_0)
\bigr) = 1$, it follows that $g \ne e_1+e_2$. Thus $g \in \{e_1,
e_2\}$, say $g = e_1$. Then
\[
\varphi (W_0 W_i) = e_2^{n-1} (e_1+e_2) e_1^n \prod_{\nu=1}^{n-1}
(c_{\nu} e_1 + d_{\nu} e_2),
\]
where $c_{\nu}, d_{\nu} \in [0, n-1]$ for all $\nu \in [1, n-1]$. By
Lemma \ref{2.5} and the assumption of CASE 1.2,
\[
W_0' =  e_2^{n-1} (e_1+e_2) \prod_{\nu=1}^{n-1} (c_{\nu} e_1 + d_{\nu}
e_2)
\]
is a minimal zero-sum subsequence of $\varphi (S)$ with $e_1\nmid W'_0$. Since $W'$ contains two distinct elements with
multiplicity $n-1$ (by the
assumption of CASE 1), since $\sigma(\varphi(W_i))=0$, and since $e_1\nmid W'_0$, it follows that
\[
W_0' = e_2^{n-1} (e_1+e_2)^{n-1} (e_1+2e_2),
\]
and thus
\[
\varphi (W_i) = e_1 (e_1+e_2)^{n-2} (e_1+2e_2) \,.
\]

\medskip
\noindent CASE 1.3: \,$\mathsf h \bigl( \varphi( W_0 W_i ) \bigr) =
n-1$.

Since $\sigma(\varphi(W_i))=0$, it follows
$\vp_g(\varphi(W_0W_i))\neq n-1$ for $g\notin \{e_1,e_2,e_1+e_2\}$.
Suppose $\mathsf v_{e_1+e_2}( \varphi(W_0W_i) )  = n-1$. Then
\[
\varphi (W_i) = (e_1+e_2)^{n-2}(c_1e_1+d_1e_1)(c_2e_1+d_2e_2),
\]
where $c_1, d_1, c_2, d_2 \in [0, n-1]$. By Lemmas \ref{2.2} and
\ref{2.5} and the definition of Property \textbf{C},
\[
\varphi (W_0 W_i) (e_1+e_2)^{-1} (c_2e_1+d_2e_2)^{-1}
\]
has a zero-sum subsequence $T$ of length $|T| = n$ and $\varphi (W_0
W_i) T^{-1}$ is a minimal zero-sum subsequence of $\varphi (S)$ of
length $2n-1$. Thus it follows, in view of the assumptions of CASE 1 and CASE 1.3, and in view of
$$\varphi(W_0W_i)=e_1^{n-1}e_2^{n-1}(e_1+e_2)^{n-1}(c_1e_1+d_1e_2)(c_2e_1+d_2e_2),$$ that $\mathsf h (T)=n-1$, contradicting that $\sigma(T)=0$.
So we conclude that

\be\label{piip}\mathsf v_{g}\bigl( \varphi(W_0 W_i) \bigr) < n-1
\quad \text{for all} \quad g \in \varphi (G) \setminus \{e_1, e_2\}
\,. \ee

We set $\varphi (W_i) = \prod_{\nu=1}^n (c_{\nu} e_1 + d_{\nu} e_2)$,
where $c_{\nu}, d_{\nu} \in [0, n-1]$ for all $\nu \in [1,n]$, and
pick some $\lambda \in [1,n]$.  By Lemmas \ref{2.2} and  \ref{2.5}, it follows that
$\varphi (W_0 W_i) (c_{\lambda} e_1 + d_{\lambda} e_2)^{-1}$ has a
zero-sum subsequence $T$ of length $|T| = n$ and that $\varphi (W_i W_0)
T^{-1}$ is a minimal zero-sum subsequence of $\varphi (S)$ of length
$2n-1$. By assumption of CASE 1 and (\ref{piip}), it follows that
\[
\varphi (W_0 W_i) T^{-1} = e_1^{n-1} e_2^{n-1} (e_1+e_2),
\]
and thus $c_{\lambda} e_1 + d_{\lambda} e_2 = e_1+e_2$. As $\lambda\in [1,n]$ was arbitrary, this implies
that $\varphi (W_i) = (e_1+e_2)^n$, contradicting the hypothesis of CASE 1.3.

\medskip
\noindent CASE 2: \, There exists a product decomposition $W \in \Omega$ such that $\mathsf v_g \bigl( \varphi
(W_0) \bigr) = n-1$ for exactly one element $g \in \varphi (G)$.

\medskip
By Lemma \ref{2.3} and the assumption of CASE 2, there exists a basis $(e_1, e_2)$ of $\varphi
(G)$ such that
\[
\varphi (W_0) = e_1^{n-1} \prod_{\nu=1}^n (x_{\nu} e_1 + e_2),
\]
where $x_1, \ldots , x_n \in [0, n-1]$ and $x_1 + \ldots + x_n
\equiv 1 \mod n$ and at most $n-2$ of the elements $x_1, \ldots,
x_n$ are equal. Let $i \in [1, 2m-2]$ be arbitrary, and let $\varphi (W_i) =
\prod_{\nu=1}^n (c_{\nu}e_1 + d_{\nu}e_2)$, where $c_{\nu}, d_{\nu}
\in [0, n-1]$ for all $\nu \in [1,n]$. We proceed to show that there
exists $m_i \in \{0,n\}$ such that
\[
\varphi (W_i) = e_1^{m_i} \prod_{\nu=1}^{n-m_i}(c_{\nu}e_1 + e_2),
\]
which will complete the proof. We  distinguish six subcases.

\medskip
\noindent CASE 2.1: \,$\mathsf h \bigl( \varphi (W_i)
\prod_{\nu=1}^n (x_{\nu}e_1 + e_2) \bigr) > n$.

Then there exists some $x \in [0, n-1]$ such that (after renumbering
if necessary)
\[
\varphi (W_i) \prod_{\nu=1}^n (x_{\nu} e_1 + e_2) = (x e_1 + e_2)^n
\prod_{\nu=1}^r (c_{\nu} e_1 + d_{\nu} e_2) \prod_{\nu=1}^s(x_{\nu}
e_1 + e_2),
\]
where $r \in [1, n-1]$, $s \in [2, n-1]$ and $r+s=n$. Since
\[
e_1^{n-1}\prod_{\nu=1}^r (c_{\nu} e_1 + d_{\nu} e_2)
\prod_{\nu=1}^s(x_{\nu} e_1 + e_2)
\]
is a minimal zero-sum subsequence of $\varphi (S)$, Lemma \ref{2.3}
implies that $d_1 = \ldots = d_r = 1$, whence $\varphi (W_i) =
\prod_{\nu=1}^n (c_{\nu} e_1 + e_2)$.

\medskip
\noindent CASE 2.2: \,$\mathsf h \bigl( \varphi (W_i)
\prod_{\nu=1}^n (x_{\nu}e_1 + e_2) \bigr)=n$.

If $(c_1, d_1) = \ldots = (c_n, d_n)$ does not hold, then, similar
to CASE 2.1, we obtain that $d_1 = \ldots = d_n = 1$. Therefore
$c_1 = \ldots = c_n = c$ and $d_1 = \ldots = d_n = d$ for some
$c,d \in [0, n-1]$.

Pick some $\lambda \in [1, n]$. By Lemmas \ref{2.2} and \ref{2.5}, the definition of Property \textbf{C}, and the assumption of CASE 2,
\[
\varphi (W_0 W_i) (x_{\lambda} e_1 + e_2)^{-1} (c e_1 + de_2)^{-1} =
(c e_1 + de_2)^{n-1} e_1^{n-1} \prod_{\nu \in [1,n] \setminus
\{\lambda\}} (x_{\nu} e_1 + e_2)
\]
has a zero-sum subsequence $T$ of length $n$ and
\[
\varphi (W_0 W_i) T^{-1}
\]
is a minimal zero-sum subsequence of $\varphi (S)$ of length $2n-1$.
Since $\varphi (G)$ has Property {\bf B}, we have either
\[
e_1^{n-1} \t \varphi (W_0 W_i) T^{-1} \quad \text{or} \quad (ce_1 +
de_2)^{n-1} \t \varphi (W_0 W_i) T^{-1} \,.
\]
If $e_1^{n-1} \t \varphi (W_0 W_i) T^{-1}$, then, since
$(x_{\lambda} e_1 + e_2)(ce_1+de_2) \t \varphi (W_0 W_i) T^{-1}$, it
would follow that $d=1$, whence $\varphi (W_i) = (ce_1+e_2)^n$, as desired. Therefore $(ce_1+de_2)^{n-1} \t \varphi (W_0 W_i) T^{-1}$.

Since
$\varphi (W_i)$ is a minimal zero-sum sequence, it follows that
\[
n = \ord (ce_1+de_2) = \frac{n}{\gcd (c, d, n)} \,,
\]
and hence there are $u, v \in \mathbb Z$ such that $uc + vd \equiv 1
\mod n$. Thus
\[
(e_1', e_2') = (ce_1 + de_2, -v e_1+ue_2) = (e_1, e_2)\cdot
            \left(
            \begin{matrix}
            c & -v \\
            d & u
            \end{matrix}
            \right)
\]
is a basis of $\varphi (G)$ and, for some sequence $Q$ over $\varphi
(G)$,
\[
\begin{aligned}
\varphi (W_0 W_i) T^{-1} & = (ce_1+de_2)^{n-1} e_1 (x_{\lambda} e_1
+ e_2)
Q \\
 & = {e_1'}^{n-1} (u e_1' - de_2') \bigl( (x_{\lambda}u+v)e_1' +
 (c-x_{\lambda} d) e_2' \bigr) Q \,.
\end{aligned}
\]
Now Lemma \ref{2.3} implies that $-d \equiv c - x_{\lambda} d\mod n$, whence
$x_{\lambda} d \equiv c+d\mod n$. Therefore, since $\lambda$ was arbitrary, we get
\[
d \equiv \sum_{\nu=1}^n x_{\nu} d \equiv n(c+d) \equiv 0 \mod n,
\]
and thus $d=0$. If $c \in [2,n]$, then $(ce_1)e_1^{n-c}$ is a
zero-sum subsequence of $\varphi (S)$ of length $n-c+1<n$, a
contradiction. Thus $c=1$ and $\varphi (W_i) = e_1^n$.

\medskip
\noindent CASE 2.3: \,$\mathsf h \bigl( \varphi (W_i)
\prod_{\nu=1}^n (x_{\nu}e_1 + e_2) \bigr)=n-1$ and $\mathsf v_{e_1}
\bigl( \varphi (W_i) \bigr)\ge 2.$

After renumbering if necessary, we have
\[
\varphi (W_0 W_i) = e_1^{n+1} (xe_1 + e_2)^{n-1}
\prod_{\nu=1}^r(x_{\nu} e_1 + e_2) \prod_{\nu=1}^s (c_{\nu} e_1 +
d_{\nu} e_2)
\]
where $x \in [0,n-1], r \in [1, n-1], s \in [1, n-2]$ and $r+s =
n-1$. By Lemma \ref{2.5},
\[
W' = e_1 (xe_1 + e_2)^{n-1} \prod_{\nu=1}^r(x_{\nu} e_1 + e_2)
\prod_{\nu=1}^s (c_{\nu} e_1 + d_{\nu} e_2)
\]
is a minimal zero-sum subsequence of $\varphi (S)$ of length $2n-1$.
Since
\[
(e_1, e_2') = (e_1, xe_1+e_2) = (e_1, e_2)\cdot
            \left(
            \begin{matrix}
            1 & x \\
            0 & 1
            \end{matrix}
            \right)
\]
is a basis of $\varphi (G)$ and
\[
W' = e_1 {e_2'}^{n-1} \prod_{\nu=1}^r \bigl( (x_{\nu}-x)e_1 +
e_2'\bigr) \prod_{\nu=1}^s \bigl( (c_{\nu} - x d_{\nu})e_1 + d_{\nu}
e_2' \bigr) \,,
\]
Lemma \ref{2.3} implies that $x_{\nu} - x \equiv 1\mod n$ for all $\nu \in
[1,r]$. Therefore we get $(n-r)x+ r(x+1) \equiv \sum_{\nu=1}^n x_{\nu}
\equiv 1 \mod n$. Hence $r=1$ and
\[
\varphi (W_0) = e_1^{n-1} (x e_1+e_2)^{n-1} \bigl( (x+1)e_1+e_2
\bigr) \,,
\]
a contradiction to our assumption on $x_1, \ldots, x_n$ for CASE 2.

\medskip
\noindent CASE 2.4: \,$\mathsf h \bigl( \varphi (W_i)
\prod_{\nu=1}^n (x_{\nu}e_1 + e_2) \bigr)=n-1$ and $\mathsf v_{e_1}(
W_i )=1.$

After renumbering if necessary, we get
\[
\varphi (W_0 W_i) = e_1^n (xe_1+e_2)^{n-1} \prod_{\nu=1}^r
(x_{\nu}e_1+e_2) \prod_{\nu=1}^s (c_{\nu} e_1 + d_{\nu} e_2)
\]
with $x \in [0, n-1]$, $r \in [1,n-1]$, $s \in [1,n-1]$ and $r+s=n$.
By Lemma \ref{2.5},
\[
W' = (xe_1+e_2)^{n-1} \prod_{\nu=1}^r (x_{\nu}e_1+e_2)
\prod_{\nu=1}^s (c_{\nu} e_1 + d_{\nu} e_2)
\]
is a minimal zero-sum subsequence of $\varphi (S)$ of length $2n-1$.
Since
\[
(e_1, e_2') = (e_1, xe_1+e_2) = (e_1, e_2)\cdot
            \left(
            \begin{matrix}
            1 & x \\
            0 & 1
            \end{matrix}
            \right)
\]
is a basis of $\varphi (G)$ and
\[
W' = {e_2'}^{n-1} \prod_{\nu=1}^r \bigl( (x_{\nu}-x)e_1 + e_2'\bigr)
\prod_{\nu=1}^s \bigl( (c_{\nu} - x d_{\nu})e_1 + d_{\nu} e_2'
\bigr) \,,
\]
Lemma \ref{2.3} implies that
\be\label{starrthing}
x_1-x \equiv \ldots \equiv x_r-x\equiv c_1-xd_1 \equiv \ldots \equiv c_s-xd_s \mod n.\ee
If $d_1 = \ldots = d_s = 1$, then $\varphi (W_i) = \prod_{\nu=1}^n
(c_{\nu}e_1+e_2)$, as desired. Therefore there is some $\nu \in [1,s]$ with
$d_{\nu} \ne 1$, say $\nu = s$. Hence, since $\sigma(W_i)=0$, it follows that there is also another $\nu'\in [1,s]$ with $d_{\nu'}\neq 1$ and $s=\nu\neq \nu'$. Thus, by Lemmas \ref{2.2} and {2.5} and the definition of Property \textbf{C},
\[
\varphi (W_0 W_i) e_1^{-1} (c_se_1+d_se_2)^{-1}
\]
has a zero-sum subsequence $T$ of length $|T| = n$ and $\varphi (W_0
W_i)T^{-1}$ is a minimal zero-sum subsequence of $\varphi (S)$ of
length $2n-1$. Since $\varphi (G)$ has Property {\bf B}, it follows
that either
\[
e_1^{n-1} \t \varphi (W_0 W_i) T^{-1} \quad \text{or} \quad
(xe_1+e_2)^{n-1} \t \varphi (W_0 W_i) T^{-1} \,.
\]
If $e_1^{n-1} \t \varphi (W_0 W_i) T^{-1}$, then,  since
$(c_se_1+d_se_2) \t \varphi (W_0 W_i) T^{-1}$ and $(x_{j}e_1 +
e_2) \t \varphi (W_0 W_i) T^{-1}$ for some $j \in [1,n]$, Lemma
\ref{2.3} implies that $d_s=1$, a contradiction. Therefore
$(xe_1+e_2)^{n-1} \t \varphi (W_0 W_i) T^{-1}$. Thus, for some
sequence $Q$ over $\varphi (G)$, we have
\[
\varphi (W_0 W_i)T^{-1} = (xe_1+e_2)^{n-1} e_1 (c_se_1+d_se_2)Q \,.
\]
Since
\[
(e_1, e_2') = (e_1, xe_1+e_2) = (e_1, e_2)\cdot
            \left(
            \begin{matrix}
            1 & x \\
            0 & 1
            \end{matrix}
            \right)
\]
is a basis of $\varphi (G)$ and
\[
\varphi (W_0 W_i)T^{-1} = e_1 {e_2'}^{n-1} \bigl( (c_s-xd_s)e_1 +
d_se_2' \bigr) Q \,,
\]
Lemma \ref{2.3} implies that $c_s - xd_s = 1$. Thus it follows from
$(\ref{starrthing})$ that $x_1 \equiv \ldots \equiv x_r \equiv x+1\mod n$. Therefore we get $(n-r)x+
r(x+1) \equiv \sum_{\nu=1}^n x_{\nu} \equiv 1 \mod n$. Hence $r=1$ and
\[
\varphi (W_0) = e_1^{n-1} (x e_1+e_2)^{n-1} \bigl( (x+1)e_1+e_2
\bigr) \,,
\]
a contradiction to our assumption on $x_1, \ldots, x_n$ for CASE 2.

\medskip
\noindent CASE 2.5: \,$\mathsf h \bigl( \varphi (W_i)
\prod_{\nu=1}^n (x_{\nu}e_1 + e_2) \bigr)=n-1$ and $\mathsf v_{e_1}
\bigl( \varphi (W_i ) \bigr) = 0$.

If $d_1= \ldots = d_n=1$, then the assertion follows. Therefore there
is some $\nu \in [1,n]$ with $d_{\nu} \ne 1$, say $\nu = n$. Since
$d_1 + \ldots + d_n \equiv 0 \mod n$, we may also assume that
$d_{n-1} \ne 1$. We distinguish two subcases.

\medskip
\noindent CASE 2.5.1: \,$\varphi (W_i) \prod_{\nu=1}^n (x_{\nu}e_1 +
e_2)$ contains two distinct elements with multiplicity $n-1$, say
$xe_1+e_2$ and $ye_1+e_2$, where $x,\,y \in [0, n-1]$.

Then
\[
\varphi (W_i) =
(xe_1+e_2)^r(ye_1+e_2)^s(c_{n-1}e_1+d_{n-1}e_2)(c_ne_1+d_ne_2)
\]
and
\[
\prod_{\nu=1}^n (x_{\nu}e_1 + e_2) =
(xe_1+e_2)^{n-1-r}(ye_1+e_2)^{n-1-s},
\]
where $r,\,s \in [1,n-3]$ and $r+s=n-2\ge2$. By Lemmas \ref{2.2} and
\ref{2.5}, $\varphi (W_0 W_i) (c_ne_1+d_ne_2)^{-1}$ has a zero-sum
subsequence $T$ of length $|T| = n$ and $\varphi (W_0 W_i)T^{-1}$ is
a minimal zero-sum subsequence of $\varphi (S)$ of length $2n-1$.
Since $\varphi (G)$ has Property {\bf B}, it follows that
\[
\mathsf v_g \bigl( \varphi( W_i W_0)T^{-1} \bigr) = n-1 \quad
\text{for some} \quad g \in \{e_1, xe_1+e_2, ye_1+e_2\} \,.
\]
Clearly, we have
\[
e_1 (xe_1+e_2)(ye_1+e_2)(c_ne_1+d_ne_2) \t \varphi (W_0 W_i) T^{-1}
\,.
\]
Since $d_n \ne 1$, Lemma \ref{2.3} implies that $g \ne e_1$. Thus w.l.o.g. $g = xe_1+e_2$. Consequently, for some
sequence $Q$ over $\varphi (G)$, we have
\[
\varphi (W_0 W_i) T^{-1} = (xe_1+e_2)^{n-1}e_1 (ye_1+e_2) Q \,.
\]
As before,
\[
(e_1, e_2') = (e_1, xe_1+e_2) = (e_1, e_2)\cdot
            \left(
            \begin{matrix}
            1 & x \\
            0 & 1
            \end{matrix}
            \right)
\]
is a basis of $\varphi (G)$ and
\[
\varphi (W_0 W_i)T^{-1} = {e_2'}^{n-1}e_1 \bigl( (y-x)e_1+e_2') Q
\,.
\]
Now we obtain a contradiction as in CASE 2.3.

\medskip
\noindent CASE 2.5.2: \,$\varphi (W_i) \prod_{\nu=1}^n (x_{\nu}e_1 +
e_2)$ contains exactly one element with multiplicity $n-1$, say
$xe_1+e_2$ where $x \in [0, n-1]$.

After renumbering if necessary, we get
\[
\varphi (W_0 W_i) = e_1^{n-1}(xe_1+e_2)^{n-1}
\prod_{\nu=1}^r(c_{\nu}e_1+d_{\nu}e_2)
\prod_{\nu=1}^s(x_{\nu}e_1+e_2),
\]
where $r \in [1, n-1], s \in [2, n-1]$ and $r+s=n+1$. If $d_1 =
\ldots = d_r = 1$, then the assertion follows. So after renumbering
again, we suppose that $d_r \ne 1$. Let $\lambda \in
[1,s]$.

By Lemmas \ref{2.2} and \ref{2.5}, the definition of Property \textbf{C}, and the assumption of CASE 2.5.2,
\[
\varphi (W_0 W_i) (c_re_1+d_re_2)^{-1} (x_{\lambda}e_1+e_2)^{-1}
\]
has a zero-sum subsequence $T$ of length $|T| = n$ and $\varphi (W_0
W_i)T^{-1}$ is a minimal zero-sum subsequence of $\varphi (S)$ of
length $2n-1$. Since $\varphi (G)$ has Property {\bf B}, it follows
that
\[
\mathsf v_g \bigl( \varphi (W_0 W_i)T^{-1} \bigr) = n-1 \quad
\text{for some} \quad g \in \{e_1, xe_1+e_2, \} \,.
\]
Clearly, we have
\[
e_1 (xe_1+e_2)(c_re_1+d_re_2)(x_{\lambda}e_1+e_2) \t \varphi (W_0
W_i) T^{-1} \,.
\]
Since $d_r \ne 1$, Lemma \ref{2.3} implies that $g \ne e_1$, and
hence $g = xe_1+e_2$. Thus, for some sequence $Q$ over $\varphi
(G)$, we have
\[
\varphi (W_0 W_i) T^{-1} = (xe_1+e_2)^{n-1}e_1(x_{\lambda}e_1+e_2)Q
\,.
\]
As before,
\[
(e_1, e_2') = (e_1, xe_1+e_2) = (e_1, e_2)\cdot
            \left(
            \begin{matrix}
            1 & x \\
            0 & 1
            \end{matrix}
            \right)
\]
is a basis of $\varphi (G)$ and
\[
\varphi (W_0 W_i) T^{-1} = {e_2'}^{n-1}e_1 \bigl(
(x_{\lambda}-x)e_1+e_2') Q \,.
\]
Hence Lemma \ref{2.3} implies that $1 \equiv x_{\lambda}-x\mod n$.
As $\lambda\in [1,s]$ was arbitrary, it follows that $x_1\equiv
\ldots \equiv x_s \equiv x+1 \mod n$, and, as in CASE 2.3, we obtain
a contradiction.

\medskip
\noindent CASE 2.6: \,$\mathsf h \bigl( \varphi (W_i)
\prod_{\nu=1}^n (x_{\nu}e_1 + e_2) \bigr) < n-1$.

Let $\lambda \in [1, n]$ be arbitrary.
By
Lemmas \ref{2.2} and \ref{2.5},
\[
\varphi (W_0 W_i) (c_{\lambda} e_1 + d_{\lambda}e_2)^{-1}
\]
has a zero-sum subsequence $T$ of length $|T| = n$, and
\[
\varphi (W_0 W_i) T^{-1}
\]
is a minimal zero-sum subsequence of $\varphi (S)$ of length $2n-1$.
Since $\varphi (G)$ has Property {\bf B}, it follows that
$e_1^{n-1}$ divides $\varphi (W_0 W_i) T^{-1}$. Furthermore there is
some $\nu \in [1,n]$ such that
\[
(x_{\nu} e_1 + e_2)(c_{\lambda} e_1 + d_{\lambda} e_2)|\varphi (W_0 W_i) T^{-1} \,.
\]
Thus Lemma \ref{2.5} implies that either $d_{\lambda} = 1$ or
$(c_{\lambda}, d_{\lambda}) = (1,0)$. Thus, since $\lambda\in [1,n]$ was arbitrary and $\sigma(\varphi(W_i))=0$, we must either have $d_\lambda=1$ for all $\lambda\in [1, n]$ or  $(c_{\lambda},
d_{\lambda}) = (1,0)$ for all $\lambda\in [1, n]$, and so either $\varphi (W_i) = e_1^n$ or $\varphi(W_i)=\prod_{\nu=1}^{n}(c_{\nu}e_1+e_2)$, as desired
\end{proof}

\bigskip
\section{Proof of the Theorem} \label{5}
\bigskip

Let $G = C_{mn} \oplus C_{mn}$, with $m,\, n \geq 3$ odd, $mn > 9$
and w.l.o.g. $m\geq 5$, such that Property {\bf B} holds both for
$C_m \oplus C_m$ and $C_n \oplus C_n$. Let $S\in {\mathcal A} (G)$
be a minimal zero-sum sequence of length $|S|=2mn-1$. The
sequence $S$ will remain fixed throughout the rest of this section.
Our goal is to show that $S$ contains an element with multiplicity
$mn-1$ (in other words, $\mathsf h (S) = mn-1$). We proceed in the
following way\,{\rm :}
\begin{itemize}
\smallskip
\item First, using Proposition \ref{mainproposition}, we establish the setting and some detailed notation necessary
to formulate the key ideas of the proof.

\smallskip
\item Next, we proceed with four lemmas, Lemmas \ref{Step-cant-flip-badly},
      \ref{lemma-for-two-opts}, \ref{lem-newstuff} and \ref{lem-aligment}, that collect
      several arguments used repeatedly in the proof.

\smallskip
\item Then we divide the main part of the proof into four claims, CLAIMS A, B, C and D, where in CLAIM D we finally
      show that $\mathsf h (S) = mn-1$.
\end{itemize}

\medskip
\centerline{\bf The Setting and Key Definitions}
\medskip

Since $S$ is fixed, we write $\Omega'$ and $\Omega$ instead of
$\Omega' (S)$ and $\Omega (S)$ (see Definition \ref{4.1}). Recall
that Lemma \ref{2.3}.3 implies that $\ord (x) = mn$ for all $x \in
\supp (S)$. Let $\varphi \colon G \to G$ denote the multiplication
by $m$ map. Then \ $\Ker ( \varphi) = nG \cong C_m \oplus C_m$ \ and
\ $\varphi (G) = mG \cong C_n \oplus C_n$.

Let $\Omega_0 \subset \Omega$ be all those $W\in \Omega$ for which
there exists a basis $(me_1,me_2)$ of $\varphi(G)$, where $e_1, e_2
\in G$, such that
$\varphi(W_{0})=(me_1)^{n-1}\prod_{\nu=1}^n(x_{\nu}me_1+me_2)$,
where  $x_1, \ldots, x_n  \in \Z$ with $x_1 + \ldots + x_n \equiv
1\mod n$, and such that for every  $i \in [1, 2m-2]$, $\varphi(W_i)$
is either of the form $\varphi(W_i)=(me_1)^{n}$, or of the form
$\varphi(W_i)=\prod_{\nu=1}^n (y_{i,\nu}me_1+me_2)$, where $y_{i,1},
\ldots, y_{i,n} \in \Z$ with $y_{i,1} + \ldots + y_{i,n} \equiv
0\mod n$. By Proposition \ref{mainproposition},  $\Omega_0 $ is
nonempty.

Let $W \in \Omega'$, and define $\widetilde{\sigma} (W) =
\prod_{\nu=0}^{2m-2}\sigma(W_{\nu}) \in \Fc \bigl( \Ker(\varphi)
\bigr)$. Since $S\in \A(G)$, it follows that $\widetilde{\sigma}
(W)\in \A \bigl( \Ker(\varphi) \bigr)$. Thus, since Property {\bf B}
holds for $\Ker (\varphi)$, it follows that $\widetilde{\sigma}
(W)\in \Upsilon \bigl( \Ker(\varphi) \bigr)$. Partition $\Omega_0 =
\Omega^u_0 \cup \Omega^{nu}_0 $ by letting $\Omega^u_0 $ be those
$W\in \Omega_0$ with $\widetilde{\sigma} (W)\in
\Upsilon_{u}\bigl(\Ker(\varphi) \bigr)$, and letting $\Omega^{nu}_0
$ be those $W\in \Omega_0$ with $\widetilde{\sigma} (W)\in
\Upsilon_{nu} \bigl(\Ker(\varphi) \bigr)$.

Let $W  \in \Omega_0 $, let $(me_1,me_2)$  be a basis of
$\varphi(G)$ satisfying the definition of $\Omega_0 $, with
$e_1,\,e_2\in G$, and let $(f_1,f_2)$ be a basis for $\Ker(\varphi)$
such that $\widetilde{\sigma} (W)$ can be written as in  the
definition of $\Upsilon \bigl( \Ker ( \varphi) \bigr)$. Let $S_1$ be
the subsequence of $S$ consisting of all terms $x$ with $\varphi(x)
= me_1$, and define $S_2$ by $S=S_1S_2$. Let $I\subset \Z$ be an
interval of length $n$. Then each term $x$ of $S_1$ has a unique
representation of the form $x=e_1 + ng$, with $n
g\in \Ker(\varphi)$ (where $g \in G$), and each term $x$ of $S_2$ has a unique
representation of the form $x=a e_1+e_2+ng$, with $a \in I$ and $ng\in \Ker(\varphi)$ (where $g \in
G$). Define $\psi(x)= n g \in
\Ker(\varphi)$ and, for $x \in \supp (S_2)$, define $\iota(x)=a \in
I\subset \Z$. We set $\psi (x) = \psi_1 (x) + \psi_2 (x)$, where
$\psi_1 (x) \in \langle f_1 \rangle$ and $\psi_2 (x) \in \langle f_2
\rangle$. If $y\in \Ker(\varphi)$, with $y=y_1f_1+y_2f_2$, then we
also use $\psi_i(y)$ to denote $y_if_i$. Note that, for $x \in \supp
(S_1)$, the value of $\psi(x)$ depends upon the choice of
$(e_1,e_2)$, and that, for $x \in \supp (S_2)$, the values of
$\psi(x)$ and $\iota(x)$ depend upon the choice of $(e_1,e_2)$ and
$I$. We will frequently need to vary the underlying choices for
$(e_1,e_2)$ and $I$, and each time we do so the corresponding values
of $\psi$ and $\iota$ will be affected. All maps will be extended to
sequences as explained before Definition \ref{2.1}.

Let $\A_1(W)$ be those $W_i$ either with $i=0$ or $\varphi(W_i)=
(me_1)^n$, let $\A_2(W)$ be all remaining $W_i$ as well as $W_0$,
and let $\A^*_i(W)=\A_i (W) \setminus \{W_0\}$ for $i \in \{1,2\}$.
If $W \in \Omega_0^u$, let $\C_0(W)$ be all those $W_i$ with
$\mathsf v_{\sigma(W_i)} \bigl( \widetilde{\sigma} (W) \bigr) <
m-1$, let $\C_1(W)$ be all remaining $W_i$, and let $\C^*_i
(W)=\C_i(W)\setminus\{W_0\}$ for $i \in \{0, 1\}$. If $W \in
\Omega_0^{nu}$, let $\C_0(W)$ be the unique $W_i$ with $\mathsf
v_{\sigma(W_i)} \bigl( \widetilde{\sigma}(W) \bigr) < m-1$, and
divide the remaining $2m-2$ blocks $W_i$ into either $\C_1(W)$ or
$\C_2(W)$ depending on the value of $\sigma(W_i)$; analogously
define $\C^*_i(W)$ for $i \in \{0,1,2\}$. When the context is clear, the
$W$ will be omitted from the notation. We regard the elements
$W_i,\,W_j\in\A_1$ as distinct when $i\neq j$, follow the same
convention for all other similar collections of $W_i$, and will
refer to them as blocks.

We further subdivide $W_0=W^{(1)}_0W^{(2)}_0$ with
$W^{(1)}_0=\gcd(W_0,S_1)$ and $W^{(2)}_0=\gcd(W_0,S_2)$, and for a
pair of subsequences $X$ and $Y$ with $XY \t S_2$, we define
$\epsilon'(X,Y)$ to be the integer in $[1,n]$ congruent to
$\sigma(\iota(X))-\sigma(\iota(Y))$ modulo $n$,
and define $\epsilon(X,Y)$ to be the integer such that
$$n-\epsilon'(X,Y)+\sigma(\iota(X))-\sigma(\iota(Y))=\epsilon(X,Y)n.$$

The main idea of the proof is to swap individual terms contained in
the blocks of $W\in \Omega_0$ in such a way so as to maintain that
the resulting product decomposition still lies in $\Omega'$. Using
the lemmas from Section \ref{3}, we will then derive information
about the possible values of $\psi$ and $\iota$ obtained on the terms that have
been swapped. The next three paragraphs detail the three major types
of swaps that we will use.

If $U,\,V\in \A_1$ are distinct (thus $U=W_i$ and $V=W_j$ for some
$i$ and $j$ distinct), then we may exchange any  subsequence $X|U$
for a subsequence $Y|V$ with $|Y|=|X|$ (if $U=W_0$, then $X$ must
additionally lie within $W^{(1)}_0$, and likewise for $V$) and the
resulting product decomposition $W'$ will still lie in $\Omega_0$,
equal to $W$ except that the blocks $U$ and $V$ of $W$ have been
replaced by the blocks $U':=X^{-1}UY$ and $V':=Y^{-1}VX$. Moreover,
\be\label{exchange-setup-I}\sigma(V')=\sigma(V)+
\sigma(\psi(X))-\sigma(\psi(Y)).\ee
We refer to this as a \ {\it type I swap}.

If $V\in \A_2^*$, and $Y|V$ and $X|W^{(2)}_0$ are subsequences with
$|X|=|Y|$, then by exchanging the sequence $Y|V$ for the sequence
$RX|W_0$, where $R|W^{(1)}_0$ is any subsequence with
$|R|=n-\epsilon'(X,Y)$, we obtain a product decomposition $W'$ that
still lies in $\Omega'$, equal to $W$ except that the blocks $V$ and
$W_0$ of $W$ have been replaced by the blocks $V':=Y^{-1}VXR$ and
$W'_0:=R^{-1}X^{-1}W_0Y$. Moreover,
\be\label{exchange-setup-II}\sigma(V')=\sigma(V)+\epsilon(X,Y)ne_1
+\sigma(\psi(X))-\sigma(\psi(Y))+\sigma(\psi(R)).\ee
We refer to this as a \ {\it type II swap}.

If $U,\,V\in \A_2$ are distinct, then we may exchange any
subsequence $X|U$ for a subsequence $Y|V$ with $|Y|=|X|$  and
$\sigma (\iota (X)) = \sigma (\iota (Y))$ (and if $U=W_0$, then $X$
must additionally lie within $W^{(2)}_0$, and likewise for $V$) and
the resulting product decomposition $W'$ will still lie in
$\Omega_0$, equal to $W$ except that the blocks $U$ and $V$ of $W$
have been replaced by the blocks $U':=X^{-1}UY$ and $V':=Y^{-1}VX$.
Moreover, \be\label{exchange-setup-III-0}\sigma(V')=\sigma(V)
+\sigma(\psi(X))-\sigma(\psi(Y)).\ee We refer to this as a \ {\it
type III swap}.

We will often also have need to change from $W \in \Omega_0$ to
another $W'\in \Omega_0$. One common way that this will be done will
be to find $U\in \A^*_2$ and $X|U\Wo$ ($X$ will often be a single
element dividing $U$). Then $|X^{-1}U\Wo|=2n-|X|$. If there is an
$n$-term subsequence $U'|X^{-1}U\Wo$ with $\sigma(U')\in \Ker
(\varphi)$ (as is guaranteed by Theorem \ref{EGZ-thm}.1
in case $|X|=1$), then, defining $W'_0$ by $W'_0U'=W_0U$, we obtain a
new product decomposition $W'\in \Omega_0$ by replacing the blocks
$W_0$ and $U$ by $W'_0$ and $U'$. Moreover, $X \t {W_0'}^{(2)}$. We
refer to such a procedure as \ {\it pulling $X$ up into the new
product decomposition $W'$}.

All of the above procedures result in a new product decomposition $W'\in \Omega'$,
and we will always assume $W'=(W'_0,\ldots,W'_{2m-2})$, with $W'_k=W_k$ for all blocks $W_k$ not involved in the procedure,
and with $W'_i$ and $W'_j$ defined as above for the two blocks $W_i$ and $W_j$ involved in the procedure.

\medskip
\centerline{\bf Four Lemmas}
\medskip

We will often only consider $W\in \Omega_0^{nu}$ when
$\Omega_0^{u}=\emptyset$ (with one exception in CASE 3 of CLAIM C).
The reason for this is to ensure that, if a swapping procedure
applied to $W$ results in a new product decomposition $W'\in
\Omega_0$, then $W'\in \Omega_0^{nu}$ is guaranteed, and hence the
more powerful Lemma \ref{lem-Pertebation-III} is available (instead
of the weaker Lemma \ref{lem-Pertebation-II}).

The following lemma will be used in CASE 3 of CLAIM C to avoid having to consider a $W''\in \Omega_0^{nu}$ when $\Omega_0^{u}\neq \emptyset$.

\begin{lemma} \label{Step-cant-flip-badly}
Let $W\in \Omega_0^u $, $U\in \C_1$ and $V_1,\,V_2\in \C_0$ be
distinct. Suppose there exist $X|U$ and $Y_1|V_1$ such that swapping
$X$ for $Y_1$ yields a new product decomposition $W'\in \Omega'$
with the new block $U'=X^{-1}UY_1$ in $W'$ having $\sigma(U')\neq
\sigma(U)$. If $Y_2|Y_1^{-1}V_1$ and $Z|V_2$ are nontrivial
subsequences such that swapping $Y_2$ for $Z$ in $W$ yields a new
product decomposition $W''\in \Omega_0$, then $W''\in \Omega_0^u$.
\end{lemma}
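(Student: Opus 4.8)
The plan is to use the hypothesized swap $X\leftrightarrow Y_1$, which certifies that a certain perturbation of $\widetilde\sigma(W)$ still lies in $\Upsilon(\Ker(\varphi))$, together with the swap $Y_2\leftrightarrow Z$; since these act on disjoint parts of $V_1$ (we have $Y_1Y_2\mid V_1$) they can be composed into a third product decomposition $W'''\in\Omega'$, and the assumption $W''\in\Omega_0^{nu}$ will force $\widetilde\sigma(W''')$ out of $\Upsilon(\Ker(\varphi))$. Write $K=\Ker(\varphi)$ and $T_0=\widetilde\sigma(W)\in\Upsilon_u(K)$; fix a basis $(f_1,f_2)$ of $K$ with $T_0=f_1^{m-1}\prod_{\nu=1}^m(x_\nu f_1+f_2)$, so $f_1$ is the unique term of multiplicity $m-1$, every other term lies in $f_2+\langle f_1\rangle$, and every multiplicity other than that of $f_1$ is $\le m-2$. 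Since $U\in\C_1$, $\sigma(U)=f_1$. Put $v_1=\sigma(V_1)$, $v_2=\sigma(V_2)$ (both $\ne f_1$, as $V_1,V_2\in\C_0$, hence both in $f_2+\langle f_1\rangle$), $\alpha=\sigma(Y_1)-\sigma(X)$, $\beta=\sigma(Z)-\sigma(Y_2)$, so $\sigma(U')=f_1+\alpha$, $\sigma(V_1'')=v_1+\beta$, $\sigma(V_2'')=v_2-\beta$. By hypothesis $\alpha\ne0$; and if $\beta=0$ then $\widetilde\sigma(W'')=T_0\in\Upsilon_u(K)$, so $W''\in\Omega_0^u$ and we are done --- thus assume also $\beta\ne0$.

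First I would pin down $\alpha$ and $\beta$ via Lemma \ref{lem-Pertebation-I}, applied to $T_0\in\Upsilon_u(K)$. Since $W'\in\Omega'$, Property {\bf B} and Lemma \ref{2.3}.1 give $\widetilde\sigma(W')\in\mathcal A(K)=\Upsilon(K)$; writing $v_1=x_jf_1+f_2$, the sequence $\widetilde\sigma(W')=f_1^{-1}(x_jf_1+f_2)^{-1}T_0(f_1+\alpha)(x_jf_1+f_2-\alpha)$ satisfies the hypothesis of Lemma \ref{lem-Pertebation-I}.2, so $\alpha\in\{0,v_1-f_1\}$ and hence $\alpha=v_1-f_1$, i.e.\ $\sigma(U')=v_1$. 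Similarly $W''\in\Omega_0\subset\Omega'$ gives $T:=\widetilde\sigma(W'')\in\Upsilon(K)$, and writing $v_2=x_kf_1+f_2$ with $j\ne k$ (possible even when $v_1=v_2$, since then $\mathsf v_{v_1}(T_0)\ge2$), the sequence $T=(x_jf_1+f_2)^{-1}(x_kf_1+f_2)^{-1}T_0(x_jf_1+f_2+\beta)(x_kf_1+f_2-\beta)$ satisfies the hypothesis of Lemma \ref{lem-Pertebation-I}.3, so $\beta\in\langle f_1\rangle$. In particular $\sigma(V_1''),\sigma(V_2'')\in f_2+\langle f_1\rangle$ are $\ne f_1$, and $\mathsf v_{f_1}(T)=m-1$.

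Now assume for contradiction that $W''\in\Omega_0^{nu}$, i.e.\ $T\in\Upsilon_{nu}(K)$. One checks easily that every sequence in $\Upsilon_{nu}(C_m\oplus C_m)$ has the form $e_1^{m-1}e_2^{m-1}(e_1+e_2)$ for a basis $(e_1,e_2)$ whose two vectors are precisely its terms of multiplicity $m-1$; since $f_1$ is one of these, $T=f_1^{m-1}g^{m-1}(f_1+g)$ for a basis $(f_1,g)$ of $K$, with $g,f_1+g\in f_2+\langle f_1\rangle$, so that $\sigma(V_1'')\in\{g,f_1+g\}$. Then I would perform the swap of $X\mid U$ for $Y_1$ inside $W''$: this is legitimate because $Y_1\mid Y_2^{-1}V_1\mid V_1''$ and $\sigma(X)\equiv\sigma(Y_1)\pmod K$ (both read off from $W'\in\Omega'$), and it produces $W'''\in\Omega'$ in which $\sigma(U)=f_1$ is replaced by $f_1+\alpha=v_1$ and $\sigma(V_1'')$ is replaced by $\sigma(V_1'')-\alpha=f_1+\beta$. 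Hence $T''':=\widetilde\sigma(W''')\in\mathcal A(K)=\Upsilon(K)$ is obtained from $T$ by deleting one $f_1$ and one $\sigma(V_1'')$ and adjoining $v_1$ and $f_1+\beta$; explicitly $T'''=f_1^{m-2}g^{m-2}v_1(f_1+\beta)(f_1+g)$ or $T'''=f_1^{m-2}g^{m-1}v_1(f_1+\beta)$ according as $\sigma(V_1'')=g$ or $\sigma(V_1'')=f_1+g$.

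The heart of the argument --- and the step I expect to be most delicate --- is then to contradict $T'''\in\Upsilon(K)$ by multiplicity bookkeeping. Because $m\ge5$, in $T'''$ every element other than $f_1$ and $g$ has multiplicity $\le2<m-1$, while $\mathsf v_{f_1}(T''')=m-2$; hence the only term of $T'''$ that can be the distinguished vector of a $\Upsilon(K)$-representation is $g$. But either $\mathsf v_g(T''')\le m-2$, so $\mathsf h(T''')\le m-2$, contradicting that every sequence in $\Upsilon(K)$ has a term of multiplicity $m-1$; or $\mathsf v_g(T''')=m$, contradicting $T'''\in\mathcal A(K)$; or $\mathsf v_g(T''')=m-1$ and $g$ is distinguished, so the $m$ terms of $T'''$ other than $g$ all lie in a single coset of $\langle g\rangle$ --- yet they include $f_1$ (with multiplicity $m-2\ge1$) and $f_1+\beta$, forcing $\beta\in\langle g\rangle$, whereas $\beta\in\langle f_1\rangle\setminus\{0\}$ and $\langle f_1\rangle\cap\langle g\rangle=\{0\}$ since $(f_1,g)$ is a basis. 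In every case we reach a contradiction, so $W''\in\Omega_0^u$.
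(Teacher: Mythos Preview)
Your proof is correct and follows essentially the same approach as the paper: assume $W''\in\Omega_0^{nu}$, compose the two swaps (which is possible since $Y_1Y_2\mid V_1$) to obtain $W'''\in\Omega'$, and derive a contradiction from $\widetilde\sigma(W''')\in\Upsilon(K)$. The organizational differences are minor: you dispose of the case $\beta=0$ at the outset and then reach a direct contradiction via a multiplicity count using $\beta\in\langle f_1\rangle\setminus\{0\}$ and $\langle f_1\rangle\cap\langle g\rangle=\{0\}$, whereas the paper rechooses $f_2$ to be your $g$, writes out $\widetilde\sigma(W''')$ explicitly in the two cases, and shows that membership in $\Upsilon(K)$ forces $\sigma(V_1'')=\sigma(V_1)$ (equivalently $\beta=0$), contradicting $W\in\Omega_0^u$. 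One small wording issue: you write ``$\mathcal A(K)=\Upsilon(K)$'', but what is meant (and used) is that minimal zero-sum sequences over $K$ of length $2m-1$ lie in $\Upsilon(K)$.
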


\begin{proof}
Assume by contradiction that $W''\in \Omega_0^{nu}$, so that
w.l.o.g. $\widetilde{\sigma} (W'')=f_1^{m-1}f_2^{m-1}(f_1+f_2)$
with $\sigma(U)=f_1$ (since $\sigma(U)$ is a maximal multiplicity term in $\sig(W)$ and all blocks involved in the swap resulting in $W''$ are not of maximal multiplicity, it follows that $\sigma(U'')=\sigma(U)$ must be a maximal multiplicity term in $\sig(W'')$ as well). Since $m\geq 4$ (so that $f_2f_2^{m-1}|\sig(W)$), let $\sigma(V_1)=Cf_1+f_2$ with $C \in [0,
m-1]$. By hypothesis, we may swap $Y_1|V''_1=Y_2^{-1}V_1Z$ for $X|U''=U$
to obtain a new product decomposition $W'''\in \Omega'$, with new
respective terms $V'''_1$ and $U'''$. Since (by hypothesis) swapping
$X$ for $Y_1$ in $W$ yields a new product decomposition $W'\in
\Omega'$ such that the new block $U'=X^{-1}UY_1$ in $W'$ has
$\sigma(U')\neq \sigma(U)$, it follows from Lemma
\ref{lem-Pertebation-I}.2 that $\sigma(U''')=\sigma(U')=Cf_1+f_2$
and $\sigma(V'''_1)=\sigma(V''_1)+(1-C)f_1-f_2$.

Suppose $\sigma(V''_1)=f_2$. Then, from the above paragraph, we
conclude that
\[
\widetilde{\sigma}
(W''')=f_2^{m-2}(f_1+f_2)((1-C)f_1)f_1^{m-2}(Cf_1+f_2) \,.
\]
Thus, since $\widetilde{\sigma} (W''')\in \Upsilon(\Ker(\varphi))$
and $m\geq 4$, it follows that $C=0$, whence
$\sigma(V''_1)=f_2=Cf_1+f_2=\sigma(V_1)$. However, this implies that
$\widetilde{\sigma} (W) = \widetilde{\sigma} (W'')\in
\Upsilon^{nu}_0$, contrary to $W\in\Omega_0^u$. So we may assume instead
that $\sigma(V''_1)=f_1+f_2$ (note $\sigma(V''_1)\neq f_1$, since $\sigma(U)=f_1$, $U\in \C_1(W)$ and no terms from $\C_1(W)$ were involved in the swap resulting in $W''$).

In this case, we instead conclude that
$$\widetilde{\sigma} (W''')=f_2^{m-1}((2-C)f_1)f_1^{m-2}(Cf_1+f_2).$$ Thus, since
$\widetilde{\sigma} (W''')\in \Upsilon(\Ker(\varphi))$ and $m\geq
3$, we conclude that $C=1=2-C$, and once more
$\sigma(V''_1)=\sigma(V_1)$, yielding the same contradiction as in
the previous paragraph, completing the lemma.
\end{proof}

\medskip

The next two lemmas will often be used in conjunction, and will form one of our main swapping strategy arguments used for CLAIMS A and B. Note that Lemma \ref{lemma-for-two-opts}(i) gives a strong structural description as well as a term of multiplicity at least $(|\Dc_1|+1)n-1$ in $S$, while Lemma \ref{lemma-for-two-opts}(ii) allows us to invoke Lemma \ref{lem-newstuff}.

\begin{lemma}\label{lemma-for-two-opts}
Let $W\in \Omega_0$ and, if $\Omega^u_0 \neq \emptyset$, assume
that $W\in \Omega^u_0$. Let $\Dc_1,\,\Dc_2\subset \A_2^*$ be such
that, for each (relevant) $i\in [0,2]$, there do not exist $U\in \Dc_1$ and
$V\in \Dc_2$ with $U,\,V\in \C_i$. If either
\begin{enumerate}
\item[(a)] $|\Dc_1|\geq 1$ and every type III swap between $x|\Wo$ and
           $y|W_j$, with $W_j\in \Dc_1$ and
           $\iota(x)=\iota(y)$, results in a new product decomposition $W'$
           with $\sigma(W'_0)=\sigma(W_0)$, \ or

\item[(b)] $|\Dc_1|\geq 2$ and $|\Dc_{2}|\geq 1$,
\end{enumerate}
then one of the following two statements hold{\rm \,:}
\begin{enumerate}
\item[(i)] There exist $x_0|\Wo$, $g\in I$ and $\alpha\in \Ker(\varphi)$
           such that $\iota(x_0)\equiv g+1\mod n$, $\iota(x)=g$ and
           $\psi(x)=\alpha$, for all $x|x_0^{-1}\Wo\prod_{V\in
           \Dc_1}V$.

\item[(ii)] There exist $W_j\in \Dc_1$, $X|\Wo$ and $Y|W_j$ such that
             $|X|=|Y|$ and $\epsilon'(X,Y)\notin \{1,n\}$.
\end{enumerate}
\end{lemma}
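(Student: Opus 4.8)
The plan is to assume that conclusion~(ii) fails and to deduce~(i). Failure of~(ii) says that for every $W_j\in\Dc_1$ and all subsequences $X\t\Wo$, $Y\t W_j$ with $|X|=|Y|$ one has $\epsilon'(X,Y)\in\{1,n\}$, i.e.\ $\sigma(\iota(X))-\sigma(\iota(Y))\equiv 0$ or $1\pmod n$. Note $|\Wo|=n$ (since $|W_0^{(1)}|=n-1$), so $|\Wo|=n=|W_j|\ge3$; hence, viewing $\iota(\Wo)$ and $\iota(W_j)$ as sequences over $\Z/n\Z$, the above reads $\bigcup_{i=1}^{2}\bigl(\Sigma_i(\iota(\Wo))-\Sigma_i(\iota(W_j))\bigr)\subset\{0,1\}$ in $\Z/n\Z$, where $\ord(1)=n>2$. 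I would also use that $\sigma(\iota(\Wo))\equiv x_1+\cdots+x_n\equiv1\pmod n$, so $\iota(\Wo)$ is not a constant sequence over $\Z/n\Z$. Applying Lemma~\ref{lem-basic-exchange}.3 over $\Z/n\Z$, with whichever of $\iota(\Wo),\iota(W_j)$ has the larger support in the role of ``$S$'' (and $a=\pm1$ accordingly), every possible conclusion makes the other sequence constant; as that cannot be $\iota(\Wo)$, we get $|\supp\iota(\Wo)|\ge|\supp\iota(W_j)|$ and a residue class $\bar g\in\Z/n\Z$ such that, over $\Z/n\Z$, $\iota(W_j)$ is constant equal to $\bar g$ and $\iota(\Wo)$ has exactly one term outside $\bar g$ (in the class $\bar g+1$), the others equal to $\bar g$. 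Since $n-1\ge2$, the class $\bar g$ is the unique majority value of $\iota(\Wo)$, hence independent of $W_j\in\Dc_1$. Letting $x_0\t\Wo$ be the unique term with $\iota(x_0)$ outside $\bar g$, every term $x$ of $x_0^{-1}\Wo\prod_{V\in\Dc_1}V$ has $\iota(x)\in I$ congruent to $\bar g$, hence equal to the single representative $g\in I$ of $\bar g$, while $\iota(x_0)\equiv g+1\pmod n$. This is the $\iota$-part of~(i); it remains to produce $\alpha$.

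Since each term $x$ of $x_0^{-1}\Wo\prod_{V\in\Dc_1}V$ now has $\iota(x)=g$, any equal-length type~III swap among these terms is admissible and, by~\eqref{exchange-setup-III-0}, perturbs $\widetilde{\sigma}(W)$ only in the two coordinates of the two affected blocks (replacing $\sigma(W_a)\sigma(W_b)$ by $(\sigma(W_a)+h)(\sigma(W_b)-h)$ with $h=\psi(y)-\psi(x)$ in the single-element case), the resulting decomposition again lying in $\Omega_0$. Under hypothesis~(a) this finishes the proof immediately: swapping a single $x\t x_0^{-1}\Wo$ for a single $y\t W_j$ with $W_j\in\Dc_1$ yields, by~\eqref{exchange-setup-III-0} with $V=W_0$, a decomposition $W'$ with $\sigma(W_0')=\sigma(W_0)+\psi(y)-\psi(x)$; hypothesis~(a) forces $\sigma(W_0')=\sigma(W_0)$, hence $\psi(x)=\psi(y)$. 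As this holds for all such $x,y$, all the values $\psi(x)$ with $x\t x_0^{-1}\Wo\prod_{V\in\Dc_1}V$ coincide, and their common value is the required $\alpha$.

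Under hypothesis~(b) the plan is the same, but ``$h=0$'' is to be obtained from the perturbation lemmas of Section~\ref{3} instead of from a hypothesis. Fix distinct $W_{j_1},W_{j_2}\in\Dc_1$ and $V_0\in\Dc_2$. A type~III swap between two of $W_0,W_{j_1},W_{j_2},V_0$ (for the swaps involving $V_0$ one must first match $\iota$-values) perturbs $\widetilde{\sigma}(W)$ in exactly the shape analyzed by Lemmas~\ref{lem-Pertebation-I},~\ref{lem-Pertebation-II},~\ref{lem-Pertebation-III}: one invokes Lemma~\ref{lem-Pertebation-I} when the resulting decomposition lies in $\Omega_0^u$, Lemma~\ref{lem-Pertebation-III} when it lies in $\Omega_0^{nu}$ with $\Omega_0^u=\emptyset$, and Lemma~\ref{lem-Pertebation-II} otherwise, the disjointness hypothesis on $\Dc_1,\Dc_2$ being what rules out the degenerate $\sigma$-value configurations those lemmas do not cover. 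Each such swap confines the corresponding $h=\psi(y)-\psi(x)$ to $\langle f_1\rangle$, to $\langle f_2\rangle$, or to $\{0\}$. Since $\langle f_1\rangle\cap\langle f_2\rangle=\{0\}$ and a given term of $x_0^{-1}\Wo$ can be compared against terms of $W_{j_1}$, of $W_{j_2}$, and (along the triangle $W_{j_1}$--$W_{j_2}$, and $V_0$) against several distinct $\sigma$-configurations, intersecting the resulting constraints forces every relevant $h$ to vanish; hence $\psi$ is constant on $x_0^{-1}\Wo$ and on each block of $\Dc_1$, with one common value $\alpha$, which is~(i).

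The $\iota$-analysis of the first paragraph and case~(a) are routine; I expect the main obstacle to be case~(b). There one must keep careful track of the values $\sigma(W_0),\sigma(W_{j_1}),\sigma(W_{j_2}),\sigma(V_0)$ relative to the $\Upsilon$-basis $(f_1,f_2)$, decide for each swap whether its output lies in $\Omega_0^u$ or $\Omega_0^{nu}$ (hence whether only the weak Lemma~\ref{lem-Pertebation-II}, or a sharper Lemma~\ref{lem-Pertebation-I} or~\ref{lem-Pertebation-III}, is available), identify which of $\langle f_1\rangle,\langle f_2\rangle$ each perturbation lemma returns, and verify that in every configuration of $\{W_{j_1},W_{j_2},V_0\}$ among the $\C_i$ the accumulated constraints genuinely intersect in $\{0\}$.
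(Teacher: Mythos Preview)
Your $\iota$-analysis and your treatment of hypothesis~(a) are correct and match the paper's argument essentially line for line.

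The gap is in hypothesis~(b). You propose to work entirely with type~III swaps among $W_0,W_{j_1},W_{j_2},V_0$, noting parenthetically that ``for the swaps involving $V_0$ one must first match $\iota$-values''. But this is exactly the problem: you have established $\iota(x)=g$ only for $x\t x_0^{-1}\Wo\prod_{V\in\Dc_1}V$; you know nothing about $\iota$ on $V_0\in\Dc_2$, so there is no guarantee that any term of $V_0$ has a matching $\iota$-value, and hence possibly no type~III swap with $V_0$ is available at all. The paper circumvents this by using \emph{type~II} swaps between $\Wo$ and the relevant block (type~II swaps compensate for arbitrary $\iota$-differences via the auxiliary subsequence $R\t W_0^{(1)}$), and crucially uses the \emph{same} fixed $R$ when swapping a fixed $u\t U$ against each $x\t x_0^{-1}\Wo$, which is possible precisely because all such $x$ have $\iota(x)=g$. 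This is what actually produces the constraints $\psi_1$ or $\psi_2$ constant on $x_0^{-1}\Wo$.

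Your summary ``each such swap confines $h$ to $\langle f_1\rangle$, to $\langle f_2\rangle$, or to $\{0\}$, and intersecting gives $\{0\}$'' is also an oversimplification. Several of the relevant perturbation outputs are two-element sets such as $\{0,(x_j-1)f_1+f_2\}$ (Lemma~\ref{lem-Pertebation-I}.2) or $\{0,-f_1+f_2\}$ (Lemma~\ref{lem-Pertebation-II}.3), not full cyclic subgroups, and simply intersecting them does not force $h=0$ in every configuration. The paper therefore splits into three cases (according to whether $W_0\in\C_0$, and whether $W\in\Omega_0^u$ or $\Omega_0^{nu}$), in each case first pinning down $\psi$ on $x_0^{-1}\Wo$ via type~II swaps with blocks of \emph{both} $\sigma$-types, and only then using type~III swaps (among the $V\in\Dc_1$, and between $W_0$ and the $V\in\Dc_1$) together with Lemma~\ref{lem-basic-exchange}.3 to propagate the constant $\alpha$ to the remaining blocks---with several further sub-arguments to eliminate the residual two-valued possibilities. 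Your outline does not supply this, and the final paragraph of your proposal correctly identifies it as the part you have not carried out.
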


\begin{proof}
We assume that (ii) fails and show that (i)  holds. If $W_0\in
\C_0$, then choose $f_2$ such that $\sigma(W_0)=f_1+f_2$; if $W\in
\Omega_0^{nu}$, then choose $f_2$ such that
$\sig(W)=f_1^{m-1}f_2^{m-1}(f_1+f_2)$ (note, in case $W_0\in C_0$
and $W\in \Omega_0^{nu}$, that this choice of $f_2$ agrees with the
previous choice), and assume $\C_1$ consists of those $W_i$ with
$\sigma(W_i)=f_1$; and if $W_0\notin \C_0$, then w.l.o.g. assume
$W_0\in \C_1$.

Applying Lemma \ref{lem-basic-exchange}.3 to $\iota(\Wo)$ and each
$\iota(V)$ with $V\in\Dc_1$, with both sequences considered modulo
$n$ (since (ii) fails, the hypothesis of Lemma
\ref{lem-basic-exchange}.3 holds with $\{0,a\}$ equal to  $\{n, 1\}$
modulo $n$), we conclude, in view of $\sigma(\iota(\Wo))\equiv 1\mod
n$ (and hence $|\supp(\iota(\Wo))|>1$), that there exist $x_0|\Wo$ and $g\in I$ such that
$\iota(x_0)\equiv g+1\mod n$ and $\iota(x)=g$ for all
$x|x_0^{-1}\Wo\prod_{V\in \Dc_1}V$. If (a) holds, then performing type
III swaps between $W_0$ and the $V\in \Dc_1$ completes the proof.
Therefore assume (a) fails and (b) holds instead.

\smallskip

\noindent CASE 1: \  $W_0\in \C_0$.

Thus, since $|\Dc_1|,\,|\Dc_2|\geq 1$, let $U\in \A_2^*\cap (\Dc_1\cup
\Dc_2)$ with $\sigma(U)=f_1$ and let $V\in \A^*_2\cap(\Dc_1\cup
\Dc_2)$ with $\sigma(V)=Cf_1+f_2$ for some $C\in\Z$. Performing a
type II swap between some fixed $u|U$ and each $x|x_0^{-1}\Wo$
(using the same fixed subsequence $R|W_0^{(1)}$ in every swap, which
is possible since $\iota(x)=g$ for all $x|x_0^{-1}\Wo$), we conclude
from either Lemma \ref{lem-Pertebation-I}.2 (since
$\sigma(W_0)=f_1+f_2$) or Lemma \ref{lem-Pertebation-II}.4 that
$\psi_1$ is constant on $x_0^{-1}\Wo$. Likewise performing a type II
swap between some fixed $v|V$ and each $x|x_0^{-1}\Wo$, we conclude
from either Lemma \ref{lem-Pertebation-I}.3 or Lemma
\ref{lem-Pertebation-II}.5 that $\psi_2$ is constant on
$x_0^{-1}\Wo$. Consequently, $\psi(x)=\alpha$ (say) for all
$x|x_0^{-1}\Wo$.

Suppose $W\in\Omega_0^{nu}$. Then $\Dc_1\subset\A_2^*\cap \C_i$, for
some $i\in\{1,2\}$ (in view of the hypotheses of CASE 1 and the lemma), and performing type III swaps between the $Z\in
\Dc_1$, we conclude, in view of $|\Dc_1|\geq 2$ and Lemma
\ref{lem-Pertebation-III}.1 or \ref{lem-Pertebation-III}.2, that
$\psi(x)=\alpha'$ (say) for all $x|\prod_{V\in \Dc_1}V$. Further
applying type III swaps between $W_0$ and any $Z\in \Dc_1$, we
conclude from Lemma \ref{lem-basic-exchange}.3 and either Lemma
\ref{lem-Pertebation-III}.4 or \ref{lem-Pertebation-III}.5 that
$\alpha=\alpha'$, completing the proof. So we may assume
$W\in\Omega_0^{u}$.

If $\Dc_1\subset \C_1$, then repeating the argument of the previous
paragraph using Lemma \ref{lem-Pertebation-I} in place of Lemma
\ref{lem-Pertebation-III} completes the proof. Therefore we may
assume $\Dc_1\subset \C_0$. Let $Z\in \Dc_1$ and $z|Z$. We proceed
to show $\psi(z)=\alpha$, which, since $z|Z\in \Dc_1$ is
arbitrary, will complete the proof.

If performing a type III swap between $z|Z$ and some $x|x_0^{-1}\Wo$
results in a new product decomposition $W'\in \Omega^u_0$, then $W'_0\in \C_0$ (as both $W_0,\,Z\in \C_0$) and,
repeating the arguments of the first paragraph of CASE 1 this time for $W'$, we
conclude that $\psi(z)=\alpha$. If $W'\in \Omega_0^{nu}$, then we
can choose a new $f_2$ such that $\sig(W')=f_1^{m-1}f_2^{m-1}(f_1+f_2)$.
If also $W'_0\in \C_0$, then $\sigma(W'_0)=f_1+f_2$, and  repeating
the arguments of the first paragraph for $W'$ shows $\psi(z)=\alpha$.
Therefore suppose $W'\in \Omega_0^{nu}$ and $\sigma(W'_0)=f_2$. In
view of Lemma \ref{lem-Pertebation-I}.3, we have $\alpha-\psi(z)\in
\langle f_1 \rangle$. However, if $\alpha\neq \psi(z)$, then
performing a type II swap between some $y|U'=U$ and both $z|W'_0$
and $z'|W'_0$, where $\iota(z')=g$ and $\psi(z')=\alpha$, we
conclude from Lemma \ref{lem-Pertebation-II}.3 that $$\epsilon ne_1
+\sigma(\psi(R))-\psi(y)+\{\psi(z),\alpha\}=\{0,f_2-f_1\},$$ where
$\epsilon=\epsilon(z,y)=\epsilon(z',y)$ (in view of
$\iota(z)=\iota(z')=g$) and $R$ is the same fixed subsequence of
${W_0'}^{(1)}$ used in both swaps (also possible since
$\iota(z)=\iota(z')=g$). Hence $\psi(z)-\alpha=\pm(f_2-f_1)$,
contradicting that $\alpha-\psi(z)\in \langle f_1 \rangle$, and
completing CASE 1.

\smallskip

\noindent CASE 2: \  $W_0\notin C_0$ \ and \ $W\in \Omega_0^{nu}$.

Then $W_0\in \C_1$ (by our normalizing assumptions). If there is $Z\in \Dc_1\cap \C_0$ and $\Dc_1\cap \C_2=\emptyset$, then, in view of Lemma \ref{lem-Pertebation-III}.4, we may assume that performing any type III swap between $z|Z$ and $x|x_0^{-1}\Wo$ results in a product decomposition $W'$ with $\sigma(W'_0)=\sigma(W_0)$, else CASE 1 applied to $W'$ completes the proof. Note that Lemma \ref{lem-Pertebation-III}.1 guarantees the same for any $Z\in \Dc_1\cap \C_1$. Thus if $\Dc_1\cap \C_2\neq \emptyset$, then (a) holds, contrary to assumption, and so we may assume instead that $\Dc_1\cap C_2\neq \emptyset$.

Suppose there is $Z\in \Dc_2$ with $\sigma(Z)=f_1+f_2$. Then performing type II swaps between some $z|Z$ and each $x|x_0^{-1}\Wo$ (using the same $R|W_0^{(1)}$ for every swap, which is possible since $\iota(x)=g$ for all $x|x_0^{-1}\Wo$), we conclude from Lemma \ref{lem-Pertebation-II}.4 that $\psi_1$ is constant on $x_0^{-1}\Wo$. If we perform type III swaps between $U$ and $W_0$ with $U\in\Dc_1\cap \C_2$, then we conclude from Lemmas \ref{lem-Pertebation-II}.3 and \ref{lem-basic-exchange}.3 that there is $u_0|x_0^{-1}\Wo U$ such that $\psi(x)=\alpha$ (say) for all $x|u_0^{-1}x_0^{-1}\Wo U$ and $\psi(u_0)=\alpha$ or $\alpha\pm(f_2-f_1)$. Thus, as $\psi_1$ is constant on $x_0^{-1}\Wo$, we conclude that $\psi(x)=\alpha$ for all $x|x_0^{-1}\Wo$. If $u_0|U$ with $\psi(u_0)=\alpha+f_2-f_1$, then swapping $u_0|U$ for $x|x_0^{-1}\Wo$ results in a new product decomposition $W'$ such that $\sig(W')=\sig(W)$, $\sigma(W'_0)=f_2$, and $\psi_2$ is not constant on $x_0^{-1}{W_0'}^{(2)}$. However repeating the argument from the beginning of the paragraph for $W'$, using Lemma \ref{lem-Pertebation-II}.5 in place of Lemma \ref{lem-Pertebation-II}.4, we see that $\psi_2$ must be constant on $x_0^{-1}{W_0'}^{(2)}$, a contradiction. Thus we see that any type III swap between $u|U\in\Dc_1\cap \C_2$ and $x|x_0^{-1}\Wo$ results in a product decomposition $W'$ with $\sigma(W'_0)=\sigma(W_0)$. As a result, since $Z\in \Dc_2$ with $\sigma(Z)=f_1+f_2$, it follows from Lemma \ref{lem-Pertebation-III}.1 that (a) holds, contrary to assumption. So we may assume $\Dc_2\cap \C_0$ is empty. Thus, in view of $\Dc_1\cap \C_2\neq \emptyset$ and the hypotheses, it follows that there is $U\in \Dc_2\cap \C_1$.

Performing type II swaps between some $y|U$ and each $x|x_0^{-1}\Wo$ (using the same $R|W_0^{(1)}$ for every swap), we conclude from Lemma \ref{lem-Pertebation-II}.1 that $\psi_1$ is constant on $x_0^{-1}\Wo$. Consequently,
performing type III swaps between $W_0$ and each $V_i\in \Dc_1\cap \C_2$, we conclude from Lemmas \ref{lem-Pertebation-II}.3 and \ref{lem-basic-exchange}.3 that there exists $v_i|V_i$ such that $\psi(x)=\alpha$ (say) for all $x|v_i^{-1}x_0^{-1}\Wo V_i$; moreover, $\psi(v_i)=\alpha$ or $\alpha+f_2-f_1$. If there is $Z\in \Dc_1\cap \C_0$, then, performing type III swaps between the $x|x_0^{-1}\Wo$ and $z|Z$, and between the $x|V_i\in \Dc_1\cap \C_2$ and $z|Z$, we conclude from Lemmas \ref{lem-Pertebation-III}.4 and \ref{lem-Pertebation-III}.5 that $\psi(x)=\alpha$ for all $x|Z$.

If $Z\in \Dc_1\cap \C_0$ does not exist, then $|\Dc_1|\geq 2$ and $|\Dc_2\cap\C_1|\geq 1$ ensure $|\Dc_1\cap \C_2|\geq 2$, and, performing type III swaps between the $V\in \Dc_1\cap \C_2$, we conclude from Lemma \ref{lem-Pertebation-III}.2 that $\psi(x)=\alpha$ for all $x|V$ with $V\in \Dc_1\cap \C_2$, completing the proof. On the other hand, if there is $Z\in \Dc_1\cap \C_0$, then applying type III swaps between $Z$ and each $V_i\in \Dc_1\cap \C_2$, we conclude from Lemma \ref{lem-Pertebation-II}.5 that $\psi_2$ is constant on $V_i$ and $Z$; consequently, since $\psi(v_i)=\alpha$ or $\alpha+f_2-f_1$, and since $\psi(v)=\alpha$ for all $v|v_i^{-1}V_i$, we conclude that $\psi(v_i)=\alpha$ as well, completing the proof.

\smallskip

\noindent CASE 3: \  $W_0\notin C_0$ \ and \ $W\in \Omega_0^{u}$.

Then $W_0\in \C_1$ and $\Dc_1\subset \C_0$ (else (a) holds in view
of Lemma \ref{lem-Pertebation-I}.1). Since $|\Dc_{2}|\geq 1$, there
is $U\in \A_2^*\cap \C_1$. Performing type II swaps between each $x|x_0^{-1}W_0$ and some fixed $u|U$ (using the same fixed sequence $R|W_0^{(1)}$ in each swap), it follows from Lemma \ref{lem-Pertebation-I}.1 that
$\psi(x)=\alpha$ (say) for all $x|x_0^{-1}\Wo$. Let $V_i\in \Dc_1$.
Performing type III swaps between $W_0$ and $V_i$, we conclude from
Lemmas \ref{lem-Pertebation-I}.2 and \ref{lem-basic-exchange}.3 that
$\psi(z)=\alpha$ for all $z|v_i^{-1}V_i$, for some $v_i|V_i$;
moreover, either $\psi(v_i)=\alpha$ or
$\psi(v_i)=\alpha-\sigma(W_0)+\sigma(V_i)$. However, in the latter
case, since $V_i\in \C_0$ and $W_0\in \C_1$ (so that $\sigma(W_0)=f_1$ and $\sigma(V_i)=Cf_1+f_2$, for some $C\in \Z$), we see that
$\psi_2(v_i)\neq \psi_2(\alpha)$. Since $|\Dc_{1}|\geq 2$,
performing type III swaps between the $V_i\in \Dc_1$, we conclude
from Lemma \ref{lem-Pertebation-I}.3 that $\psi_2$ is constant on
each $V_i$, whence $\psi_2(v_i)\neq \psi_2(\alpha)$ is impossible.
Thus $\psi(z)=\alpha$ for all $z|V_i$ with $V_i\in \Dc_1$,
completing the proof.
\end{proof}

\medskip

Lemma \ref{lem-newstuff} allows us to conclude detailed information concerning the values of $\psi$ on $W_0^{(1)}$. Depending on $\sigma(W_j)$ and $\sigma(W_0)$, the appropriate part of Lemma \ref{lem-Pertebation-I}, \ref{lem-Pertebation-II} or \ref{lem-Pertebation-III} will ensure that one of the hypotheses in 1, 2, or 3 holds.

\begin{lemma}\label{lem-newstuff}
Let $W\in \Omega_0$ and  $W_j\in \A_2^*$ be such
that there are $Y|W_j$ and $X|\Wo$ with $|X|=|Y|$ and
$\epsilon'(X,Y)\notin \{1,n\}$, and set
\[
\Dc = \{W'\in \Omega'\mid W'\mbox{ is the result of performing a
type II swap between } X|W_0\mbox{ and } Y|W_j\} \,.
\]
\begin{enumerate}
\item If $\sigma(W'_j)-\sigma(W_j)=0$, for all $W'\in \Dc$, then $|\supp(\psi(W_0^{(1)}))|=1$.

\item If $\sigma(W'_j)-\sigma(W_j)\in \langle f_i \rangle$, where $i\in \{1,2\}$, for all $W'\in \Dc$, then $|\supp(\psi_{3-i}(W_0^{(1)}))|=1$.

\item If $\sigma(W'_j)-\sigma(W_j)\in \{0, F\}$, for all $W'\in \Dc$, where $F\in \Ker(\varphi)$, then $\supp(\psi(W_0^{(1)}))=\{\gamma,\beta\}$ for some $\gamma,\,\beta\in \Ker(\varphi)$ with $\gamma-\beta\in \{0,\pm F\}$.
\end{enumerate}
\end{lemma}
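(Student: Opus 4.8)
The plan is to analyze how a type II swap between $X\t W_0$ and $Y\t W_j$ affects the sum $\sigma(W_0')$ of the new $W_0$-block, and then feed the resulting constraint into one of the structural exchange Lemmas \ref{lem-basic-exchange} or \ref{lem-basic-exchange-ii}. First I would recall from \eqref{exchange-setup-II} that, upon performing a type II swap between $X\t W_0$ and $Y\t W_j$ using a subsequence $R\t W_0^{(1)}$ of length $n-\epsilon'(X,Y)$, we have
\[
\sigma(W_j')=\sigma(W_j)+\epsilon(X,Y)ne_1+\sigma(\psi(X))-\sigma(\psi(Y))+\sigma(\psi(R)),
\]
so that
\[
\sigma(W_j')-\sigma(W_j)=\epsilon(X,Y)ne_1+\sigma(\psi(X))-\sigma(\psi(Y))+\sigma(\psi(R)).
\]
Here $X$ and $Y$ are \emph{fixed} throughout (so $\epsilon'(X,Y)$, $\epsilon(X,Y)$, $\sigma(\psi(X))$, $\sigma(\psi(Y))$ are all constant), and the only freedom in choosing $W'\in\Dc$ is the choice of the subsequence $R\t W_0^{(1)}$ of fixed length $|R|=n-\epsilon'(X,Y)$. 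The crucial point is that $\epsilon'(X,Y)\notin\{1,n\}$ guarantees $1\le|R|\le n-2$, i.e. $|R|\in[1,|W_0^{(1)}|-1]$ (recall $|W_0^{(1)}|=\mathsf v_{e_1}(S_1\text{-part})$ terms, and since $X$ occupies at most... in fact $|W_0^{(1)}|\ge |X|+|R|$ is not quite it — one checks $|W_0^{(1)}|=n-\epsilon'(X,Y)+|X|\ge |R|+1$, using that $X$ lies in $W_0$ but $R$ is chosen from $W_0^{(1)}$ disjointly). Thus $R$ ranges over \emph{all} subsequences of $W_0^{(1)}$ of a fixed length $k:=|R|$ with $k\in[1,|W_0^{(1)}|-1]$, and hence $\sigma(\psi(R))$ ranges over $\Sigma_k(\psi(W_0^{(1)}))$.

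Now I would translate each hypothesis into a statement about $\Sigma_k(\psi(W_0^{(1)}))$ and apply Lemma \ref{lem-basic-exchange-ii}. For part 1: the hypothesis $\sigma(W_j')-\sigma(W_j)=0$ for all $W'\in\Dc$ forces $\sigma(\psi(R))$ to be constant as $R$ ranges over all $k$-subsequences of $W_0^{(1)}$, i.e. $|\Sigma_k(\psi(W_0^{(1)}))|\le 1$; then Lemma \ref{lem-basic-exchange-ii}.3 gives $\psi(W_0^{(1)})=g^{|W_0^{(1)}|}$ for some $g$, i.e. $|\supp(\psi(W_0^{(1)}))|=1$. For part 3: the hypothesis says $\sigma(W_j')-\sigma(W_j)\in\{0,F\}$, so $\sigma(\psi(R))$ takes at most two values, whence $|\Sigma_k(\psi(W_0^{(1)}))|\le 2$; moreover if these two values occur they differ by $\pm F$. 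If $|\Sigma_k(\psi(W_0^{(1)}))|\le 1$ we are again in the constant case and $\supp(\psi(W_0^{(1)}))$ is a single point $\gamma=\beta$; otherwise $|\Sigma_k(\psi(W_0^{(1)}))|=2$ and I must rule out the possibility that it is a coset of a subgroup of order two. Since $\Ker(\varphi)=nG\cong C_m\oplus C_m$ with $m$ odd has no element of order two, no subgroup of order two exists, so the exceptional case of Lemma \ref{lem-basic-exchange-ii}.2 cannot arise; thus Lemma \ref{lem-basic-exchange-ii}.2 yields $\psi(W_0^{(1)})=g^{|W_0^{(1)}|}$ or $g^{|W_0^{(1)}|-1}h$, and in either case $|\supp(\psi(W_0^{(1)}))|\le 2$, say $\supp(\psi(W_0^{(1)}))=\{\gamma,\beta\}$; finally, picking $R,R'$ of length $k$ differing in exactly one term (one containing $\gamma$, one containing $\beta$, possible since $k\in[1,|W_0^{(1)}|-1]$) shows $\gamma-\beta=\sigma(\psi(R))-\sigma(\psi(R'))\in\{0,\pm F\}$, as claimed.

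For part 2, I would argue slightly differently: passing to the quotient $\Ker(\varphi)/\langle f_i\rangle$ (equivalently, applying $\psi_{3-i}$), the hypothesis $\sigma(W_j')-\sigma(W_j)\in\langle f_i\rangle$ says exactly that $\psi_{3-i}(\sigma(\psi(R)))=\sigma(\psi_{3-i}(R))$ is constant as $R$ ranges over all $k$-subsequences of $W_0^{(1)}$. Hence, working with the sequence $\psi_{3-i}(W_0^{(1)})$ over the cyclic group $\langle f_{3-i}\rangle$, we get $|\Sigma_k(\psi_{3-i}(W_0^{(1)}))|\le 1$, and Lemma \ref{lem-basic-exchange-ii}.3 gives that $\psi_{3-i}(W_0^{(1)})$ is constant, i.e. $|\supp(\psi_{3-i}(W_0^{(1)}))|=1$, as desired.

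The main obstacle I anticipate is purely bookkeeping: verifying carefully that $|R|$ indeed lies in the admissible range $[1,|W_0^{(1)}|-1]$ so that Lemma \ref{lem-basic-exchange-ii} applies, and that $R$ really does range over \emph{all} subsequences of that length (the definition of a type II swap allows $R$ to be \emph{any} length-$(n-\epsilon'(X,Y))$ subsequence of $W_0^{(1)}$, so this is built in, but one must make sure $X$ and $R$ can be chosen disjointly inside $W_0$, which follows from $X\t W_0^{(2)}$ being optional — in fact $X\t W_0$ and $R\t W_0^{(1)}$, and the length count $|W_0^{(1)}|=n-\epsilon'(X,Y)+|\gcd(X,W_0^{(1)})|\ge |R|+1$ when $\epsilon'(X,Y)\le n-1$; note the strict inequality needs $\epsilon'(X,Y)\ge ?$ — here is where $\epsilon'(X,Y)\notin\{1\}$ is used, giving $|R|=n-\epsilon'(X,Y)\le n-2<|W_0^{(1)}|$ once one knows $|W_0^{(1)}|\ge n-1$, which holds because $\mathsf v_{e_1}$ in $\varphi(W_0)=e_1^{n-1}\prod(x_\nu e_1+e_2)$ already forces $|W_0^{(1)}|\ge n-1$). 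Once this range check is in place, the rest is a direct invocation of the two exchange lemmas together with the absence of $2$-torsion in $\Ker(\varphi)$.
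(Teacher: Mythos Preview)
Your approach is essentially identical to the paper's: translate the hypothesis into a bound on $|\Sigma_k(\psi(W_0^{(1)}))|$ for $k=n-\epsilon'(X,Y)$, then invoke Lemma~\ref{lem-basic-exchange-ii}. Two small points are worth tightening.

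First, in part~3 you invoke Lemma~\ref{lem-basic-exchange-ii}.2, which requires $k\in[2,|S|-2]$. Here $|W_0^{(1)}|=n-1$ exactly (from the structure of $\varphi(W_0)$), and $k=n-\epsilon'(X,Y)$ ranges over $[1,n-2]$, so the endpoints $k=1$ and $k=n-2$ fall outside the range for part~2. The paper instead uses Lemma~\ref{lem-basic-exchange-ii}.1 (valid for $k\in[1,|S|-1]$), which already gives $|\supp(\psi(W_0^{(1)}))|\le 2$; since you only use this weaker conclusion anyway (not the $g^{|S|-1}h$ structure), switching to part~1 fixes the gap with no change to the rest of your argument. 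Your final step, comparing two $R,R'$ differing in a single term to extract $\gamma-\beta\in\{0,\pm F\}$, is exactly what the paper does in its displayed equation.

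Second, your bookkeeping about $|W_0^{(1)}|$ is more tangled than it needs to be: by definition of a type~II swap, $X\t W_0^{(2)}$ and $R\t W_0^{(1)}$ are automatically disjoint, and $|W_0^{(1)}|=n-1$ directly from $\varphi(W_0)=(me_1)^{n-1}\prod_\nu(x_\nu me_1+me_2)$, so $1\le |R|\le n-2 < |W_0^{(1)}|$ is immediate.
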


\begin{proof}
1. By hypothesis, there is only one possibility for $\sigma(\psi(R))$, where $R|W_0^{(1)}$ is any subsequence with $|R|=n-\epsilon'(X,Y)$. Furthermore, we have $1\leq |R|\leq n-2<|\psi(W_0^{(1)})|$, and thus 1 follows from Lemma \ref{lem-basic-exchange-ii}.3 applied to $\psi(W_0^{(1)})$.

\smallskip
2. The argument is analogous to that of item 1, using the group
$\Ker (\varphi) /\langle f_i\rangle\cong \langle f_{3-i}\rangle$ in
place of $\Ker(\varphi)$.

\smallskip
3. By the arguments for item 1, replacing Lemma \ref{lem-basic-exchange-ii}.3 by Lemma \ref{lem-basic-exchange-ii}.1, we conclude that  $\psi(W_0^{(1)})=\gamma^l\beta^{n-1-l}$ (say), where $l\geq n-1-l\geq 1$ and $\gamma\neq \beta$ (else the lemma is complete); moreover,
$$\epsilon(X,Y)ne_1
+\sigma(\psi(X))-\sigma(\psi(Y))+\min\{t,\,l\}\cdot\gamma
+(t-\min\{t,\,l\})\cdot\beta+\{0,\beta-\gamma\} =\{0,\,F\},$$ where
$t=n-\epsilon'(X,Y)$. Thus $\beta-\gamma=\pm F$, as desired.
\end{proof}

\medskip

The following lemma encapsulates an alignment argument for the
$\iota$ values that forces them to live in near disjoint intervals.
It will be a key part of the more difficult portions of CLAIM C.

\begin{lemma}\label{lem-aligment}
Let $W\in \Omega_0$, let $\Dc\subset \A^*_2$ be nonempty, and let
$Z|\Wo$ be nontrivial. For $x|S$, let $\psi_0(x)=\psi(x)$, and for
$x\in \Ker(\varphi)$, let $\psi_0$ be the identity map. Let $i\in
\{0,1,2\}$. If $\psi_i(ne_1)\neq 0$ and \be\label{alignment-hyp}
\psi_i(x)-\psi_i(y)+\psi_i(\epsilon(x,y)ne_1)=0\ee for every $x|Z$
and $y|U\in \Dc$, then there exist intervals $J_1$, $J_2$ and $J_3$
of $\Z$ with either \ber\label{vougue1}
\supp(\iota(\prod_{U\in \Dc}U))\subset J_3,\; \supp(\iota(Z))\subset
J_1\cup J_2,&\mbox{ and }& \max J_1\leq \min J_3\leq \max J_3< \min
J_2,\,\mbox{ or}\\ \label{vogue2} \supp(\iota(Z))\subset
J_3,\;\supp(\iota(\prod_{U\in \Dc}U))\subset J_1\cup J_2,&\mbox{ and
}& \max J_1<\min J_3\leq \max J_3\leq \min J_2.\eer Moreover, $I$
can be chosen such that{\rm \,:} \begin{enumerate}
                   \item $\min I$ is congruent to an element in $\iota(Z)$ modulo $n$,
                   \item $\iota(x)\leq \iota(y)$ and $\epsilon(x,y)=0$ for all $x|Z$ and $y|U\in \Dc$, and
                   \item $\psi_i(x)=\psi_i(y)$ for all $xy|Z\prod_{U\in \Dc}U$.
                 \end{enumerate}
\end{lemma}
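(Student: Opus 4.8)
The plan is to exploit the hypothesis (\ref{alignment-hyp}) the same way it was exploited implicitly in Lemma \ref{lemma-for-two-opts}: it says that, after a type II swap between $x|Z$ and $y|U$, the change in $\sigma(W'_0)$ in the $\psi_i$-direction is forced, so the relevant perturbation lemma (Lemma \ref{lem-Pertebation-I}, \ref{lem-Pertebation-II} or \ref{lem-Pertebation-III}, according to the $\C$-class of $W_0$ and $U$ and whether $W\in \Omega_0^u$ or $\Omega_0^{nu}$) only leaves the option $\sigma(W'_0)=\sigma(W_0)$. First I would fix $f_1,f_2$ compatibly with $\sig(W)$ (and with $\sigma(W_0)$ when $W_0\in \C_0$), observe $\psi_i(ne_1)\neq 0$ means the chosen basis direction $f_i$ is nonzero, and then note that for a single element $x|Z$ and single $y|U$ the type II swap has $|R|=n-\epsilon'(x,y)$ and $\sigma(W'_0)=\sigma(W_0)+\epsilon(x,y)ne_1+\psi(x)-\psi(y)+\sigma(\psi(R))$. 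Projecting to $\langle f_i\rangle$ and using (\ref{alignment-hyp}), the $f_i$-component of the change is exactly $\sigma(\psi_i(R))$; since the perturbation lemma pins $\sigma(W'_0)$, this $\sigma(\psi_i(R))$ is forced to a single value as $R$ ranges over $(n-\epsilon'(x,y))$-term subsequences of $W_0^{(1)}$, which (exactly as in Lemma \ref{lem-newstuff}) forces $\psi_i$ to be constant on $W_0^{(1)}$ whenever $1\le n-\epsilon'(x,y)\le n-2$, i.e. whenever $\epsilon'(x,y)\notin\{1,n\}$.

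The heart of the argument is the alignment/interval dichotomy. Having reduced to the case that $\epsilon'(x,y)\in\{1,n\}$ for \emph{every} $x|Z$, $y|U\in\Dc$ (since otherwise $\psi_i$ is constant on $W_0^{(1)}$ and one can quickly finish by a direct bookkeeping), I would read $\epsilon'(x,y)$ back in terms of $\iota$: $\epsilon'(x,y)\in\{1,n\}$ means $\iota(x)-\iota(y)\equiv 0$ or $1\pmod n$. Write $a=\iota(x)$, $b=\iota(y)$ with $a,b\in I$, an interval of length $n$. Then $a-b\in\{0,1,1-n,-n\}\cap[-(n-1),n-1]=\{0,1,1-n\}$ (and $-n$ only if $a-b=-n$, impossible for $a,b\in I$ distinct from the endpoints unless they are the two endpoints; this is the kind of edge case I'd handle by a suitable initial choice of $I$, per conclusion (1)). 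The constraint $a-b\in\{0,1,1-n\}$ for all pairs means: the $\iota$-values of $Z$ and of $\prod_{U\in\Dc}U$, viewed in $I$, split into a ``low'' block and a ``high'' block separated by a gap of size $\ge n-1$ spanned by the other sequence, which is precisely what (\ref{vougue1}) and (\ref{vogue2}) assert once one records which of $Z$ or $\prod U$ lies in the middle. I would make this precise by: (i) choosing $I$ via conclusion (1) so that $\min I\in\iota(Z)\pmod n$, which normalizes away the endpoint ambiguity and makes $\epsilon(x,y)=0$ for the relevant pairs, giving conclusion (2); (ii) deducing the interval containments $J_1,J_2,J_3$ from the pairwise constraints by an elementary extremal argument on $\min$ and $\max$ of the $\iota$-supports; and (iii) finally, with $\epsilon(x,y)=0$ for all relevant $x,y$, (\ref{alignment-hyp}) collapses to $\psi_i(x)=\psi_i(y)$ for all such pairs, which after a type III swap between blocks of $\Dc$ (or directly when $|\Dc|=1$) propagates to conclusion (3).

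The main obstacle is bookkeeping the case distinction on which perturbation lemma applies and ensuring that in every case the forced outcome is $\sigma(W'_0)=\sigma(W_0)$ (and not, say, $\sigma(W_0)$ shifted by $f_2-f_1$, which can genuinely occur in Lemma \ref{lem-Pertebation-II}.3 or Lemma \ref{lem-Pertebation-III}.4/5). Concretely, when $W\in\Omega_0^{nu}$ and $W_0\in\C_1$, a type II swap can produce $\sigma(W'_0)=f_2$, and then the $f_i$-projection hypothesis (\ref{alignment-hyp}) must be used to rule this out or to absorb it; the cleanest way is to note that (\ref{alignment-hyp}) says the net $f_i$-drift is $\sigma(\psi_i(R))$, and a shift by $f_2-f_1$ would change \emph{both} coordinates, contradicting (\ref{alignment-hyp}) in the $f_i$-coordinate unless $i$ is chosen so that $\psi_i(f_2-f_1)=0$, which cannot happen since both $f_1,f_2$ components of $f_2-f_1$ are nonzero. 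So the exceptional $\sigma(W'_0)$ values are incompatible with (\ref{alignment-hyp}), and we always land on $\sigma(W'_0)=\sigma(W_0)$, which is what makes the whole mechanism go through. The other mild nuisance is the endpoint/wrap-around issue for $I$, which is handled once and for all by invoking the freedom to re-choose $I$ as in conclusion (1); after that re-choice every ``$\equiv\pmod n$'' above becomes an honest equality or a genuine separation in $\Z$, and the interval statements (\ref{vougue1})--(\ref{vogue2}) follow.
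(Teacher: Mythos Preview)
Your proposal misreads the lemma. Hypothesis (\ref{alignment-hyp}) is \emph{given}; it is not something to be derived or justified via the perturbation Lemmas \ref{lem-Pertebation-I}--\ref{lem-Pertebation-III}. In the paper's applications those perturbation lemmas are used \emph{upstream} to produce an instance of (\ref{alignment-hyp}); once (\ref{alignment-hyp}) is in hand, the proof of the present lemma is a purely combinatorial argument about $\iota$-values, with no swaps whatsoever. Your first paragraph---invoking type II swaps, varying $R|W_0^{(1)}$, and aiming for ``$\psi_i$ constant on $W_0^{(1)}$''---is therefore working toward a conclusion that is neither needed nor relevant: the lemma asserts nothing about $W_0^{(1)}$, and your parenthetical ``one can quickly finish by a direct bookkeeping'' once $\psi_i$ is constant on $W_0^{(1)}$ has no content, since knowing $\psi_i$ on $W_0^{(1)}$ says nothing about the $\iota$-values on $Z$ and $\prod_{U\in\Dc}U$ that the conclusion concerns.

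The actual mechanism is much simpler and you have the wrong $\epsilon$. For single terms $x,y$ one always has $\epsilon(x,y)\in\{0,1\}$, with $\epsilon(x,y)=0$ iff $\iota(x)\le\iota(y)$; plugging this into (\ref{alignment-hyp}) gives $\psi_i(x)=\psi_i(y)$ when $\iota(x)\le\iota(y)$ and $\psi_i(x)=\psi_i(y)-\psi_i(ne_1)$ when $\iota(x)>\iota(y)$, and since $\psi_i(ne_1)\ne 0$ these two outcomes are distinct. The interval dichotomy (\ref{vougue1})/(\ref{vogue2}) then drops out by asking whether some $x|Z$ straddles two terms $y,y'$ of $\prod_{U\in\Dc}U$ (i.e.\ $\iota(y')<\iota(x)\le\iota(y)$): if no such $x$ exists, (\ref{vougue1}) holds trivially; if one does, the two distinct $\psi_i$-values partition $\prod_{U\in\Dc}U$ into a low piece and a high piece with all of $Z$ sandwiched in between, which is (\ref{vogue2}). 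Your attempted reduction to ``$\epsilon'(x,y)\in\{1,n\}$'' confuses $\epsilon'$ (the residue in $[1,n]$) with $\epsilon$ (the value in $\{0,1\}$ appearing in (\ref{alignment-hyp})), and is in any event vacuous: (\ref{alignment-hyp}) already pins $\psi_i(x)-\psi_i(y)$ completely for \emph{every} pair, so there is no residual case to eliminate. Your third paragraph, worrying about which exceptional values of $\sigma(W'_0)$ the perturbation lemmas might allow, addresses a difficulty that simply does not arise here.
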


\begin{proof}

Observe, for $xy|S_2$, that \be\label{align-eps-choice} \epsilon(x,y)=\left\{
                                            \begin{array}{ll}
                                              0, & \iota(x)\leq \iota(y); \\
                                              1, & \iota(x)> \iota(y).
                                            \end{array}
                                          \right.\ee
Consequently, we conclude from (\ref{alignment-hyp}) that \be\label{gal1}\psi_i(x)=\psi_i(y),\ee for all $x|Z$ and
$y|U\in \Dc$ with $\iota(x)\leq \iota(y)$, and that
\be\label{gal2}\psi_i(x)=\psi_i(y)-\psi_i(ne_1),\ee for all $x|Z$ and $y|U\in\Dc$
with $\iota(x)>\iota(y)$.

If there do not exist $x|Z$ and $yy'|\prod_{U\in \Dc}U$ with
$\iota(x)\leq \iota(y)$ and $\iota(x)>\iota(y')$, then, for every
$x|Z$, we have either $\iota(x)\leq \iota(y)$ for all $y|\prod_{U\in
\Dc}U$, or $\iota(x)>\iota(y)$ for all $y|\prod_{U\in \Dc}U$. Thus
we see that (\ref{vougue1}) holds (with $J_3 = [\min
(\supp(\iota(\prod_{U\in \Dc}U))), \max (\supp(\iota(\prod_{U\in
\Dc}U)))]$, $J_1$ being any nonempty interval containing those
$\iota(x)$ with $\iota(x)\leq \iota(y)$ for all $y|\prod_{U\in
\Dc}U$ and $\max J_1 \le \min J_3$, and $J_2$ being any nonempty
interval containing those $\iota(x)$ with $\iota(x)> \iota(y)$ for
all $y|\prod_{U\in \Dc}U$ and $\min J_2 > \max J_3$).

Now instead let $x|Z$ and $yy'|\prod_{U\in \Dc}U$ with $\iota(x)\leq
\iota(y)$ and $\iota(x)>\iota(y')$, and factor $\prod_{U\in
\Dc}U=J'_1J'_2$, where $J'_1$ are those terms $a|\prod_{U\in \Dc}U$
with $\iota(a)<\iota(x)$, and $J'_2$ are those terms $b|\prod_{U\in
\Dc}U$ with $\iota(b)\geq \iota(x)$. By assumption, both $J'_i$ are
nontrivial. Moreover, from (\ref{gal1}) and (\ref{gal2}) and
$\psi_i(ne_1)\neq 0$, we see that
\be\label{guy1}\psi_i(b)=\psi_i(x)\ee and
\be\label{guy2}\psi_i(a)=\psi_i(x)+\psi_i(ne_1)\neq \psi_i(x),\ee
for all $a|J'_1$ and $b|J'_2$. Thus $\psi_i$ is constant on $J'_1$
and also on $J'_2$ but the two values assumed are distinct. If there
were $x'|Z$ such that $\iota(x')\leq \max (\supp(\iota(J'_1)))$,
then by (\ref{gal1}) and (\ref{guy1}) we would conclude that
$\psi_i(x')=\psi_i(b)=\psi_i(x)$, where $b$ is any term of $J'_2$,
while by applying (\ref{gal1}) and (\ref{guy2}) between $x'$ and
$\max(\supp(\iota(J'_1))):=a_0$, we would conclude that
$\psi_i(x')=\psi_i(a_0)=\psi_i(x)+\psi_i(ne_1)\neq \psi_i(x)$, a
contradiction to what we have just seen. We likewise obtain a
contradiction if there were $x'|Z$ such that $\iota(x')>
\min(\supp(\iota(J'_2)))$. Therefore we see that (\ref{vogue2})
holds with $ J_1 = [\min(\supp(\iota(J_1'))), \max(\supp(\iota(J_1')))]$, $J_2 = [\min(\supp(\iota(J_2'))),\max(\supp(\iota(J_2')))]$, and $J_3=[\min(\supp(\iota(Z))),\max(\supp(\iota(Z)))]$.

Choosing $I$ such that $\min I$ is congruent  to $\min
(\supp(\iota(Z)))$ modulo $n$, if either (\ref{vogue2}) holds or else (\ref{vougue1}) holds with
$\supp(\iota(Z))\cap J_2 = \emptyset$, and congruent to $\min
(\supp(\iota(Z))\cap J_2)$ otherwise, the remaining properties
follow in view of (\ref{alignment-hyp}) and
(\ref{align-eps-choice}).
\end{proof}

\medskip
Now we choose a product decomposition $W \in \Omega_0 $, and if
$\Omega^u_0 \neq \emptyset$, we assume that $W\in \Omega^u_0$.

\medskip
\centerline{{\bf CLAIM A:} \ $\mathsf h(S_1)\geq |S_1|-1$.}
\medskip

\begin{proof}
We need to show that there exists $x_0|S_1$ such that
$\psi(x)=\psi(y)$ for all $xy|x_0^{-1}S_1$. We divide the proof into
four main cases. In many of the cases, we obtain partial works towards showing $\mathsf h(S_1)= |S_1|$, which will later be utilized in CLAIM B.

\smallskip

\noindent CASE 1: \ $\Omega_0^u\neq \emptyset$, \ $|\A_1|\geq 2$ \
and \ $|\C_1\cap \A_1|\geq 1$.

In this case, we will moreover show that $\mathsf h(S_1)=|S_1|$
unless $|\A_1\cap \C_0|=1$ or $|\A_1\cap \C_1|=1$, and that
$|\supp(\psi(U))|>1$ for $U\in \A_1\cap \C_i$, where $i\in
\{1,\,2\}$, is only possible when $|\A_1\cap \C_i|=1$.

If $U,\,V\in \A_1$ are distinct, then we can perform a type I swap
between $U$ and $V$, and  by (\ref{exchange-setup-I}) and Lemma
\ref{lem-Pertebation-I}, we conclude that \be\label{pos-swaps-I}
                   \begin{array}{ll}
                     \sigma(\psi(X))-\sigma(\psi(Y))=0, & \hbox{if } U,\,V\in \C_1\\
                     \sigma(\psi(X))-\sigma(\psi(Y))\in \{0,\,(1-C)f_1-f_2\}, & \hbox{if } U\in \C_1,\,V\in \C_0 \hbox{ and } \sigma(V)=Cf_1+f_2\\
                     \sigma(\psi(X))-\sigma(\psi(Y))\in \langle f_1 \rangle, & \hbox{if } U,\,V\in \C_0,
                   \end{array}\ee
for $X|U$ and $Y|V$ with $|X|=|Y|$.

If $|\A_1\cap \C_0|\geq 2$, then using (\ref{pos-swaps-I}) (running
over all $X$ and $Y$ with $|X|=|Y|=1$), we conclude that
$\psi(x)-\psi(y)\in \langle f_1 \rangle$ for all $x$ and $y$
dividing a block from $\A_1\cap \C_0$.

If $|\A_1\cap \C_1|\geq 2$, then using (\ref{pos-swaps-I}) (running over all $X$ and $Y$ with $|X|=|Y|=1$) and Lemma \ref{lem-basic-exchange}.1, we conclude that $\psi(x)=\psi(y)$ for all $x$ and $y$ dividing a block from $\A_1\cap \C_1$.

If $U\in \A_1\cap \C_1$ and $V\in \A_1\cap \C_0$ with $U$ and $V$
distinct, then, using (\ref{pos-swaps-I}) (running over all $X$ and
$Y$ with $|X|=|Y|\leq 2\leq n-1$) and Lemma
\ref{lem-basic-exchange}.3, we conclude that $\psi(x)=\alpha$ (say) for all $x|x_0^{-1}UV$, for some $x_0|UV$; moreover, $\psi(x_0)=\alpha$ or $\alpha\pm ((1-C)f_1-f_2)$.

Suppose $x_0|U$ and $\psi(x_0)\neq \alpha$. Then in view of the
fourth paragraph of CASE 1, we see that $|\A_1\cap \C_1|=1$. Thus
performing type I swaps between $U$ and all possible $V\in \A_1\cap
\C_0$ completes CLAIM A, for $n\geq 5$ or $U\neq W_0$, and, when
$n=3$ and $U=W_0$, we instead conclude that either
$\psi(V)=\alpha^n$ or $\psi(V)=\beta^n$, where
$\psi(W_0^{(1)})=\alpha\beta$, for all $V\in \A_1\cap \C_0$.
However, if there are $V,\,V'\in \A_1\cap \C_0$ with
$\psi(V)=\alpha^n$ and $\psi(V')=\beta^n$ and $\alpha\neq \beta$, then (\ref{pos-swaps-I})
implies that $\beta-\alpha=(1-C)f_1-f_2$ and
$\alpha-\beta=(1-C')f_1-f_2$, where $\sigma(V)=Cf_1+f_2$ and
$\sigma(V')=C'f_1+f_2$, from which we conclude that
$(2-C'-C)f_1-2f_2=0$, contradicting that $m\geq 3$. So we may
instead assume $x_0|V$.

In this case, in combination with the results of the previous
paragraphs, we find that there is at most one $v_i|V_i$, for each
$V_i\in \A_1\cap C_0$, such that $\psi(x)=\alpha$ for all $x|S_1$
not equal to any $v_i$. In this scenario, CLAIM A is done unless we
have two distinct $V_1,\,V_2\in \A_1\cap \C_0$ such that
$\psi(v_1)\neq \alpha$ and $\psi(x)=\alpha$ for all
$x|v_1^{-1}v_2^{-1}UV_1V_2$. However, applying a type I swap
between $y|U$ and $v_1|V_1$, we conclude from (\ref{pos-swaps-I})
that $\alpha-\psi(v_1)= (1-C)f_1-f_2\notin \langle f_1 \rangle$, for
some $C\in \Z$, which, in view of $\alpha\psi(v_1)|\psi(V_1)$,
contradicts the conclusion of the third paragraph of CASE 1. This
completes CASE 1.

\smallskip
\noindent
CASE 2: \ $|\A_1|=1$.

In this case, we will show that $\mathsf h(S_1)=|S_1|$.

Suppose $W_0\in \C_0$. Then we may choose $f_2$ such that
$\sigma(W_0)=f_1+f_2$, and if $\Omega_0^u=\emptyset$, such that
$\sig(W)=f_1^{m-1}f_2^{m-1}(f_1+f_2)$ also. Let $\Dc_1$ be those
blocks $W_i$ with $\sigma(W_i)=f_1$ and let $\Dc_2$ be all other
blocks from $\A_2^*$. Applying Lemma \ref{lemma-for-two-opts}, we
see that Lemma \ref{lemma-for-two-opts}(ii) must hold, else
$ge_1+e_2+\alpha$ will have multiplicity at least $mn-1$ in $S$, as
desired. Performing a type II swap between the $X|W_0$ and $Y|W_j$
given by Lemma \ref{lemma-for-two-opts}(ii), we conclude, from
Lemmas \ref{lem-newstuff}.2 and either \ref{lem-Pertebation-I}.2
(since $\sigma(W_0)=f_1+f_2$) or \ref{lem-Pertebation-II}.4, that
$\psi_1$ is constant on $W_0^{(1)}$. However, reversing the roles of
$\Dc_1$ and $\Dc_2$ and repeating the above argument using Lemmas
\ref{lem-Pertebation-I}.3 and \ref{lem-Pertebation-II}.5 in place of
Lemmas \ref{lem-Pertebation-I}.2 and \ref{lem-Pertebation-II}.4, we
conclude that $\psi_2$ is also constant on $W_0^{(1)}$, whence
$\psi$ is constant on $W_0^{(1)}$, completing the proof of CLAIM A.
So we may assume $W_0\notin \C_0$.

Suppose $\Omega_0^u=\emptyset$. Then we may w.l.o.g. assume
$\sig(W)=f_1^{m-1}f_2^{m-1}(f_1+f_2)$, that $\C_1$ consists of those
blocks $W_i$ with $\sigma(W_i)=f_1$, and that $\sigma(W_0)=f_1$. Let
$\Dc_1=\C_2$ and $\Dc_2=\C^*_1\cup \C_0$. Applying Lemma
\ref{lemma-for-two-opts}, we see that Lemma
\ref{lemma-for-two-opts}(ii) must hold, else there will be a term
with multiplicity at least $mn-1$ in $S$, as desired. Thus Lemmas
\ref{lem-newstuff}.3 and \ref{lem-Pertebation-II}.3 imply that
$\supp(\psi(W_0^{(1)}))=\{\gamma,\,\beta\}$ (say) with
$\beta-\gamma=\pm (f_2-f_1)$ (else CLAIM A follows).

Reversing the roles of $\Dc_1$ and $\Dc_2$ and again applying Lemma \ref{lemma-for-two-opts}, we once more see that Lemma \ref{lemma-for-two-opts}(ii) must hold, else there is a term with multiplicity $mn-1$ in $S$, as desired. Thus Lemma \ref{lem-newstuff}.2 and either Lemma  \ref{lem-Pertebation-II}.1 or \ref{lem-Pertebation-II}.4  imply that $\psi_1$ is constant on $W_0^{(1)}$, contradicting that $\beta-\gamma=\pm (f_2-f_1)$. So we may assume $\Omega_0^u\neq \emptyset$.

Let w.l.o.g. $W_1,\ldots, W_{m-2}$ be the blocks of $\C^*_1\cap
\A_2$, and let $\Dc_1=\C^*_1$ and $\Dc_2=\C_0$.  Apply Lemma
\ref{lemma-for-two-opts}. If Lemma \ref{lemma-for-two-opts}(ii)
holds, then Lemmas \ref{lem-newstuff}.1 and
\ref{lem-Pertebation-I}.1 imply that $\psi$ is constant on
$W_0^{(1)}$, whence  CLAIM A follows. Therefore we may instead
assume $\iota(x)=g$ and $\psi(x)=\alpha$ (say) for all terms
$x|x_0^{-1}\Wo W_1\ldots W_{m-2}$, for some $x_0|\Wo$ with
$\iota(x_0)\equiv g+1\mod n$.

Consider $W_j$ with $j\geq m-1$. If $\iota(W_j)\neq g^n$, then there
exist $x|\Wo$ and $y|W_j$ with $\epsilon'(x,y)\notin \{1,n\}$,
whence Lemmas \ref{lem-newstuff}.3 and \ref{lem-Pertebation-I}.2
imply that $\supp(\psi(W_0^{(1)}))=\{\gamma,\,\beta\}$ (say) with
$\beta-\gamma=\pm F_j$ (else  CLAIM  A follows), where
$F_j=(1-C_j)f_1-f_2$ and $\sigma(W_j)=C_jf_1+f_2$.

If $W_{k}$ is another block with $k\geq m-1$ and $\iota(W_{k})\neq
g^n$, then the above paragraph implies that $\beta-\gamma=\pm
F_{k}$, where $F_{k}=(1-C_{k})f_1-f_2$ and
$\sigma(W_k)=C_{k}f_1+f_2$. Thus, since $m\geq 3$ and
$\beta-\gamma=\pm F_j$, we conclude that $F_j=F_k$ and $C_j\equiv
C_{k}\mod m$. As a result, we see that any two blocks $W_j$ and
$W_{k}$, with $j,\,k\geq m-1$ and  $\iota(W_{k}),\,\iota(W_{j})\neq
g^n$, must have $\sigma(W_j)=\sigma(W_{k})$. Hence, since $W\in
\Omega^u_0$, we conclude that there are at least two distinct blocks
$W_s$ and $W_{r}$ with $s,\,r\geq m-1$ and
$\iota(W_{s})=\iota(W_{r})=g^n$. Performing type III swaps between
$W_0$ and both $W_s$ and $W_{r}$, we conclude from Lemmas
\ref{lem-Pertebation-I}.2 and \ref{lem-basic-exchange}.3 that
$\psi(x)=\alpha$ for all but at most two terms of $W_sW_{r}$, whence
$ge_1+e_2+\alpha$ has multiplicity at least $(m-1)n-1+2n-2\geq mn$
in $S$, contradicting that $S\in \A(G)$ and completing CASE 2.

\smallskip

\noindent CASE 3: \ $\Omega_0^u\neq \emptyset$, \ $|\A_1|\geq 2$ \
and \ $|\C_1\cap \A_1|=0$.

In this case, we will moreover show that $\mathsf h (S_1)=|S_1|$.

We may w.l.o.g. assume $W_1,\ldots,W_{m-1}$ are the blocks in
$\C_1\cap \A_2$. Let $\Dc_1=\C_1$ and $\Dc_2=\C_0^*\cap \A_2$. If
$|\Dc_2|\geq 1$, then we can apply Lemma \ref{lemma-for-two-opts}.
Otherwise, in view of Lemma \ref{lem-Pertebation-I}.2, we may assume
hypothesis (a) holds in Lemma \ref{lemma-for-two-opts}, else
applying CASE 1 to the resulting product decomposition $W'$  would
imply, in view of $|\Dc_2|=0$, that $\psi(x)=\alpha$ (say) for all
$x|W'_i=W_i$ with $i\in [m,2m-2]$, in which case
$\sigma(W'_i)=ne_1+n\alpha$ has multiplicity $m-1$ in $\sig(W')$,
contradicting that $\sig(W')=\sig(W)$ (in view of Lemma
\ref{lem-Pertebation-I}.2) with $W\in \Omega_0^u$. Thus, in either
case Lemma \ref{lemma-for-two-opts} is available. If Lemma
\ref{lemma-for-two-opts}(i) holds, then $ge_1+e_2+\alpha$ is a term
with multiplicity at least $mn-1$ in $S$, as desired. Therefore
there is $X|\Wo$ and $Y|W_j$, for some $j\in [1,m-1]$, such that
$|X|=|Y|$ and $\epsilon'(X,Y)\notin \{1,n\}$. Hence Lemmas
\ref{lem-newstuff}.3 and \ref{lem-Pertebation-I}.2 imply that
$\supp(\psi(W_0^{(1)}))=\{\gamma,\,\beta\}$ (say) with
$\gamma-\beta\in \{0,\pm F\},$ where $F=(C-1)f_1+f_2$ and
$\sigma(W_0)=Cf_1+f_2$. Since $|\A_1|\geq 2$, let $V\in \C^*_0\cap
\A_1$. Performing type I swaps between $W_0$ and $V$, we conclude
from Lemma \ref{lem-Pertebation-I}.3 that $\psi_2$ is constant on
$VW_0^{(1)}$, whence $\gamma-\beta\in \{0,\pm F\}$ implies
$\gamma=\beta$.

Performing type I swaps among the $V\in \C_0\cap \A_1$, we conclude
from Lemma \ref{lem-Pertebation-I}.3 that $\psi_2(x)=\psi_2(\gamma)$
for all $x|V\in \C_0\cap \A_1$. Let
$W'$ be the product decomposition resulting from performing a type
II swap between $X|W_0$ and $Y|W_j$ (with $X$ and $Y$ as given by Lemma \ref{lemma-for-two-opts}(ii) in the previous paragraph). Since $\epsilon'(X,Y)\notin
\{1,n\}$, we conclude that there is a block $W'_k\in \C_1$, with
$k\in \{0,j\}$, having $(e_1+\gamma)|W'_k$. Since $\sig(W')=\sig(W)$ (in view
of Lemma \ref{lem-Pertebation-I}.2), performing type I swaps between
$W'_k$ and each distinct block $V'=V\in \C^*_0\cap \A_1$, we conclude
from Lemma \ref{lem-Pertebation-I}.2 that either $\psi(x)=\gamma$ or
$\psi(x)=\gamma+\sigma(V')-\sigma(W'_k)$, for each $x|V'$. However,
since $W'_k\in \C_1$ and $V'\in \C_0$, it follows that the latter
contradicts that $\psi_2$ is constant on $V|W_0^{(1)}$ with value $\psi_2(\gamma)$. Therefore we
conclude that $\psi(x)=\gamma$ for all $x|V'$, with $V'=V^*\in \C_0\cap
\A_1$, whence $\psi(x)=\gamma$ for all $x|S_1$, as desired,
completing CASE 3.

\smallskip

\noindent CASE 4: \ $\Omega_0^u= \emptyset$ \ and \ $|\A_1|\geq 2$.

We may w.l.o.g. assume $\widetilde{\sigma} (W) =
f_1^{m-1}f_2^{m-1}(f_1+f_2)$, by an appropriate choice of $f_2$,
whence  CLAIM A follows easily by performing type I swaps between
the blocks of $\A_1$ and using Lemmas \ref{lem-Pertebation-III} and
\ref{lem-basic-exchange}. This completes CASE 4.
\end{proof}

In view of CLAIM A, we may assume $S_1=e_1^{|S_1|-1}(e_1+a)$, for
some $a\in \Ker(\varphi)$. Let $y_0=e_1+a$.

\medskip
\centerline{{\bf CLAIM B:} \ $\mathsf h (S_1)=|S_1|$.}

\begin{proof}
We assume by contradiction $a\neq 0$. In view of the partial
conclusions of CLAIM A, we may assume $|\A_1|\geq 2$ (in view of
CASE 2 of CLAIM A), and, if $\Omega_0^u\neq \emptyset$, that
$|\A_1\cap \C_1|\geq 1$ (in view of CASE 3 of CLAIM A). We proceed
in four cases.

\smallskip

\noindent CASE 1: \ $\Omega_0^u\neq \emptyset$ \ and $y_0|U$ for some $U\in \A_1\cap \C_1$.

In view of CASE 1 of CLAIM A, we have $|\A_1\cap \C_1|=1$. Hence, if
$U\neq W_0$, then $W_0\in\C_0$, and performing a type I swap between
$y_0|U$ and some $y|W_0$ results (in view of Lemma
\ref{lem-Pertebation-I}.2) in a new product decomposition $W'$ with
$\sig(W')=\sig(W)$, $U'\in \C_0$, $W_0'\in \C_1$, $y_0|W'_0$ and
$W'$ also satisfying the hypothesis of CASE 1. On the other hand,
if $U=W_0$, then $|\A_1|\geq 2$ and $|\A_1\cap \C_1|=1$ imply that
there is $V\in \A_1^*\cap \C_0$, and performing a type I swap
between $y_0|W_0$ and some $y|V$ results (in view of Lemma
\ref{lem-Pertebation-I}.2) in a new product decomposition $W'$ with
$\sig(W')=\sig(W)$, $W_0'\in \C_0$, $V'\in \C_1$, $y_0|V'$ and $W'$
also satisfying the hypothesis of CASE 1. Thus w.l.o.g. we may
assume $U\neq W_0$. Since $U\in \C_1$ and $\sig(W')=\sig(W)$ with $W'_0\in \C_1$
(with $W'$ as in the second sentence of CASE 1), then, letting
$\sigma(W_0)=Cf_1+f_2$, we see that $a=(1-C)f_1-f_2$.

Let $\Dc_1=\A_2^*(W')\cap \C_1(W')$ and $\Dc_2=\A_2^*(W')\cap
\C_0(W')$. Since $|\A_1\cap \C_1|=1$ and $W'_0\in \C_1$, we have $|\Dc_1|=m-2$, and by CLAIM
A we have $|\Dc_2|\geq 1$ (else $e_1$ is a term with multiplicity at
least $(m+1)n-2\geq mn$, contradicting that $S\in \A(G)$). If Lemma
\ref{lemma-for-two-opts}(ii) holds for $W'$, then Lemmas
\ref{lem-newstuff}.1 and \ref{lem-Pertebation-I}.1 imply that $a=0$,
a contradiction. Therefore Lemma \ref{lemma-for-two-opts}(i) holds
for $W'$. Let $g$ and $\alpha$ be as given by Lemma \ref{lemma-for-two-opts}(i).

Since $|\Dc_2|\geq 1$, let $V\in \A_2^*(W)\cap \C_0(W)$. If
$\iota(V)=g^n$, then, performing type III swaps between $V$ and some
$Z\in\A_2^*\cap \C_1$, and between $V$ and $W_0$, we conclude from
Lemmas \ref{lem-Pertebation-I}.2, \ref{lem-Pertebation-I}.3 and \ref{lem-basic-exchange}.3 that
$\psi(x)=\alpha$ for all $x|V$, whence $ge_1+e_2+\alpha$ has
multiplicity at least $mn-1$ in $S$, as desired. Therefore, in view
of $\iota(\Wo)\equiv g^{n-1}(g+1)\mod n$, we see that there exists
$x|W_0^{(2)}={W_0'}^{(2)}$ and $y|V=V'$ such that
$\epsilon'(x,y)\notin \{1,n\}$. Hence, from Lemmas
\ref{lem-newstuff}.3 (applied to $W'$) and
\ref{lem-Pertebation-I}.2, it follows that $a=\pm((1-C')f_1-f_2)$,
where $\sigma(V)=C'f_1+f_2$. Thus, since $a=(1-C)f_1-f_2$ and $m\geq
3$, we conclude that $C'f_1=Cf_1$ and $\sigma(V)=\sigma(W_0)$. As $V\in
\A_2^*(W)\cap \C_0(W)$ was arbitrary, we see that
$\sigma(V)=Cf_1+f_2$ for all $V\in \A_2(W)\cap \C_0(W)$. On the
other hand, if $Z\in \A_1(W)\cap \C_0(W)$, then, performing type I
swaps between $U$ and $Z$, we conclude from Lemma
\ref{lem-Pertebation-I}.2 that $a=(1-C'')f_1-f_2$, where
$\sigma(Z)=C''f_1+f_2$. Thus $a=(1-C)f_1-f_2$ implies that $C''f_1=Cf_1$,
and now $\sigma(Z)=Cf_1+f_2$ for all $Z\in \A_1(W)\cap \C_0(W)$.
Consequently, $\sigma(Z)=Cf_1+f_2$ for all $Z\in \C_0(W)$,
contradicting that $\mathsf h (\sig(W))<m$. This completes CASE 1.

\smallskip

\noindent CASE 2: \ $\Omega_0^u\neq \emptyset$ \ and $y_0|U$ for some $U\in \A_1\cap \C_0$

Recall that $|\A_1\cap
\C_1|\geq 1$ and $|\A_1|\geq 2$. CASE 1 of CLAIM A and the
hypothesis of CASE 2 further imply that $|\A_1\cap \C_0|=1$. Thus,
if $U\neq W_0$, then $W_0\in \C_1$, and performing a type I swap
between $y_0|U$ and some $y|W_0$ results (in view of Lemma
\ref{lem-Pertebation-I}.2) in a product decomposition $W'$ with
$y_0|W'_0$, $W'_0\in \C_0$, $\sig(W')=\sig(W)$ and $W'$ satisfying
the hypotheses of CASE 2. Thus w.l.o.g. we may assume $U=W_0$.

Since $|\A_1\cap \C_1|\geq 1$, let $V\in \A_1^*\cap \C_1$.
Performing a type I swap between $y_0|W_0$ and some $y|V$, letting
$W'$ be the resulting product decomposition, we conclude from Lemma
\ref{lem-Pertebation-I}.2 that $a=(C-1)f_1+f_2$, where
$\sigma(W_0)=Cf_1+f_2$. Since $|\A_1\cap \C_0|=1$ and $W_0\in \C_0$,
let w.l.o.g. $W_1,\ldots, W_{m-1}$ be the blocks of $\A^*_2\cap
\C_0$. If $x|\Wo$ and $y|W_j$, with $j\in [1,m-1]$ and
$\iota(x)=\iota(y)$, then, performing a type III swap between
$x|W_0$ and $y|W_j$ and between $x|W'_0$ and $y|W'_j$, we conclude
in view of Lemmas \ref{lem-Pertebation-I}.3 and
\ref{lem-Pertebation-I}.2 that $\psi(x)=\psi(y)$; thus, letting
$\Dc_1=\A_2^*\cap \C_0$ and $\Dc_2=\A_2^*\cap \C_1$, we see that
hypothesis (a) holds in Lemma \ref{lemma-for-two-opts}. If Lemma
\ref{lemma-for-two-opts}(i) holds, then $ge_1+e_2+\alpha$ is a term
of $S$ with multiplicity at least $mn-1$, as desired. Therefore
Lemma \ref{lemma-for-two-opts}(ii) holds, whence Lemmas
\ref{lem-newstuff}.2 and \ref{lem-Pertebation-I}.3 imply that $a\in
\langle f_1 \rangle$, contradicting that $a=(C-1)f_1+f_2$. This
completes CASE 2.

\smallskip

Note that if $\Omega_0^u=\emptyset$, then (in view of $|\A_1|\geq
2$) we may w.l.o.g. assume $y_0|U$ with $U\neq W_0$, by an
appropriate type I swap. Moreover, when $\Omega_0^u=\emptyset$, we
will w.l.o.g. assume $\sig(W)=f_1^{m-1}f_2^{m-1}(f_1+f_2)$ with
$\C_1$ consisting of those blocks $W_i$ with $\sigma(W_i)=f_1$.

\smallskip

\noindent CASE 3: \ $\Omega_0^u=\emptyset$ \ and $y_0|U$ for some  $U\in
\A^*_1\cap \C_0$.

We may w.l.o.g. assume $W_0\in \C_1$. Performing a type I swap
between $y_0|U$ and some $y|W_0$, letting $W'$ be the resulting
product decomposition, we conclude from Lemma
\ref{lem-Pertebation-III}.4 that $a=f_2$. Let $\Dc_1=\A_2^*(W')\cap
\C_2(W')$ and let $\Dc_2=A_2^*(W')\cap \C_1(W')$. Observe that
$|\Dc_1|=m-1$, else performing a type I swap between $y_0|U$ and
some $V\in \A_1\cap \C_2$ would imply in view of Lemma
\ref{lem-Pertebation-III}.5 that $a=f_1$, contradicting that
$a=f_2$. If a type III swap between $W'_0$ and some $W'_j\in \Dc_1$
results in a new product decomposition $W''$ with $\sigma(W''_0)\neq
\sigma(W'_0)$, then Lemma \ref{lem-Pertebation-III}.5 implies
$\sigma(W''_0)=f_2$, whence, performing a type I swap between
$y_0|{W''_0}^{(1)}={W'_0}^{(1)}$ and $U''=U'$, we conclude from Lemma
\ref{lem-Pertebation-II}.3 that $-a=f_1-f_2$, contradicting that
$a=f_2$. Thus hypothesis (a) of Lemma \ref{lemma-for-two-opts} holds
for $W'$. If Lemma \ref{lemma-for-two-opts}(i) holds, then
$ge_1+e_2+\alpha$ has multiplicity at least $mn-1$ in $S$, as
desired. Therefore, Lemma \ref{lemma-for-two-opts}(ii) holds, whence
Lemmas \ref{lem-newstuff}.2 and \ref{lem-Pertebation-II}.5 imply
that $a\in \langle f_1 \rangle$, contradicting that $a=f_2$ and
completing CASE 3.

\smallskip

\noindent CASE 4: \ $\Omega_0^u=\emptyset$ \ and $y_0|U\in
\A^*_1$ \ with $U\notin \C_0$.

We may w.l.o.g. assume $U\in \C_1$. If $W_0\in \C_1$, then performing type I swaps between $W_0$ and $U$ would imply, in view of Lemma \ref{lem-Pertebation-III}.1, that $a=0$, a contradiction. Moreover, this also shows that $\A_1\cap \C_1=\{U\}$.

Suppose  $W_0\in \C_2$. Performing a type I swap between $y_0|U$ and
some $y|W_0$, letting $W'$ be the resulting product decomposition,
we conclude from Lemma \ref{lem-Pertebation-II}.3 that $\sig(W')=\sig(W)$, $W'_0\in
\C_1$, $a=f_1-f_2$ and $ne_1=\sigma(U')=f_2$. Let
$\Dc_1=\A_2^*(W')\cap \C_1(W')$ and let $\Dc_2=\A_2^*(W')\cap
C_0(W')$. Since $\A_1\cap \C_1=\{U\}$, we have $|\Dc_1|=m-2$. Since
$ne_1=f_2\neq f_1+f_2$, we have $Z\in\C_0$ with $Z\in \A_2^*$, and thus
$|\Dc_2|\geq 1$. Apply Lemma \ref{lemma-for-two-opts} to $W'$. If
Lemma \ref{lemma-for-two-opts}(ii) holds, then Lemmas
\ref{lem-newstuff}.1 and \ref{lem-Pertebation-II}.1 imply
$\psi_1(a)=0$, contradicting that $a=f_1-f_2$. Therefore Lemma
\ref{lemma-for-two-opts}(i) holds, whence $gne_1+ne_2+n\alpha=\sigma(V)=f_1$, where $V\in \Dc_1$.
If there is a type III swap between $Z'=Z$ and $W'_0$ resulting in a
product decomposition $W''$ with $\sigma(W''_0)\neq \sigma(W'_0)$,
then Lemma \ref{lem-Pertebation-III}.4 implies that
$\sigma(W''_0)=f_1+f_2$, whence, performing a type I swap between
$y_0|W''_0$ and $y|U''=U'$, we conclude from Lemma
\ref{lem-Pertebation-III}.5 that $-a=-f_1$, contradicting that
$a=f_1-f_2$. Therefore hypothesis (a) holds in Lemma
\ref{lemma-for-two-opts} for $W'$ with the roles of $\Dc_1$ and
$\Dc_2$ reversed. Apply Lemma \ref{lemma-for-two-opts} in this case.
If Lemma \ref{lemma-for-two-opts}(ii) holds, then Lemmas
\ref{lem-newstuff}.2 and \ref{lem-Pertebation-II}.4 imply that $a\in
\langle f_2 \rangle$, contradicting that $a=f_1-f_2$. Therefore
Lemma \ref{lemma-for-two-opts}(i) holds, whence
$gne_1+ne_2+n\alpha=\sigma(Z)=f_1+f_2$, contradicting that
$gne_1+ne_2+n\alpha=f_1$. So we may assume instead that $W_0\in
\C_0$.

Performing a type I swap between $y_0|U$ and some $y|W_0$, letting
$W'$ be the resulting product decomposition, we conclude from Lemma
\ref{lem-Pertebation-III}.4 that $\sig(W')=\sig(W)$, $W'_0\in \C_1$, $a=-f_2$, and
$ne_1=\sigma(U')=f_1+f_2$. Let $\Dc_1=\A_2^*(W')\cap \C_2(W')$. If there
is $V\in \A_1\cap \C_2$, then, performing a type I swap between
$y_0|U$ and some $y|V$, we conclude from Lemma
\ref{lem-Pertebation-II}.3 that $a=f_1-f_2$, contradicting that
$a=-f_2$. Therefore $|\Dc_1|=m-1$. Let $\Dc_2=\A_2^*(W')\cap \C_1(W')$.
Since $\A_1\cap \C_1=\{U\}$, we have $|\Dc_2|\geq m-2$. Thus we may
apply Lemma \ref{lemma-for-two-opts} to $W'$. If Lemma
\ref{lemma-for-two-opts}(i) holds, then $ge_1+e_2+\alpha$ is a term
of $S$ with multiplicity at least $mn-1$, as desired. Therefore
Lemma \ref{lemma-for-two-opts}(ii) holds, whence Lemmas
\ref{lem-newstuff}.3 and \ref{lem-Pertebation-II}.3 imply that
$a=\pm(f_1-f_2)$, contradicting that $a=-f_2$. This completes CASE
4.
\end{proof}

\medskip
There exists $e_2' \in e_2+nG$ such that  $(e_1,e_2')$ is a
basis for $G$.  Thus, after changing notation if necessary, we may
suppose that $(e_1, e_2)$ is a basis of $G$. If $g \in G$ and $x, y
\in \Z$ with $g = xe_1 + ye_2$, then we set $\pi_1 (g) = xe_1$ and
$\pi_2 (g) = y e_2$.

\medskip
\centerline{{\bf CLAIM C:} \ There exists \ $x_0|S_2$ \ such that \
$x-y\in \langle e_1 \rangle$ \ for all \ $xy|x_0^{-1}S_2$.}
\medskip

\begin{proof}
We need to show that there exists $x_0|S_2$ such that
$\pi_2(\psi(x))=\pi_2(\psi(y))$ for all $xy|x_0^{-1}S_2$. We divide
the proof into four cases.

\smallskip

\noindent CASE 1: \ $\Omega_0^u\neq \emptyset$ \ and there is \
$U\in \A^*_1\cap \C_1$.

In this case, we have \be \label{aligment-1}ne_1=\sigma(U)=f_1.\ee
Let $V\in
\A^*_2$. Perform type (II) swaps between $W_0$ and $V$. If
$V,\,W_0\in \C_1$, then we conclude from Lemmas
\ref{lem-Pertebation-I}.1 and \ref{lem-basic-exchange}.1 that
$\pi_2(\psi(x))=\alpha_2$ (say) for all $x|V\Wo$. If $V,\,W_0\in \C_0$,
then we conclude, from Lemmas \ref{lem-Pertebation-I}.3 and
\ref{lem-basic-exchange}.1  and (\ref{aligment-1}), that
$\psi_2$ is constant on $V\Wo$, whence (\ref{aligment-1}) further
implies that $\pi_2(\psi(x))=\alpha_2$ for all $x|V\Wo$. If
$|\{V,\,W_0\}\cap \C_1|=1$, then we conclude from Lemmas
\ref{lem-Pertebation-I}.2 and \ref{lem-basic-exchange}.3 that
$\pi_2(\psi(x))=\alpha_2$ for all $z|x_0^{-1}V\Wo$, for some $x_0|V\Wo$. If
$\pi_2(\psi(x_0))\neq \alpha_2$ and $x_0|V$, then pull $x_0$ up into a new
product decomposition $W'$ and assume we began with $W'$ instead of $W$ (note that (\ref{aligment-1}) holds independent of $W'$ and that $\widetilde{\sigma} (W) = \widetilde{\sigma} (W')$ follows by Lemma
\ref{lem-Pertebation-I}.2, so all previous arguments can be applied to $W'$ regardless of whether $\A^*_1(W')\cap \C_1(W')$ is nonempty or not). Doing this for all $V\in \A^*_2$, we conclude that there
is an $x_0|S_2$ such that $\pi_2(\psi(x))=\alpha_2$ for all $x|x_0^{-1}S_2$, completing CASE 1.

\smallskip

\noindent CASE 2: \ $\Omega_0^u\neq \emptyset$ \ and \ $\A_1\cap
\C_1=\{W_0\}$.

Performing type II swaps between $W_0$ and each $U\in \A^*_2\cap
\C_1$, we conclude from Lemmas \ref{lem-basic-exchange}.1 and
\ref{lem-Pertebation-I}.1 that $\pi_2(\psi(x))=\alpha_2$ (say) for
all $x|\Wo U$, with $U\in \A^*_2\cap \C_1$. Let w.l.o.g. $W_1,\ldots,
W_l$ be the blocks in $\A_2\cap \C_0$, and let
$W_{m+1},\ldots,W_{2m-2}$ be the blocks in $\A^*_2\cap \C_1$. Note
$l\geq 1$ else CLAIM C follows by the previous conclusion.
Performing type II swaps between $W_0$ and $W_j$, with $j\in [1,l]$,
we conclude from Lemmas \ref{lem-basic-exchange}.3 and
\ref{lem-Pertebation-I}.2 that $\pi_2(\psi(x))=\alpha_2$ for all
$x|z_j^{-1}W_j$, for some $z_j|W_j$. We may w.l.o.g. assume
$\pi_2(\psi(z_j))\neq \alpha_2$ for $j\in [1, l']$ and
$\pi_2(\psi(z_j))=\alpha_2$ for $j\in [l'+1,l]$. We have $l'\geq 2$
else CLAIM C follows.

Perform a type II swap between $z_1|W_1$ and any term $y|\Wo$, and let $W'$ denote the resulting product decomposition. Since $\pi_2(\psi(z_1))\neq \alpha_2$, we are assured that $\pi_2(\sigma(W_0))\neq \pi_2(\sigma(W'_0))$, and hence $\sigma(W_0)\neq \sigma(W'_0)$. Thus
Lemma \ref{lem-Pertebation-I}.2 implies that $\sig(W')=\sig(W)$, $W'_0\in \C_0$ and $W'_1\in \C_1$.

Now pull the term $z_2|W_2$ up into a new product decomposition
$W''$. Note by Lemma \ref{lem-Pertebation-I}.2 that
$\widetilde{\sigma} (W'') = \widetilde{\sigma} (W)$. If $W''_0\in
\C_1$, then the arguments of the first paragraph show that
$\pi_2(\psi(z_2))=\alpha_2$, contradicting that $l'\geq 2$. Therefore
$W''_2\in \C_1$ instead. However, noting that $yW_0^{(1)}|W_0''$, for
some $y|\Wo$ (since $\sigma(\iota(\Wo))\equiv 1 \mod n$ and
$\sigma(\iota(W'_2))\equiv 0\mod n$), we can still perform the swap
between $y|W''_0$ and $z_1|W''_1=W_1$ described in the previous paragraph,
which results in a new product
decomposition $W'''$ in which the $m$ blocks
$$W'''_1=W'_1,\,W'''_2=W''_2,\,W'''_{m+1}=W_{m+1},\ldots,W'''_{2m-2}=W_{2m-2}$$
all have equal sum $f_1$, contradicting that $S'\in \A(G)$, and
completing CASE 2.

\smallskip

\noindent CASE 3: \ Either \ ($\Omega_0^u\neq \emptyset$ and
$\A_1\cap \C_1 = \emptyset$) \  or \ ($\Omega_0^u=\emptyset$ and
$W_0\notin \C_0$).

If $\Omega_0^u=\emptyset$, we may w.l.o.g. assume $\sig
(W)=f_1^{m-1}f_2^{m-1}(f_1+f_2)$ with $\C_1$ those blocks with sum
$f_1$ and $\C_2$ those blocks with sum $f_2$, and that $W_0\in
\C_2$. Let w.l.o.g. $W_1,\ldots, W_{s}$ be the $s\leq m-1$ blocks of
$\C_1\cap \A^*_2$. Let $\sigma(W_0)=Cf_1+f_2$ and
$F=(C-1)f_1+f_2$. If $\Omega_0^u\neq \emptyset$, then we have
$s=m-1$ by hypothesis. If $s=0$, then $|\A^*_1\cap \C_1|=m-1$,
implying $e_1$ is a term with multiplicity at least $mn-1$ in $S$
(in view of CLAIM B), as desired. Therefore we may assume $s>0$.

We claim, for any $W$ satisfying the hypothesis of CASE 3 and
notated as above (and in fact, if $W\in \Omega_0^{nu}$, we will not
need that $\Omega_0^u=\emptyset$), that
\be\label{all-but-one-pi2}\pi_2(\psi(x_0^{-1}\Wo\prod_{\nu=1}^{s}W_{\nu}))=q_2^{(s+1)n-1}\ee
for some $x_0|\Wo\prod_{\nu=1}^{s}W_{\nu}$ and $q_2\in \Ker(\varphi)$.
To show this, perform type II swaps between $W_0$ and $W_i$, $i\in
[1,s]$. If $\pi_2(F)=0$, then Lemmas \ref{lem-basic-exchange}.1 and
either \ref{lem-Pertebation-I}.2 or \ref{lem-Pertebation-II}.3 imply
that (\ref{all-but-one-pi2}) holds with $\pi_2(x_0)=q_2$ as well. If
$\pi_2(F)\neq 0$ and (\ref{all-but-one-pi2}) fails, then Lemmas
\ref{lem-basic-exchange}.3 and either \ref{lem-Pertebation-I}.2 or
\ref{lem-Pertebation-II}.3 imply that $\pi_2(\psi(z))=q_2$ (say) for
all $z|x_i^{-1}x_0^{-1}\Wo W_i$, for some  $x_i|W_i$,  $i\in [1,s]$;
moreover, $s\geq 2$ and w.l.o.g. $\pi_2(\psi(x_1))$ and
$\pi_2(\psi(x_2))$ are not equal to $q_2$. Pull $x_1|W_1$ up into a
new product decomposition $W'$. If $\sigma(W'_0)=\sigma(W_0)$, then
the arguments of the previous sentence imply either
$\pi_2(\psi(x_1))=q_2$ or $\pi_2(\psi(x_2))=q_2$, a contradiction.
If $\sigma(W'_0)\neq\sigma(W_0)$ and $W\in \Omega_0^u$, then Lemma
\ref{lem-Pertebation-I}.2 implies that $W'\in \Omega_0^u$ with
$W'_0\in \C_1$, whence CLAIM C follows in view of CASE 2 applied to $W'$. Therefore we may assume
$\sigma(W'_0)\neq\sigma(W_0)$, $W\in \Omega_0^{nu}$ and $W'_0\in
\C_1$ (in view of Lemma \ref{lem-Pertebation-II}.3). Let $y$ be a
term that divides both $\Wo$ and ${W_0'}^{(2)}$ (possible since
$\sigma(\iota(W_0))\equiv 1\mod n$). Choose $I$ such that $\min
I\equiv \iota(y)\mod n$, and consequently $\epsilon(y,z)=0$ for any
$z$ (in view of (\ref{align-eps-choice})). Note that while the new choice of $I$ may change the overall
value of $\psi(x)$, where $x|S_2$, in a nontrivial manner, nonetheless,
the value of $\pi_2(\psi(x))$ remains unchanged. Perform type II
swaps between $y|W_0$ and any $z|W_2$. In view of our choice of $I$,
Lemma \ref{lem-Pertebation-II}.3 and $\pi_2(\psi(x_2))\neq q_2$, we
conclude  that
$-\psi(x_2)+\psi(y)=F=-f_1+f_2$ (since $-\pi_2(\psi(x_2))+\pi_2(\psi(y))\neq 0$, implying $-\psi(x_2)+\psi(y)\neq 0$), and that $-\psi(z)+\psi(y)=0$ if $z\neq x_2$ (since $-\pi_2(\psi(z))+\pi_2(\psi(y))=0$); in particular, $\psi_1(x_2)\neq
\psi_1(z)$ for $z|x_2^{-1}W_2$. However, performing type II swaps
between $y|W'_0$ and any $z|W'_2=W_2$, we conclude from Lemma
\ref{lem-Pertebation-II}.1 and the choice of $I$ that $\psi_1$ is constant on $W'_2=W_2$,
contradicting the previous sentence. Thus (\ref{all-but-one-pi2}) is
established in all cases.

Next we proceed to show that  $s=m-1$. To this end, suppose $s<m-1$.
As noted before, we may then assume $\Omega_0^u=\emptyset$.  Let
$U\in \A^*_1\cap \C_1$ (which is nonempty by the assumption
$s<m-1$). Then $f_1=\sigma(U)=ne_1$. Let $x_0$ and $q_2$ be as
defined by (\ref{all-but-one-pi2}). Thus, performing type II swaps
between a fixed $x_1|x_0^{-1}\Wo$ and any $y|V\in \A^*_2\cap
(\C_2\cup \C_0)$, we conclude from $f_1=\sigma(U)=ne_1$ and Lemmas
\ref{lem-Pertebation-II}.2 and \ref{lem-Pertebation-II}.5 that
$\psi_2(V)=\psi_2(x_1)^n$ for all such blocks $V\in \A^*_1\cap
(\C_2\cup \C_0)$. Hence, in view of $ne_1=f_1$, we conclude that
$\pi_2(\psi(V))=\pi_2(\psi(x_1))^n=q_2^n$ for all such $V$, which
combined with (\ref{all-but-one-pi2}) implies CLAIM C. So we may
assume $s=m-1$.

In case $W\in \Omega_0^{nu}$, we have assumed
$\Omega_0^u=\emptyset$. However, we will temporarily drop this
assumption, allowing consideration of $W\in \Omega_0^{nu}$ even when
$\Omega_0^u\neq\emptyset$, provided it still satisfies the hypothesis
of CASE 3 and follows the notation given in the first paragraph
with $s=m-1$. This will extend until the end of assertion \textbf{A1} below, which shows that the exceptional term $x_0$ in (\ref{all-but-one-pi2}) is not necessary.

\smallskip

\noindent
\begin{enumerate}
\item[{\bf A1.}\,] For every $W\in \Omega_0$ satisfying the
hypotheses of CASE 3 (allowing $W\in \Omega_0^{nu}$ even if
$\Omega_0^u\neq \emptyset$), we have $\pi_2(\psi(x_0))=q_2$, where $q_2$ and $x_0$ are as given by (\ref{all-but-one-pi2}).
\end{enumerate}

\smallskip

{\it Proof of \,{\bf A1}}.\,
\  Assume instead there exists $W\in \Omega_0$ satisfying the
hypotheses of CASE 3 with $\pi_2(\psi(x_0))\neq q_2$.

Suppose $x_0|W_j$ with $j>0$. Pull up an arbitrary $y|W_k\in \A_2$, with $k\geq
m$, into a resulting product decomposition $W''$ (such a block exists, else (\ref{all-but-one-pi2}) completes CLAIM C). If $W''$ satisfies the hypotheses of CASE 3, then applying (\ref{all-but-one-pi2}) to $W''$ we conclude that $\pi_2(\psi(y))=q_2$, whence CLAIM C follows in view of (\ref{all-but-one-pi2}) and the arbitrariness of $y$. Therefore we may instead assume $W''$ does not satisfy the hypotheses of CASE 3, whence, in view of CASES 1 and 2, we may assume $W''\in \Omega_0^{nu}$ with $W''_0\in \C_0(W'')$.

Let $z$ be a term dividing both $\Wo$ and ${W''_0}^{(2)}$ (which exists in view of $\sigma(\iota(\Wo))\equiv 1\mod n$). Note that we cannot have $0=\psi(z)-\psi(x_0)+\epsilon(z,x_0)ne_1$, as then
$0=\pi_2(\psi(x_0))-\pi_2(\psi(z))=\pi_2(\psi(x_0))-q_2$, a contradiction to $\pi_2(\psi(x_0))\neq
q_2$. Thus,
in view of (\ref{all-but-one-pi2}) and Lemma \ref{lem-Pertebation-II}.3
or \ref{lem-Pertebation-I}.2, it follows that performing a type II
swap between $x_0|W_j$ and $z|\Wo$ results in a new product
decomposition $W'$ in which $\sigma(W'_j)=Cf_1+f_2$ and
$\sigma(W'_0)=f_1$. Thus, if $W\in \Omega_0^{u}$, then we can apply Lemma \ref{Step-cant-flip-badly} to conclude $W''\in \Omega_0^{u}$, contrary to the conclusion of the previous paragraph. Therefore we may assume $W\in \Omega_0^{nu}$. Hence, from $W''\in \Omega_0^{nu}$ and Lemma \ref{lem-Pertebation-III}, it follows that $\sig(W'')=\sig(W)$, whence $\sigma(W''_0)=f_1+f_2$ (in view of $W''_0\in \C_0(W'')$). However, since $z|{W''_0}^{(2)}$, we may still
apply the previously described swap between $x_0|W''_j=W_j$ and $z|W''_0$ now in $W''$, which results in a product decomposition $W'''\in \Omega'$ with $\mathsf
v_{f_2}(\sig(W'''))=m$ (as $\sigma(W'''_j)=\sigma(W_j')=Cf_1+f_2=f_2$ and $\sigma(W''_j)=\sigma(W_j)=f_1$), contradicting that $S\in \A(G)$.
So we may assume $x_0|W_0$.

Perform a type II swap between an arbitrary $x|\Wo$ and $y|W_j$ with $j\in [1,m-1]$. In view of Lemma \ref{lem-Pertebation-I}.2 or \ref{lem-Pertebation-II}.3, it follows that \be\label{cryingbabe}\epsilon(x,y)ne_1+\psi(x)-\psi(y)\in \{0,\,F\}.\ee If $x=x_0$, then it follows, in view of $\pi_2(\psi(x_0))-\pi_2(\psi(y))=\pi_2(\psi(x_0))-q_2\neq 0$ and (\ref{cryingbabe}), that $\epsilon(x_0,y)ne_1+\psi(x_0)-\psi(y)=F$, and thus \be\label{morecryingbabe} 0\neq \pi_2(\psi(x_0))-q_2=\pi_2(\psi(x_0))-\pi_2(\psi(y))=\pi_2(F).\ee Consequently, if $x\neq x_0$, then, from $\pi_2(\psi(x))-\pi_2(\psi(y))=q_2-q_2=0$ (in view of (\ref{all-but-one-pi2})) and (\ref{cryingbabe}) and (\ref{morecryingbabe}), it follows that $$\epsilon(x,y)ne_1+\psi(x)-\psi(y)=0.$$ As $y|W_j$ with $j\in [1,m-1]$ and $x|x_0^{-1}\Wo$ were arbitrary above, we see that we can apply Lemma \ref{lem-aligment} with $i=0$, $Z=x_0^{-1}\Wo$ and $\Dc=\{W_1,\ldots,W_{m-1}\}$.

Thus we can choose $I$ appropriately so that, for some $q\in
\Ker(\varphi)$, we have that \be\label{tigertiger}\psi(x)=q\ee for
all $x|x_0^{-1}\Wo\prod_{\nu=1}^{m-1}W_{\nu}$, and that \be\label{poof}\iota(x)\leq
\iota(y)\ee for all $x|x_0^{-1}\Wo$ and $y|W_i$, $i\in [1,m-1]$. By
performing a type II swap between $x_0|W_0$ and each $y|W_i$, with
$i\in [1,m-1]$, we conclude, from $\pi_2(\psi(x_0))\neq q_2=\pi_2(q)$ and
either Lemma \ref{lem-Pertebation-I}.2 or
\ref{lem-Pertebation-II}.3, that
\be\label{qutip1}\psi(x_0)-q+\epsilon(x_0,y)ne_1=(C-1)f_1+f_2.\ee
Thus $\epsilon(x_0,y)$ must be the same for every $y|W_j$ with $j\in [1,m-1]$. As a result,
it follows in view of (\ref{align-eps-choice}) that either
$\iota(x_0)\leq \min(\supp(\iota( \prod_{\nu=1}^{m-1}W_{\nu})))$ or
$\iota(x_0)>\max(\supp(\iota(\prod_{\nu=1}^{m-1}W_{\nu})))$. In the latter
case, we may choose $I$ such $\min I\equiv \iota(x_0)\mod n$, and
thus, in both cases,  we have (in view of (\ref{poof})) \be\label{ordering}\iota(x)\leq
\iota(y)\ee for all $x|\Wo$ and $y|W_i$, $i\in [1,m-1]$, while still
preserving that (\ref{tigertiger}) holds for some $q\in
\Ker(\varphi)$ (since (\ref{ordering}) was all that was required in the proof of Lemma \ref{lem-aligment} to ensure (\ref{tigertiger}) held). Consequently, (\ref{qutip1}) and (\ref{align-eps-choice}) imply that
\be\label{psivalu-xo}\psi(x_0)=q+F=q+(C-1)f_1+f_2.\ee

Let $y|W_k\in \A_2$ with $k\geq m$ and $\pi_2(\psi(y))\neq q_2$;
such a term and block exists else  CLAIM C follows in view of
(\ref{all-but-one-pi2}). If $y|W_k$ could be pulled up into a new
product decomposition $W'$ with $x_0|W'_0$, then $W'$ must still satisfy the
hypothesis of CASE 3 (by the same arguments used when $x_0|W_j$ with $j>0$), whence applying (\ref{all-but-one-pi2}) to
$W'$ implies $\pi_2(\psi(x_0))=q_2$ or $\pi_2(\psi(y))=q_2$, contrary to our assumption.
Therefore we may assume this is not the case, whence Theorem
\ref{EGZ-thm}.2 implies that \be\label{thelist}\iota(\Wo)=g_1^lg_2^{n-1-l}\iota(x_0)\mbox{
and }\iota(W_k)=g_1^{n-1-l}g_2^l\iota(y),\ee for some $g_1,\,g_2\in
\Z$ with $\gcd(g_1-g_2,n)=1$. If there existed $x'_0|\Wo$ such that
$\epsilon(x'_0,z)=\epsilon(x_0,z)$ for some $z|W_k$, then we could
apply a type II swap between $z|W_k$ and each of $x_0|W_0$ and
$x'_0|W_0$, which in view of Lemma \ref{lem-Pertebation-I}.3 or
Lemma \ref{lem-Pertebation-II} would imply that
$\psi_2(x_0)=\psi_2(x'_0)=\psi_2(q)$, contradicting (\ref{psivalu-xo}).
Therefore we may assume otherwise, whence (\ref{align-eps-choice}) implies either \be\iota(x_0)\leq
\min(\supp(\iota(W_k))\leq
\max(\supp(\iota(W_k))<\min(\supp(\iota(x_0^{-1}\Wo))\label{zup1}\ee
or \be\iota(x_0)> \max(\supp(\iota(W_k))\geq
\min(\supp(\iota(W_k))\geq
\max(\supp(\iota(x_0^{-1}\Wo))\label{zup2}.\ee
In either case, we see that $|\supp(\iota(W_k))\cap \supp(\iota(\Wo))|\leq 1$. As a result, (\ref{thelist}) implies that w.l.o.g. $l=n-1$, $\iota(\Wo)=g_1^{n-1}\iota(x_0)$ and
$\iota(W_k)=g_2^{n-1}\iota(y)$. Thus $\sigma(\iota(W_k))\equiv
0\mod n$ and $\sigma(\iota(\Wo))\equiv 1\mod n$ imply that
$\iota(W_k)=g_2^{n}$ and $\iota(x_0)\equiv g_1+1\mod n$.

If (\ref{zup1}) holds, then from $\iota(x_0)\equiv g_1+1\mod n$ and (\ref{zup1}) it follows that $\max I\equiv g_1\mod n$. However, in view
of (\ref{ordering}), this is only possible if $\iota(x)\equiv
g_1\mod n$ for all $x|x_0^{-1}\Wo\prod_{\nu=1}^{m-1}W_{\nu}$, in which
case, since $\psi(x)=q$ also holds for all such terms (in view of (\ref{tigertiger})), it follows
that $S$ contains a term with multiplicity $mn-1$, as desired. Therefore we can instead assume
(\ref{zup2}) holds. In this case, it follows, in view of (\ref{zup2}), $\iota(x_0^{-1}\Wo)=g_1^{n-1}$ and $\iota(x_0)\equiv g_1+1\mod n$, that $$\{g_2\}=\supp(\iota(W_k))=\supp(\iota(x_0^{-1}\Wo))=\{g_1\},$$ contradicting that  $\gcd(g_1-g_2,n)=1$.
\qedsymbol

\smallskip

We now return to arguments where we assume $\Omega_0^u=\emptyset$
when $W\in \Omega_0^{nu}$. In view of \textbf{A1}, we may assume
$\pi_2(\psi(x))=q_2$ for all $x|\Wo\prod_{\nu=1}^{m-1}W_{\nu}$. Let
$y|W_k$, with $W_k\in \A_2$ and $k\geq m$, be arbitrary. If we can
pull up $y$ into a new product decomposition $W'$ such that either
$W'\in \Omega_0^u$, or else $W'\in \Omega_0^{nu}$ and $W'_0\notin \C_0(W')$, then it follows, in view of CASES 1 and 2, \textbf{A1} and (\ref{all-but-one-pi2}), that we may assume
$\pi_2(\psi(y))=q_2$ also (note this is where we need that $W\in \Omega_0^{nu}$ is allowed in \textbf{A1} even when $\Omega_0^u\neq \emptyset$). However, this can only fail if (by an appropriate choice for $f_2$ in the case when $W\in \Omega_0^u$) w.l.o.g.
\be\label{thebadasssequence}\sig(W)=f_1^{m-1}f_2^{m-2}(Cf_1+f_2)((1-C)f_1+f_2),\ee with
$\sigma(W_k)=(1-C)f_1+f_2$ and (recall) $\sigma(W_0)=Cf_1+f_2$. Consequently, we see that there is at
most one block $W_k$ for which this can fail (as $W_0\notin \C_0$
when $\Omega_0^{u}=\emptyset$). As CLAIM C follows otherwise, we may
assume $W_k\in \A_2$ exists and that $\sig(W)$ is of such form, and w.l.o.g.
assume $k=2m-2$. Then
\ber\label{uber1}Cf_1+f_2=\sigma(W_0)=Y_1ne_1+ne_2+nq_2,\\
\label{uber2} f_1=\sigma(W_1)=Y_2ne_1+ne_2+nq_2,\eer for some $Y_i\in \Z$. From (\ref{uber1}) and
(\ref{uber2}), we conclude that \be\label{uber-off}(C-1)f_1+f_2\in
\langle ne_1 \rangle .\ee

If there exists $U\in \A_1^*$, then $ne_1=\sigma(U)=f_2$ (in view of (\ref{thebadasssequence}), $s=m-1$ and $W_k=W_{2m-2}\in \A_2$); thus from  (\ref{uber-off}) it follows that $(C-1)f_1\in \langle f_2 \rangle$, which is
only possible if $C\equiv 1\mod m$, contradicting that $W\notin
\C_0$ when $W\in\Omega_0^{nu}$ (in view of (\ref{thebadasssequence})). So we may instead assume $|\A_1|=1$. This same argument also shows that $\psi_1(ne_1)\neq 0$. Let $\Dc=\{W_m,\ldots,W_{2m-2}\}$.

If $\psi_2(ne_1)=0$, then $ne_1\in \langle f_1\rangle$, which combined with (\ref{uber-off}) yields a contradiction to $(f_1,f_2)$ being a basis. Therefore $\psi_2(ne_1)\neq 0$. Thus, in view of Lemma \ref{lem-Pertebation-I}.3 or Lemmas \ref{lem-Pertebation-II}.5 and \ref{lem-Pertebation-II}.2, it follows that we may apply Lemma \ref{lem-aligment} with $Z=\Wo$, $i=2$ and $\Dc$ as given above. Choose $I$ as directed by Lemma \ref{lem-aligment} (as mentioned before, changing $I$ does not affect the value of $\pi_2(\psi(x))$, and thus (\ref{all-but-one-pi2}) remains unaffected). Then \be\label{psi2-constant}\psi_2(x)=\alpha_2,\ee for all $x|\Wo\prod_{\nu=m}^{2m-2}W_{\nu}$ and some $\alpha_2\in \langle f_2\rangle$, and \be\label{lined-up}\iota(x)\leq \iota(y),\ee for all $x|\Wo$ and $y|\prod_{\nu=m}^{2m-2}W_{\nu}$.

Let $y_0|W_{2m-2}$ with $\pi_2(\psi(y_0))\neq q_2$ (such $y_0$
exists, as discussed above, else CLAIM C follows). Let $W'$ be an
arbitrary product decomposition resulting from pulling up $y_0$ into
a new product decomposition. Since $\pi_2(\psi(y_0))\neq q_2$, we
have (as discussed earlier) $\sig(W')=f_1^{m-1}f_2^{m-1}(f_1+f_2)$
with $\sigma(W'_0)=f_1+f_2$. Let $X=\gcd(\Wo,{W'_0}^{(2)})$ and let
$X'$, $Y'$ and $Y$ be defined by $\Wo=XX'$, ${W'_0}^{(2)}=XY'$ and
$W_{2m-2}=YY'$. Thus $W'_{2m-2}=X'Y$. Note that all four of these
newly defined subsequences are nontrivial in view of
$\sigma(\iota(\Wo))\equiv 1\mod n$ and
$\sigma(\iota(W_{2m-2}))\equiv 0\mod n$.

Let $\Dc'=\{W'_0,W'_1,\ldots,W'_{m-1}\}$. In view of Lemma
\ref{lem-Pertebation-II}.4  and $\psi_1(ne_1)\neq 0$, it follows
that we can apply Lemma \ref{lem-aligment} with $i=1$,
$Z={W'_0}^{(2)}$, and $\Dc$ taken to be $\Dc'$ (however, do NOT
change $I$). If (\ref{vogue2}) holds, then (in view of (\ref{align-eps-choice})) we can find $z|W'_j$, for
some $j\in [1,m-1]$, such that $\epsilon(y_0,z)=\epsilon(x,z)$,
where $x|X$. Applying a type II swap between $z|W'_j$ and each of
$x|W'_0$ and $y_0|W'_0$, we conclude from Lemma
\ref{lem-Pertebation-II}.4 that $\psi_1(x)=\psi_1(y_0)$. However,
since $x|X$ and $X|\Wo$, it follows from (\ref{psi2-constant}) that
$\psi_2(x)=\psi_2(y_0)$ also, whence $\psi(x)=\psi(y_0)$, implying
$q_2=\pi_2(\psi(x))=\pi_2(\psi(y_0))$, contrary to assumption.
Therefore we may instead assume (\ref{vougue1}) holds. Moreover, if
both $y_0$ and some $x|X$ are contained in the same interval $J_i$
(from (\ref{vougue1})), then we can repeat the above argument to
obtain the same contradiction. Therefore it follows, in view of
(\ref{lined-up}), that $y_0\in J_2$ and $X\subset J_1$.

Let $z|{W'_0}^{(2)}$ and $z'|W'_j$ with $j\geq m$ be arbitrary. Performing a type II swap between $z|{W'_0}^{(2)}$ and $z'|W'_j$, we conclude from Lemma \ref{lem-Pertebation-II}.5 that $$\psi_2(z)-\psi_2(z')+\psi_2(\epsilon(z,z')ne_1)=0.$$ Thus (\ref{psi2-constant}) implies that $\psi_2(\epsilon(z,z')ne_1)=0$, which, in view of $\psi_2(ne_1)\neq 0$ and (\ref{align-eps-choice}), implies that $\epsilon(z,z')=0$ and \be\label{second-aligning}\iota(z)\leq \iota(z'),\ee for any $z|{W'_0}^{(2)}$ and $z'|W'_j$ with $j\geq m$.

Applying (\ref{second-aligning}) using $z|Y'$ and $z'|X'$ and $j=2m-2$, we conclude in view of (\ref{lined-up}) that \be\label{loudly}\iota(z)=\max (\supp(\iota({W'_0}^{(2)})))=\min(\supp(\iota(\prod_{\nu=m}^{2m-2}W'_{\nu})))=\iota(z'),\ee for any $z'|X'$ and $z|Y'$.

From (\ref{loudly}) applied with $z=y_0$, we see that there is
$y'_0|\Wo$ with $\iota(y'_0)=\iota(y_0)$. Thus $y$ can be pulled up
into a new decomposition $W''$ by exchanging $y_0|W_{2m-2}$ and
$y'_0|W_0$, and all of the above arguments (valid for an arbitrary
$W'$ obtained by pulling up $y_0|W_{2m-2}$) are applicable for
$W''$. In particular, ${y'_0}^{-1}\Wo=X\subset J_1$ and $y_0\in J_2$ imply, in view
of $Y=y_0^{-1}W_{2m-2}$, (\ref{vougue1}) and (\ref{loudly}),
that
\be\label{slowly}\max(\supp(\iota({y'_0}^{-1}\Wo)))<\min(\supp(\iota(W_{2m-2}))).\ee

If we could pull up $y'_0y_0|W_0W_{2m-2}$ into a new product
decomposition $W'''$, then (\ref{slowly}) would imply that $X'$
contains a $z'$ with $\iota(z')<\iota(y_0)$, which would contradict
(\ref{second-aligning}) applied with $z=y_0$ and $z'=z'$. Therefore
we can assume otherwise, whence Theorem \ref{EGZ-thm}.2 and
(\ref{slowly}) imply that
$|\supp(\iota({y'_0}^{-1}\Wo))|=|\supp(\iota(y_0^{-1}W_{2m-2}))|=1$.
Thus $\sigma(\iota(\Wo))\equiv 1\mod n$ and
$\sigma(\iota(W_{2m-2}))\equiv 0\mod n$ force that
$\iota(W_{2m-2})=g^n$ and $\iota(\Wo)=(g-1)^{n-1}g$, where
$\iota(y_0)=\iota(y'_0)=g$. Consequently, (\ref{vougue1}), $X\subset
J_1$ and $y_0\in J_2$ (in the case when $W'=W''$) force that
$\iota(z)=g$ for all $z|{y'_0}^{-1}\Wo\prod_{\nu=1}^{m-1}W_i$.

Applying type III swaps among the $W_i$, $i\in [1,m-1]$, we conclude from Lemma \ref{lem-Pertebation-III}.1 or \ref{lem-Pertebation-I}.1 that $\psi(x)=q$ (say) for all $x|W_i$, $i\in [1,m-1]$. Applying type III swaps between $W_0$ and $W_1$, we conclude from Lemma \ref{lem-Pertebation-II}.3 or \ref{lem-Pertebation-I}.2 and Lemma \ref{lem-basic-exchange}.3 that $\psi(x)=q$ for all $x|{y''_0}^{-1}{y'_0}^{-1}\Wo$, for some $y''_0|{y'_0}^{-1}\Wo$, and that $\psi(y''_0)=q$ or $q+(C-1)f_1+f_2$. Applying a type III swap between $y''_0|W''_0$ and some $z|W''_1$ in $W''$, we conclude from Lemma \ref{lem-Pertebation-II}.4 that $\psi_1(y''_0)=\psi_1(z)=\psi_1(q)$, whence we see that $\psi(y''_0)=q+(C-1)f_1+f_2$ is impossible (since $C\equiv 1\mod m$ would contradict that $W_0\notin \C_0$ when $W\in \Omega_0^{nu}$; see (\ref{thebadasssequence})). Thus $\psi(y''_0)=q$ as well, and $ge_1+e_2+q$ has multiplicity at least $mn-1$ in $S$, as desired, completing CASE 3.

\smallskip

\noindent CASE 4: \ $\Omega_0^u=\emptyset$ \ and \ $W_0\in \C_0$.

We start with the following assertion.

\begin{enumerate}
\item[{\bf A2.}\,] If $\Omega_0^u=\emptyset$, $W\in \Omega_0^{nu}$ with
$\sig(W)=f_1^{m-1}f_2^{m-1}(f_1+f_2)$, $W_0\in \C_0$, and $|\A_2\cap
\C_i|\geq 1$ for all $i \in \{1,2\}$, then $I$ can be chosen such
that one of the following properties holds{\rm \,:}
\begin{enumerate}
\item[(i)] $|\supp(\psi(\Wo))|=1$, or

 \smallskip
\item[(ii)]
\begin{enumerate}
\item[(a)] $\psi_i(ne_1)\neq 0$ for all $i \in \{1,2\}$,

\item[(b)] there exist $g_1,\,g_2\in \Z$ such that
           $\gcd(g_1-g_2,n)=1$ and $\iota(U)=g_1^n$ and $\iota(V)=g_2^n$, for
           every $U\in \A^*_2\cap \C_1$ and $V\in \A^*_2\cap \C_2$,

\item[(c)] $g_1>g_2$ and $\iota(x)\leq g_1$ for all $x|\Wo$, and
\item[(d)] if also $|\A_2\cap \C_i|\geq 2$ for all $i \in \{1,2\}$, then there exist $c,\,d\in \Ker(\varphi)$ such that $\psi(U)=c^n$ and
                   $\psi(V)=d^n$ for every $U\in \A^*_2\cap \C_1$ and $V\in
                   \A_2^*\cap\C_2$.
\end{enumerate}
\end{enumerate}
\end{enumerate}

\smallskip

{\it Proof of \,{\bf A2}}.\, We may w.l.o.g. assume $\C_1$ are those blocks with sum $f_1$. Performing type II swaps between each
$x|\Wo$ and each $y|U\in \A^*_2\cap \C_1$, and between each $x|\Wo$
and each $z|V\in \A^*_2\cap \C_2$, we conclude from Lemma
\ref{lem-Pertebation-II} that
\ber\label{pseudo-start-1}\psi_1(x)&=&\psi_1(y)-\psi_1(\epsilon(x,y)ne_1),\\
\psi_2(x)&=&\psi_2(z)-\psi_2(\epsilon(x,z)ne_1)\label{pseudo-start-2},\eer
where (\ref{align-eps-choice}) holds.

Since $\ord(e_1)=mn$, one of $\psi_1(ne_1)$ or $\psi_2(ne_1)$ is nonzero, say the former
(the other case will be identical). Then, in view of
(\ref{pseudo-start-1}), we may apply Lemma \ref{lem-aligment} with $i=1$, $Z=\Wo$ and $\Dc=\A_2^*\cap\C_1$. Consequently, we can choose $I$ such that \be\label{poobear}\iota(x)\leq \iota(y),\ee
for all $x|\Wo$ and $y|U\in \A^*_2\cap \C_1$, and $\psi_1$ is constant on $\Wo$.
If $\psi_2(ne_1)$ is zero, then (\ref{pseudo-start-2}) implies that
$\psi_2$ is also constant on $\Wo$, whence (i) holds. Therefore we
may assume otherwise, and (a) is established. Likewise, if there is
some $z|V\in \A^*_2\cap \C_2$ with $\iota(z)\geq \max
(\supp(\iota(\Wo))$ or $\iota(z)<\min(\supp(\iota(\Wo)))$, then (i) again holds (in view of (\ref{align-eps-choice}) and (\ref{pseudo-start-2})). So we may assume
otherwise:
\be\label{uxed} \min(\supp(\iota(\Wo)))\leq \iota(z)<\max(\supp(\iota(\Wo))),\ee for all $z|V\in \A_2^*\cap \C_2$.
Consequently, it follows in view of (\ref{poobear}) that
both $\supp(\iota(\prod_{U\in \A^*_2\cap \C_1}U))$ and
$\supp(\iota(\prod_{V\in \A^*_2\cap \C_2}V))$ are disjoint.

Suppose $|\supp(\iota(U))|>1$ or $|\supp(\iota(V))|>1$, for some $U\in
\A^*_2\cap \C_1$ or $V\in \A^*_2\cap \C_2$. Then we may find $u_0|U$ and $v_0|V$ such that $|\supp(\iota(u_0^{-1}U))|>1$ or $|\supp(\iota(v_0^{-1}V))|>1$, whence it follows, in view of Theorem \ref{EGZ-thm}.2 (applied to $\iota(u_0^{-1}v_0^{-1}UV)$ modulo $n$) and the fact that $\supp(\iota(\prod_{U\in \A^*_2\cap \C_1}U))$ and
$\supp(\iota(\prod_{V\in \A^*_2\cap \C_2}V))$ are
disjoint, that we can refactor $UV=U'V'$ such that $U'$ and $V'$ both contain
terms from both $U$ and $V$. Replacing the blocks $U$ and
$V$ by the blocks $U'$ and $V'$ yields a new product decomposition
$W'\in \Omega_0$; in view of Lemma \ref{lem-Pertebation-II}.3, we
still have $\sig(W')=\sig(W)$, whence $W'$ satisfies the hypotheses
of \textbf{A2}. However, since both $U'$ and $V'$ contain terms from
both $U$ and $V$, it follows that both $U'$ and $V'$ contain a term $z'|U$ with $\iota(z')\geq \max(\supp(\iota(\Wo)))$ (in view of (\ref{poobear})), as well as a term $z|V$ with $\min(\supp(\iota(\Wo)))\leq \iota(z')< \max(\supp(\iota(\Wo)))$ (in view of (\ref{uxed})), which makes it impossible for (\ref{vougue1}) or (\ref{vogue2}) to hold for $W'$, contradicting that the above arguments show Lemma \ref{lem-aligment} must hold for $W'$.  So we may assume $|\supp(\iota(U))|=1$ and
$|\supp(\iota(V))|=1$ for all $U\in \A^*_2\cap \C_1$ and $V\in
\A^*_2\cap \C_2$. Moreover, this argument also shows that if $\iota(U)=g_1^n$
and $\iota(V)=g_2^n$, then $\gcd(g_1-g_2,n)=1$.

Suppose $|\supp(\iota(\prod_{U\in \A^*_2\cap
\C_1}U))|>1$ or $|\supp(\iota(\prod_{V\in \A^*_2\cap \C_2}V))|>1$, say the former (the other case will be identical). Then there are $U_1,\,U_2\in \A_2^*\cap \C_1$ and $V\in \A_2^*\cap \C_2$ with $\iota(U_1)=g_1$, $\iota(U_2)=g'_1$ and $\iota(V)=g_2$, where $g_1\neq g'_1$.
We have $\gcd(g_1-g'_1,n)=1$, else repeating the arguments of the previous paragraph, using $U_1$ and $U_2$ in place of $U$ and $V$, we obtain a $W'\in \Omega_0$ satisfying the hypotheses of \textbf{A2} but such that the conclusion of the previous paragraph fails, whence $1=|\supp(\psi({W'_0}^{(2)}))|=|\supp(\psi({W_0}^{(2)}))|$ must hold by prior arguments, yielding (i). Hence, since $\gcd(g_1-g_2,n)=1$ and $\gcd(g'_1-g_2,n)=1$, it follows that all $n$-term zero-sum modulo $n$ subsequences of $g_1^{n-1}{g'_1}^{n-1}g_2^{n-1}$ have support of cardinality three.
Thus, by two applications of Theorem \ref{EGZ-thm}.1, we see that we can refactor $U_1U_2V=XYZ$ such that $X$, $Y$ and $Z$ all contain terms from each of $U_1$, $U_2$ and $V$ (note, since $|\supp(\iota(X))|=3$, that $\iota(YZ)\subset g_1^{n-1}{g'_1}^{n-1}g_2^{n-1}$). Replacing $U_1$, $U_2$ and $V$ by $X$, $Y$ and $Z$ yields a new product decomposition
$W'\in \Omega_0$; in view of $\Omega_0^u=\emptyset$ and $m\geq 5$, we
still have $\sig(W')=\sig(W)$, whence $W'$ satisfies the hypotheses
of \textbf{A2}. However, since  $X$, $Y$ and $Z$ each contain terms from  $U_1$, $U_2$ and $V$,
we see that the condition
$|\supp(\iota(U))|=1$ for $U\in \A_2^*\cap \C_1$ fails for $W'$, whence previous arguments
show $|\supp(\psi(\Wo))|=|\supp(\psi({W_0'}^{(2)}))|=1$, yielding
(i). So we may assume $|\supp(\iota(\prod_{U\in \A^*_2\cap
\C_1}U))|=1$ and $|\supp(\iota(\prod_{V\in \A^*_2\cap \C_2}V))|=1$, and w.l.o.g. assume $\supp(\iota(\prod_{U\in \A^*_2\cap
\C_1}U))=g_1$ and $\supp(\iota(\prod_{V\in \A^*_2\cap \C_2}V))=g_2$.
This establishes (b). Moreover, by the arguments from the second paragraph, we see that we can choose $I$ such that (c) holds.

We now assume $|\A_2\cap \C_i|\geq 2$, for all $i \in \{1,2\}$.
Performing type III swaps between distinct $U_1,\,U_2\in \A^*_2\cap
\C_1$ and between distinct $V_1,\,V_2\in \A^*_2\cap \C_2$, we
conclude from Lemma \ref{lem-Pertebation-III} that $\psi(U)=c$ (say)
for all $U\in \A^*_2\cap \C_1$ and that $\psi(U)=d$ (say) for all
$V\in \A^*_2\cap \C_2$, establishing (d), and completing the proof
of \textbf{A2}.\qedsymbol

\smallskip

Since $\Omega_0^u=\emptyset$, it follows, in view of Lemma
\ref{lem-Pertebation-III}, that if we pull up any term $y|U$, where
$U\in \A^*_2$, then we may assume the resulting product
decomposition still satisfies the hypothesis of CASE 4, else CASE
3 completes the proof. Thus, if for every product decomposition
satisfying the hypothesis of CASE 4 we can find $I$ such that
$|\supp(\psi(\Wo))|=1$, then, since modifying $I$ does not alter the
values $\pi_2(\psi(x))$, we would be able to conclude
$|\supp(\pi_2(\psi(S_2)))|=1$---by successively pulling up terms
$y|S_2$, yielding a sequence of product decompositions satisfying
the hypotheses of CASE 4, until every such $y$ occurred in the
$\Wo$ part of one of these product decompositions, and then noting
that there must always be a common term in $\Wo$ between any two
consecutive product decompositions in the sequence (in view of $\sigma(\iota(\Wo))\equiv 1\mod n$)---completing CLAIM C. Therefore we may assume this is not the case for $W$. Let w.l.o.g. $\sig(W)=f_1^{m-1}f_2^{m-1}(f_1+f_2)$ and $\C_1$ consist of those blocks with sum $f_1$.

Note that we must have $\A_2^*\cap \C_1$ and $\A_2^*\cap \C_2$ both
nonempty, else in view of CLAIM B it would follow that $e_1$ is a
term of $S$ with multiplicity $mn-1$, completing the proof. Thus
{\bf A2.(ii)(a)} implies that $\psi_i(ne_1)\neq 0$ for $i \in \{
1,2\}$. As a result, we cannot have a block $U\in \A_1^*$ (else $ne_1=\sigma(U)=f_1$ or $f_2$). Hence $|\A_1|=1$, implying $|\A_2^*\cap \C_1|\geq 2$
and $|\A_2^*\cap \C_2|\geq 2$. Thus, by choosing $I$ appropriately, {\bf A2.(ii)(a--d)} holds for
$W$.

Suppose $\supp(\iota(\Wo))\neq \{g_1,\,g_2\}$. Then there must be
some $x_0|\Wo$ with $\iota(x_0)\notin\{g_1,\,g_2\}$ (in view of
$\sigma(\iota(\Wo))\equiv 1\mod n$). Since $\gcd(g_1-g_2,n)=1$,
there is no $n$-term zero-sum mod $n$ subsequence of
$g_1^{n-1}g_2^{n-1}$. Thus applying Theorem \ref{EGZ-thm}.1 to
$g_1^{n-1}g_2^{n-1}\iota(x_0)$ implies that we may find a
subsequence $U_1|\Wo UV$, where $U\in \A_2^*\cap \C_1$ and $V\in
\A_2^*\cap \C_2$, such that $x_0|U_1$ and
$\supp(\iota(z^{-1}U_1))=\{g_1,\,g_2\}$. Consequently,
$\vp_{g_i}(U_1)\leq n-2$, and thus $\vp_{g_i}(\iota(U_1^{-1}\Wo
UV))\geq 2$, for $i = \{1,2\}$. Thus, if there were no $n$-term
zero-sum mod $n$ subsequence of $\iota(U_1^{-1}u_1^{-1}v_1^{-1}\Wo
UV)$, where $u_1|U_1^{-1}U$ and $v_1|U_1^{-1}V$, then Theorem
\ref{EGZ-thm}.2 would imply that $\iota(U_1^{-1}\Wo UV)=g_1^ng_2^n$,
whence $$1\equiv \sigma(\iota(\Wo UV))\equiv \sigma(\iota(U_1))+ng_1+ng_2\equiv 0\mod n,$$ which is a
contradiction. Therefore we may assume there exists such a
subsequence $\iota(U_2)$, where $U_2|U_1^{-1}u_1^{-1}v_1^{-1}\Wo UV$.
Let $W'_0$ be defined by $W_0UV=U_1U_2W'_0$. Then replacing the
blocks $W_0$, $U$ and $V$ with the blocks $W'_0$, $U_1$, and $U_2$
yields a new product decomposition $W'\in \Omega_0$. Since
$\Omega_0^u=\emptyset$ and $m\geq 4$, we must have $\sig(W)=\sig(W')$, and we may
further assume $W'_0\in \C_0$ else CASE 3 completes the proof. Thus
$W'$ satisfies the hypotheses of CASE 4, but since
$|\supp(\iota(U_1))|>1$, we see that $W'$ does not satisfy {\bf
A2.(ii)}. Thus \textbf{A2.(i)} implies that we must have $|\supp(\pi_2(\psi({W'_0}^{(2)}))|=1$ (note we do not have $|\supp(\psi({W'_0}^{(2)})|=1$ as we would need to change $I$ for this to hold); since
$u_1v_1|W'_0$, this implies that
$\pi_2(c)=\pi_2(\psi(u_1))=\pi_2(\psi(v_1))=\pi_2(d)$.

Let $x|x_0^{-1}\Wo$ be arbitrary. By Theorem \ref{EGZ-thm}.1, it follows that
there is an $n$-term zero-sum mod $n$ subsequence of
$\iota(x^{-1}U_1^{-1}\Wo UV)$, say $\iota(U_3)$ with
$U_3|x^{-1}U_1^{-1}\Wo UV$. Let $W''_0$ be defined by
$W_0UV=U_1U_3W''_0$. Then replacing the blocks $W_0$, $U$ and $V$
with the blocks $W''_0$, $U_1$, and $U_3$ yields a new product
decomposition $W''\in \Omega_0$, and as before we may assume $W''$
satisfies the hypotheses of CASE 4 with $\sig(W'')=\sig(W)$. Thus, since
$|\supp(\iota(U_1))|>1$, we see that $W''$ does not satisfy {\bf
A2.(ii)}, and so we must have \be\label{laughingcow}|\supp(\pi_2(\psi({W_0''}^{(2)}))|=1.\ee Since
$x_0|U_1$, it follows in view of the pigeonhole principle that we
must have a term $x'|{W_0''}^{(2)}$ with $x'|UV$, and thus with
$\pi_2(\psi(x'))=\pi_2(c)$ (in view of the previous paragraph).
Since $x|W''_0$, this implies $\pi_2(\psi(x))=\pi_2(c)$ (in view of (\ref{laughingcow})). As
$x|x_0^{-1}\Wo$ was arbitrary, we conclude that every
$x|x_0^{-1}S_2$ has $\pi_2(\psi(x))=\pi_2(c)=\pi_2(d)$, completing
the proof (in view of \textbf{A2.(ii)} holding for
$W$). So we may instead assume  $\supp(\iota(\Wo))= \{g_1,\,g_2\}$.

Since $|\A_1|=1$, let w.l.o.g. $W_1,\ldots,W_{m-1}$ be the blocks of
$\A^*_2\cap \C_1$, and let $W_m,\ldots,W_{2m-2}$ be the blocks of
$\A^*_2\cap \C_2$. Let $\Wo = b_1 \cdot \ldots \cdot b_t b'_1 \cdot
\ldots \cdot b'_{n-t}$ with $\iota(b_i)=g_1$ and $\iota(b'_j)=g_2$.
Applying type III swaps between $b_i|W_0$ and $y|W_1$, it follows from Lemma \ref{lem-Pertebation-III}.4
that we may assume
$\psi(b_i)=\psi(y)=c$ for all $i$ (else CASE 3 completes the proof).
Likewise applying type III swaps between $b'_i|W_0$ and $z|W_m$, it
follows that $\psi(b'_i)=\psi(z)=d$ for all $i$. Consequently, we
may assume $t\in [2, n-2]$, else $S$ contains a term with
multiplicity at least $mn-1$, as desired (either $g_1e_1+e_2+c$ or
$g_2e_1+e_2+d$).

Applying type II swaps between $b_1|W_0$ and $z|W_m$ and between
$b'_1|W_0$ and $y|W_1$, it follows, in view of Lemma
\ref{lem-Pertebation-II}, (\ref{align-eps-choice}) and $g_1>g_2$, that \ber\label{uz1} d-c\in
\langle f_2 \rangle, \\ c-d+ne_1\in \langle f_1 \rangle
\label{uz2}.\eer Since $t \in [2, n-2]$, we have $b_1b_2|\Wo$ and
$b'_1b'_2|\Wo$. Let $Y$ be a subsequence of $W_1$ and  $Z$ be a
subsequence of $W_m$ with $|Y| = |Z| = 2$.  Applying type II swaps
between $b'_1b'_2|W_0$ and $Y|W_1$ and between $b_1b_2|W_0$ and
$Z|W_m$, we conclude from Lemma \ref{lem-Pertebation-II} that \ber
2(d-c)+\epsilon(b'_1b'_2,Y)ne_1\in \langle f_2 \rangle,
\label{uzz1}\\\label{uzz2}2(c-d)+\epsilon(b_1b_2,Z)ne_1\in \langle
f_1 \rangle .\eer Observe (in view of $g_1>g_2$) that $$\epsilon(b'_1b'_2,Y)ne_1=\left\{
                             \begin{array}{ll}
                               0, & \hbox{if } g_1-g_2\leq \frac{n-1}{2}; \\
                               -ne_1, & \hbox{if } g_1-g_2\geq \frac{n+1}{2}.
                             \end{array}
                           \right.$$ Likewise
$$\epsilon(b_1b_2,Z)ne_1=\left\{
                             \begin{array}{ll}
                               ne_1, & \hbox{if } g_1-g_2\leq \frac{n-1}{2}; \\
                               2ne_1, & \hbox{if } g_1-g_2\geq \frac{n+1}{2}.
                             \end{array}
                           \right.$$
Thus, if $g_1-g_2\leq \frac{n-1}{2}$, then (\ref{uzz2}) and
(\ref{uz2}) imply that $c-d\in \langle f_1 \rangle$, which combined
with (\ref{uz1}) implies that $c=d$, in which case CLAIM C follows. On the other hand, if $g_1-g_2\geq \frac{n+1}{2}$, then
(\ref{uzz1}) and (\ref{uz1}) imply that $ne_1\in \langle f_2
\rangle$, which contradicts that {\bf A2.(ii)(a)} holds for $W$,
completing CASE 4.
\end{proof}

\medskip
\centerline{{\bf CLAIM D:} \ $\mathsf h(S)=mn-1$.}
\medskip

\begin{proof}
Let $S'_2=x_0^{-1}S_2$, with $x_0$ as in CLAIM C, and let $S'=S_1S'_2$. By
Proposition \ref{mainproposition} and CLAIM B, we have $S_1 =
{e_1}^{|S_1|}$, $|S_1|= \ell n-1$ and $|S'_2|=2mn - \ell n-1$, for
some $\ell \geq 1$. If $\ell \geq m$, then $e_1$ is a term with
multiplicity at least $mn-1$, as desired. Therefore $\ell < m$.
Moreover, since $S\in \A(G)$, it follows that $0\notin \Sigma(S')$.
In view of CLAIM C, we may assume every $x|S'_2$ is of the form
$y_ie_1+(1+nq)e_2$, with $q\in [0,m-1]$. Let  $T = \pi_1(S'_2) \in \Fc( \langle e_1 \rangle)$, and
let $H'=\langle e_1,\,(1+qn)e_2\rangle \cong C_{mn}\oplus C_{rn},$
where $rn=\ord((1+qn)e_2)$. If $r<m$, then noting that $S'\in
\Fc(H')$ with $|S'|=2mn-2\geq mn+rn-1 = \mathsf D(H')$, we see that
$0\in \Sigma(S')$, contradicting that $S\in \A(G)$. Thus we may
choose $e_2$ to be $(1+qn)e_2$ while still preserving that
$(e_1,e_2)$ is a basis, and so w.l.o.g. we assume $q=0$.

Since $\ell<m$, it follows that $|S'_2|=2mn - \ell n-1\geq
mn+n-1\geq mn+2$ and
\be\label{l-not-m-case}\Sigma(S_1)=\{e_1,\,2e_1,\ldots, (\ell
n-1)e_1\}.\ee Consequently, $0\notin \Sigma(S')$ implies
\be\label{subset-for-S2} \Sigma_{mn}(S'_2)=\Sigma_{mn}(T)\subset
A:=\{e_1,\,2e_1,\,\ldots,(mn - \ell n)e_1\},\ee and thus
\be\label{card-bound} |\Sigma_{mn}(T)|\leq mn - \ell n=|T|-mn+1.\ee
Note $\mathsf h (T) = \mathsf h (S'_2)\leq mn-2$, else the proof is
complete. Thus we can apply Theorem \ref{ham-result}, taking $k=3$,
whence it follows, in view of (\ref{card-bound}) and $0\notin
\Sigma_{mn}(T)$, that $|\supp(T)|\leq 2$.

We may assume $|\supp(T)|=2$, else $S$ will contain a term with
multiplicity $|T| = 2mn - \ell n-1\geq mn+n-1$, contradicting that
$S\in \A(G)$. Thus $T=(g_0e_1)^{n_1}((g_0+d)e_1)^{n_2}$ for some
$g_0,\,d\in\Z$ with $de_1\neq 0$. Since $(e_1,g_0e_1+e_2)$ is also a basis
for $G$, then, by redefining $e_2$ to be $g_0e_1+e_2$, we may
w.l.o.g. assume $g_0=0$. Thus
\be\label{sumset-for2}\Sigma_{mn}(T)=B:=(mn-n_1)de_1+\{0,de_1,\ldots,(mn
- \ell n-1)de_1\},\ee which is an arithmetic progression of
difference $de_1$ and length $mn-ln$ (in view of $0\notin \Sigma_{nm}(T)$). In view of (\ref{subset-for-S2}), we have $B=A$
with
$$2\leq n \leq |A|=mn - \ell n\leq mn-n\leq mn-2.$$ Thus $de_1=\pm e_1$ (as the difference of an arithmetic progression under the above assumptions is unique up to sign).
Consequently, (\ref{subset-for-S2}) and (\ref{sumset-for2}) imply
that $n_1=nm-1$ if $de_1=e_1$ (since $|S'|\leq 2nm-2$), and that
$n_1=mn - \ell n$ if $de_1=-e_1$ (since $|S'|<2mn - \ell n$).
However, in the former case, $e_2$ has the desired multiplicity in
$S$, while in the latter case, $n_2 = 2mn - \ell n - 1-n_1=mn-1$,
and thus $de_1+e_2=-e_1+e_2$ has the desired multiplicity,
completing the proof.
\end{proof}

\bigskip
\section{Proof of the corollary} \label{6}
\bigskip

Let \ $G = C_{n_1} \oplus C_{n_2}$, \ with $1 < n_1 \t n_2$, and
suppose that, for every prime divisor $p$ of $n_1$, the group \ $C_p
\oplus C_p$ has Property {\bf B}. The assertion  that $C_{n_1}
\oplus C_{n_1}$ has Property {\bf B} follows from the Theorem and
from the following two statements{\rm \,:}
\begin{enumerate}
\item[(a)] For every $n \in [2, 10]$, the group $C_n \oplus C_n$ has
           Property {\bf B}: for $n \le 6$ this may be found in
           \cite[Proposition 4.2]{Ga-Ge03b}; the cases $n \in \{8,9,10\}$ (and
           more) are settled in \cite{Bh-Ha-SP08c}.

\smallskip
\item[(b)] If $n \ge 6$ and $C_n \oplus C_n$ has Property {\bf B},
           then $C_{2n} \oplus C_{2n}$ has Property {\bf B} (see
           \cite[Theorem 8.1]{Ga-Ge03b}).
\end{enumerate}
Since \ $C_{n_1} \oplus C_{n_1}$ \ has Property {\bf B}, the
characterization of the minimal zero-sum sequences over $G$ of
length $\mathsf D (G)$ now follows from \cite[Theorem 3.3]{Sc08e}.\qedsymbol

\bigskip
\noindent {\bf Acknowledgments.}  This work was supported partially
by NSFC with grant no. 10671101 and by the 973 Project with grant no
9732006CB805904. It was further supported by the Austrian Science
Fund FWF (Project Number M1014-N13).


\bibliography{zerosum,fact,ger,hk,ideal}
\bibliographystyle{amsplain}
\end{document}